\newcommand{\masterseries}{\Omega}
\newcommand{\simplifiedmasterseries}{\Omega_{\mathrm{ev}}}
\newcommand{\simplifiedmasterseriestwo}{\Omega'_{\mathrm{ev}}}
\newcommand{\la}{\lambda}
\newcommand{\partialpairing}{\!-}
\newcommand{\gluing}{\mathrm{glue}}
\newcommand{\capping}{\mathrm{cap}}
\newcommand{\fsl}{\mathfrak sl}
\newcommand{\CC}{\mathcal{C}}
\newcommand{\CM}{\mathcal{M}}
\newcommand{\ins}{\mathrm{ins}}
\newcommand{\M}{\overline{\CM}}
\newcommand{\DD}{\mathcal{D}}
\newcommand{\OO}{\mathcal{O}}
\newcommand{\CF}{\mathcal{F}}
\newcommand{\CH}{\mathrm{CH}}
\newcommand{\CL}{\mathcal{L}}
\newcommand{\J}{\mathcal{J}}
\newcommand{\Poli}{\mathsf{Poli}}
\newcommand{\QQ}{\mathbb{Q}}
\newcommand{\too}{\rightarrow}
\newcommand{\Pic}{\mathrm{Pic}}
\newcommand{\D}{\qquad}
\newcommand{\N}{\mathbb{N}}
\newcommand{\id}{\mathrm{id}}
\newcommand{\rescaledpsi}{\phi}
\DeclareMathOperator{\pullbackmap}{pb}
\DeclareMathOperator{\ev}{ev}
\newcommand{\rescaledkappa}{\xi}
\newcommand{\sle}{\mathfrak{e}}
\newcommand{\slf}{\mathfrak{f}}
\newcommand{\slh}{\mathfrak{h}}
\newcommand{\lf}{\ell\!f}
\newtheorem{thm}[subsection]{Theorem}
\newtheorem{cor}[subsection]{Corollary}
\newtheorem{lem}[subsection]{Lemma}
\newtheorem{prop}[subsection]{Proposition}
\newtheorem{prop-def}[subsection]{Proposition-Definition}
\newtheorem{dfn}[subsection]{Definition}
\theoremstyle{remark}
\newtheorem{rem}[subsection]{Remark}
\newtheorem{notation}[subsection]{Notation}
\newtheorem{ex}[subsection]{Example}
\newtheorem{conj}[subsection]{Conjecture}
\newtheorem{con}[subsection]{Conventions}
\let\c@equation=\c@subsection
\begin{document}

\title[The Polishchuk differential operator via surfaces]
{An action of the Polishchuk differential operator via punctured surfaces}
\author[G.~C.~Drummond-Cole and M.~Tavakol]{Gabriel C. Drummond-Cole and Mehdi Tavakol}
   \address{Center for Geometry and Physics, Institute for Basic
  Science,  Pohang, Republic of Korea 37673 }
   \email{gabriel@ibs.re.kr}
 \address{School of mathematics and statistics, University of Melbourne, VIC 3010, Australia}
\email{mehdi.tavakol@unimelb.edu.au}

\begin{abstract}
For a family of Jacobians of smooth pointed curves there is a notion of tautological algebra.
There is an action of $\fsl_2$ on this algebra.
We define and study a lifting of the Polishchuk operator, corresponding to $\slf \in \fsl_2$, on an algebra consisting of punctured Riemann surfaces.
As an application we prove that a collection of tautological relations on moduli of curves, discovered by Faber and Zagier, come from a class of relations on the universal Jacobian. 
\end{abstract}   

\maketitle

\section{Introduction}

The study of algebraic cycles on moduli spaces of curves was initiated by Mumford in the influential article \cite{M}.
He developed intersection theory on such moduli spaces and defined the notion of \emph{tautological classes}.
These are the most natural algebraic cycles on the moduli space and many geometric constructions lead to tautological classes.
The collection of tautological cycles generate a distinguished subring of the Chow ring, known as the \emph{tautological ring}.
A fundamental open question concerning tautological rings is to understand the space of all relations among tautological classes.
The purpose of this note is to study the connection between two classes of tautological relations.
The first class of relations was discovered by Faber and Zagier around 2000 in an unpublished work.
The second class is based on the study of tautological classes on the universal Jacobian by Yin \cite{Y1}.
The method of Yin gives a powerful tool to produce a large class of tautological relations on moduli of curves.
Conjecturally, his method should give a complete description of tautological rings.
The main ingredient in his approach is the Polischuck differential operator $\DD$ which acts on the tautological ring of the universal Jacobian.
We will show that there is a natural lifting of this differential operator to an algebra built from punctured Riemann surfaces.
The lifting of the operator $\DD$ corresponds to gluing Riemann surfaces along the punctures and closing punctures with open discs.
Using this combinatorial interpretation of $\DD$ we are able to find closed formulas for a certain class of tautological relations from the Jacobian side.
We analyze these relations and show that they match with a class of Faber--Zagier relations.

\begin{con}
Throughout this note we consider algebraic cycles modulo rational equivalence.
Chow rings and cohomology rings are taken with $\QQ$-coefficients. 
\end{con}

\vspace{+10pt}
\noindent{\bf Acknowledgments.}
The first author was supported by the Institute for Basic Science under IBS-R003-D1.
The second author was supported by the Institute for Basic Science under IBS-R003-S1, by the Max Planck institute for mathematics, and by the Australian Research Council grant DP180103891.

\section{Tautological classes on the universal Jacobian}
The tautological ring of a fixed Jacobian variety under algebraic equivalence was defined and studied by Beauville \cite{B}. 
Tautological rings of families of Jacobian varieties under rational equivalence have been studied extensively since then.
For more details see the references \cite{MO, MP, PO1, PO2, PO3}.
Here we consider the relative version of this story under rational equivalence following Yin \cite{Y1} to which we refer the reader for precise definitions.
Let $\pi: \CC \too S$ be a family of smooth curves of genus $g\geq 2$ which admits a section $s: S \too \CC$. 
In this article we will assume that the base scheme $S$ is the universal curve $\CC_g=\CM_{g,1}$.
Consider the relative Picard group $\J_g=\Pic^0(\CC/S)$ of divisors of degree zero.
It is an abelian scheme of relative dimension $g$ over the base $S$.
The section $s$ induces an injection $\iota: \CC \too \J_g$ from $\CC$ into the universal Jacobian $\J_g$.
The geometric point $x$ on a curve $C$ is sent to the line bundle $\OO_C(x-s)$ via the morphism $\iota$. 
For an integer $k$ consider the associated endomorphism of $\J_g$ induced by multiplication with $k$ on fibers of the family $\J_g \to S$.

\begin{dfn}
For integers $i,j$ the subgroup $\CH^i_{(j)}(\J_g)$ of the Chow group $\CH^i(\J_g)$ is defined as all degree $i$ classes on which the morphism $k^*$ acts via multiplication with $k^{2i-j}$.
Equivalently, the action of the morphism $k_*$ on $\CH^i_{(j)}(\J_g)$ is multiplication by $k^{2g-2i+j}$. 
\end{dfn}

\begin{prop}
The Beauville decomposition of the Chow group of $\J_g$ has the form
$\CH^*(\J_g)=\bigoplus_{i,j} \CH_{(i,j)}(\J_g)$, where $\CH_{(i,j)}(\J_g) \coloneqq \CH_{(j)}^{\frac{i+j}{2}}(\J_g)$ for $i \equiv j$ mod 2.
\end{prop}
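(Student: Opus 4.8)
The plan is to deduce the statement from the Beauville--Deninger--Murre decomposition for abelian schemes; the rest is purely a matter of reindexing. First I would invoke the motivic/Fourier decomposition of an abelian scheme (Deninger--Murre, with refinements of K\"unnemann; see also \cite{MO, MP, PO1, PO2, PO3, Y1}): for the abelian scheme $\J_g \too S$ of relative dimension $g$ over $S=\CM_{g,1}$, the simultaneous eigenspace decomposition for the operators $k^*$ ($k\in\Z$) gives, in each codimension $p$, a \emph{finite} direct sum $\CH^p(\J_g)=\bigoplus_{j}\CH^p_{(j)}(\J_g)$, where $\CH^p_{(j)}(\J_g)$ is by definition the part on which every $k^*$ acts by multiplication by $k^{2p-j}$ (finiteness coming from the vanishing of $\CH^p_{(j)}$ outside a bounded range of $j$). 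Since $\CM_{g,1}$ is a smooth Deligne--Mumford stack one works with its rational Chow groups in the usual way, and the cited results apply. Summing over $p\ge 0$ yields $\CH^*(\J_g)=\bigoplus_{p,j}\CH^p_{(j)}(\J_g)$.

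Next I would check that the two descriptions in the definition above are compatible, so that the indexing is unambiguous: on $\CH^p_{(j)}(\J_g)$ the relation $k_*k^*=k^{2g}\cdot\id$, coming from $[k]$ being an isogeny of degree $k^{2g}$, together with the fact that $k_*$ preserves the decomposition (as $k_*$ commutes with every $l^*$ and preserves codimension), forces $k_*$ to act by $k^{2g-(2p-j)}=k^{2g-2p+j}$, as asserted. This step involves no real difficulty; it only pins down conventions.

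Finally I would carry out the reindexing. For a summand $\CH^p_{(j)}(\J_g)$ put $i\coloneqq 2p-j$, the exponent of $k$ in the $k^*$-action --- equivalently the cohomological degree, since the cycle class map sends $\CH^p_{(j)}$ into $H^{2p-j}$, on which $k^*$ acts by $k^{2p-j}$. Then $i\equiv -j\equiv j \pmod 2$ and $p=(i+j)/2$, so $(p,j)\mapsto(i,j)=(2p-j,j)$ is a bijection from $\{(p,j):p\ge 0\}$ onto $\{(i,j):i\equiv j \pmod 2\}$, with only finitely many indices contributing a nonzero summand. Under this bijection $\CH^p_{(j)}(\J_g)=\CH^{(i+j)/2}_{(j)}(\J_g)=\CH_{(i,j)}(\J_g)$, whence $\CH^*(\J_g)=\bigoplus_{i\equiv j\ (2)}\CH_{(i,j)}(\J_g)=\bigoplus_{i,j}\CH_{(i,j)}(\J_g)$, the final sum being over all pairs $i,j$ under the convention $\CH_{(i,j)}\coloneqq 0$ when $i\not\equiv j\pmod 2$. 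The sole genuine input is the existence and finiteness of the Beauville decomposition; the expected ``obstacle'' is merely to cite it in the form valid for an abelian scheme over this (stacky) base and to keep the two eigenvalue normalizations straight --- there is no hard step.
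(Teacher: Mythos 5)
Your proof is correct and follows essentially the same route the paper takes implicitly: the paper states this proposition without proof, relying on the Beauville--Deninger--Murre eigenspace decomposition for abelian schemes as developed in \cite{Y1} and the references cited there, and the content of the proposition is exactly the reindexing $(p,j)\mapsto(2p-j,\,j)$ that you carry out. Your verification of the $k_*$ normalization via $k_*k^*=k^{2g}$ and your remark on finiteness of the range of $j$ are the right supporting details.
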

In the following the Chow group of the universal Jacobian equipped with the intersection product is denoted by $(\CH^*(\J_g), .)$ and we usually drop the product sign to simplify the notation.
But there is another product on the Chow group of $\J_g$:

\begin{dfn}
The \emph{Pontryagin product} $x * y$ of two algebraic cycles $x,y \in \CH^*(\J_g)$ is defined as $\mu_*(\pi_1^* x \cdot \pi_2^* y)$, where $\pi_1,\pi_2: \J_g \times_S \J_g \too \J_g$ are the natural projections and $\mu: \J_g \times_S \J_g \too \J_g$ is the addition morphism.
\end{dfn}
Recall that to an abelian scheme $A/S$ there is an associated Poincar\'e line bundle $\mathcal{P}$ on $A \times_S A^t$ trivialized along the zero sections.
Here, $A^t$ denotes the dual abelian scheme $\Pic^0_{A/S}$.

\begin{dfn}
A \emph{polarization} of $A/S$ is a symmetric isogeny $\la: A \to A^t$ such that the pullback of the Poincar\'e bundle via the morphism $(\id_A, \la): A \to A \times_S A^t$ is relatively ample over $S$. We say that $\la$ is a \emph{principal polarization} when $\la$ defines an isomorphism. 
\end{dfn}
Any such polarization induces a line bundle $\CL_{\la}$ in the rational Picard group of $A$ with the following properties:
\begin{itemize}
\item
The line bundle $\CL_{\la}$ is relatively ample over $S$,
\item
It is symmetric, i.e. $[-1]^* \CL_{\la}=\CL_{\la}$,
\item
It is trivialized along the zero section.
\end{itemize}
The first Chern class of $\CL_{\la}$ is called \emph{the universal theta divisor} and it will be denoted by $\theta$.
For more details we refer the reader to \cite[Chapter 2]{Y1}.
Let $\ell$ be the first Chern class of the Poincar\'e bundle.

\begin{dfn}
The Fourier--Mukai transform $\CF$ is defined as \[
\CF(x)=\pi_{2,*}(\pi_1^* x \cdot \exp(\ell)).
\]
It gives an isomorphism between $(\CH^* (\J_g),.)$ and $(\CH^*(\J_g),*)$.
 
\end{dfn}
We now recall the definition of the tautological ring of $\J_g$:

\begin{dfn}
The tautological ring $R^*(\J_g)$ of $\J_g$ is defined as the smallest $\QQ$-subalgebra of the rational Chow ring $\CH^*(\J_g)$ which contains the class of $\iota_*[\CC]$ and is stable under the Fourier--Mukai transform and all maps $k^*$ for integers $k$. 
\end{dfn}

\begin{rem}
It follows that for an integer $k$ the tautological algebra becomes stable under $k_*$ as well.
\end{rem}

The generators of $R^*(\J_g)$ are expressed in terms of the components of the curve class in the Beauville decomposition.
Define the following classes:
\[
p_{i,j} \coloneqq \CF \left(\theta^{\frac{j-i+2}{2}} \cdot \iota_* [\CC]_{(j)} \right) \in \CH_{(i,j)}(\J_g).
\]
We have that $p_{2,0}=-\theta$ and $p_{0,0}=g[\J_g]$. 
The class $p_{i,j}$ vanishes for $i<0$ or $j<0$ or $j>2g-2$.
The tautological class $\psi$ is defined as $\psi \coloneqq s^*(K)$, where $K$ is the first Chern class of the relative dualizing sheaf of the morphism $\CC \to S$.
The pullback of $\psi$ via the natural map $\J_g \too S$ is denoted by the same letter. 
The following fact is proved in \cite[Theorem 3.6]{Y1}:

\begin{thm}
\label{thm: taut Jg generation}
The tautological ring of $\J_g$ is generated by the classes $\{p_{i,j}\}$ and $\psi$.
In particular, it is finitely generated. 
\end{thm}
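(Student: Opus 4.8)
The strategy is to identify $R^*(\J_g)$ with the $\QQ$-subalgebra $R'\subseteq\CH^*(\J_g)$ generated by the classes $p_{i,j}$ and $\psi$. Since $R^*(\J_g)$ is by definition the smallest subalgebra containing $\iota_*[\CC]$ and stable under $\CF$ and all the $k^*$, it suffices to prove two things: (a) every $p_{i,j}$ and $\psi$ lies in $R^*(\J_g)$, giving $R'\subseteq R^*(\J_g)$; and (b) $R'$ contains $\iota_*[\CC]$ and is stable under $\CF$ and all the $k^*$, giving $R^*(\J_g)\subseteq R'$. Finite generation then follows at once, since only finitely many of the classes $p_{i,j}$ are nonzero: $p_{i,j}$ vanishes unless $0\le j\le 2g-2$ and $(i+j)/2\le\dim\J_g$.

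For (a), I would first record that $k^*$ acts on the Beauville piece $\CH^{g-1}_{(j)}(\J_g)$ by the scalar $k^{2(g-1)-j}$, and that for $j\ne j'$ these scalars coincide only for finitely many $k$; applying $k^*$ for enough values of $k$ and inverting the resulting Vandermonde system extracts each Beauville component $\iota_*[\CC]_{(j)}$ of the tautological class $\iota_*[\CC]$, so $\iota_*[\CC]_{(j)}\in R^*(\J_g)$. In particular $\theta=-p_{2,0}=-\CF(\iota_*[\CC]_{(0)})\in R^*(\J_g)$, using the identity $p_{2,0}=-\theta$ recorded above. Hence each product $\theta^{(j-i+2)/2}\cdot\iota_*[\CC]_{(j)}$ lies in $R^*(\J_g)$, and applying $\CF$ gives $p_{i,j}\in R^*(\J_g)$. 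The remaining generator $\psi=s^*(K)$ is the subtle one, since it is not visibly assembled from $\iota_*[\CC]$ by the permitted operations; to exhibit it as a tautological class one needs a genuinely geometric input, for instance the excess-intersection computation of $\CC$ inside $\J_g$, whose normal bundle has Chern classes built from $K=c_1(\omega_{\CC/S})$, together with the known restrictions of the $p_{i,j}$ along $\iota$.

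For (b), stability under the $k^*$ is immediate: each $p_{i,j}$ lies in the single Beauville component $\CH_{(i,j)}(\J_g)$, on which $k^*$ acts by the scalar $k^i$, while $k^*\psi=\psi$ since $\psi$ is pulled back from $S$; hence $k^*$ multiplies any monomial in the generators by a scalar. Granting $\CF$-stability, one also gets $\iota_*[\CC]\in R'$, because $\iota_*[\CC]=\sum_j\iota_*[\CC]_{(j)}=\sum_j\CF^{-1}(p_{j+2,j})$, where $\CF^{-1}$ equals, up to sign, $(-1)^*\circ\CF$, and both $\CF$ and $(-1)^*$ preserve $R'$. So everything reduces to showing $\CF(R')\subseteq R'$. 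The plan here is to bring in the $\fsl_2$-action on tautological classes of an abelian scheme, due to Polishchuk for a fixed abelian variety and adapted to a family of Jacobians in \cite{Y1}: multiplication by $\theta$ realizes (up to sign) the operator $\sle$, the Polishchuk differential operator $\DD$ realizes $\slf$, a grading operator realizes $\slh$, and $\CF$ lies in the group generated by the exponentials of these. Multiplication by $\theta\in R'$ visibly preserves $R'$, and the grading operator acts by scalars on graded pieces, so the heart of the matter is that $\DD$ — though defined a priori on all of $\CH^*(\J_g)$ via $\CF$ and $\theta$ — acts on $R'$ as an explicit differential operator, polynomial in the $\partial/\partial p_{i,j}$ with coefficients polynomial in the $p_{i,j}$ and $\psi$, and hence preserves $R'$. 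Once this is in hand, the whole $\fsl_2$, and in particular $\CF$, preserves $R'$, completing (b).

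The principal obstacle is precisely this last point — producing the explicit formula for the Polishchuk operator $\DD$ on the polynomial algebra $R'$, which is what forces $\CF$-stability and constitutes the technical core — together with the subsidiary but still somewhat delicate verification that $\psi$ lies in $R^*(\J_g)$.
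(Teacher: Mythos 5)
The paper offers no proof of this statement: it is quoted verbatim from Yin and attributed to \cite[Theorem 3.6]{Y1}, so there is no internal argument to compare against. Your outline is, in essence, a faithful reconstruction of the skeleton of Yin's proof, and the reductions you do carry out are correct: extracting the Beauville components $\iota_*[\CC]_{(j)}$ by a Vandermonde argument on the $k^*$-eigenvalues, obtaining $\theta=-p_{2,0}$ and hence all $p_{i,j}$ inside $R^*(\J_g)$, the computation that $k^*$ acts on $\CH_{(i,j)}(\J_g)$ by $k^i$, the recovery of $\iota_*[\CC]$ from the $p_{j+2,j}$ via $\CF^{-1}=(-1)^g\,(-1)^*\circ\CF$, and the reduction of $\CF$-stability of $R'$ to the $\fsl_2$-action via $\CF=\exp(\sle)\exp(-\slf)\exp(\sle)$ (up to normalization).

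That said, the two points you defer are not subsidiary; they \emph{are} the theorem. The assertion that $\slf$ restricts to the displayed differential operator $\DD$ on the polynomial algebra in the $p_{i,j}$ and $\psi$ requires Polishchuk's computation of the failure of $\slf$ to be a derivation for the intersection product, i.e.\ an explicit formula for $\slf(x\cdot y)-x\cdot\slf(y)-\slf(x)\cdot y$ in terms of the addition map and the diagonal; it is exactly in this correction term, via $\mu^*\theta-\pi_1^*\theta-\pi_2^*\theta$ restricted to $\CC\times_S\CC$, that the class $\psi$ first appears. Two consequences for your write-up. First, your suggested route to $\psi\in R^*(\J_g)$ through an excess-intersection computation of $\iota^*\iota_*[\CC]$ is a detour: the cleaner (and standard) argument is to apply $\slf$ to $p_{1,1}^2$, whose second-order term yields $g^2\psi-p_{0,2}$ up to first-order corrections, so that $\psi$ lies in $R^*(\J_g)$ as soon as the formula for $\DD$ is established. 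Second, you should be careful that the logic is not circular: the formula for $\DD$ must be proved as an identity of operators on $\CH^*(\J_g)$ applied to arbitrary polynomials in the $p_{i,j}$ and $\psi$ (which Polishchuk's Leibniz-defect formula supplies), and only then can it be used both to place $\psi$ in $R^*(\J_g)$ and to conclude that $R'$ is $\slf$-stable, hence $\CF$-stable. With those two inputs supplied, your argument closes; without them it is an accurate roadmap rather than a proof.
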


\subsection{Lefschetz decomposition of Chow groups}
\label{subsec: Lefschetz decomp}
The action of the Lie algebra $\fsl_2$ on the Chow groups of a fixed abelian variety was studied by K{\"u}nnemann~\cite{KU}.
Polishchuk \cite{PO1} studied the $\fsl_2$ action for abelian schemes which works over families. 
We follow the standard convention that $\fsl_2$ is generated by elements $\sle,\slf,\slh$ satisfying: 
\[
[\sle,\slf]=\slh, \D [\slh,\sle]=2\sle, \D [\slh,\slf]=-2\slf.
\]
With this notation the action of $\fsl_2$ on Chow groups of $\J_g$ is given by
\[\sle: \CH_{(j)}^i(\J_g) \too \CH_{(j)}^{i+1}(\J_g) \D x \too -\theta \cdot x,\]
\[\slf: \CH_{(j)}^i(\J_g) \too \CH_{(j)}^{i-1}(\J_g) \D x \too -\frac{\theta^{g-1}}{(g-1)!} * x,\]
\[\slh: \CH_{(j)}^i(\J_g) \too \CH_{(j)}^i(\J_g) \D x \too -(2i-j-g) x,\]
The operator $\slf$ restricted to the tautological ring of $\J_g$ is given by the following differential operator:
\begin{align*}
\nonumber\mathcal{D}&=\frac{1}{2} \sum_{i,j,k,l} \left( \psi p_{i-1,j-1}p_{k-1,l-1}- \binom{i+k-2}{i-1}p_{i+k-2,j+l} \right) \partial_{p_{i,j}} \partial_{p_{k,l}}
\\&\quad{}+\sum_{i,j} p_{i-2,j}\partial_{p_{i,j}}.
\end{align*}
\section{Faber--Zagier relations}
\label{section: FZ relations}

Let $g \geq 2$ and consider the moduli space $\CM_g$ of smooth curves of genus $g$. 
Consider the universal curve $\pi: \CC_g \too \CM_g$ and denote by $\omega_{\pi}$ its relative dualizing sheaf. 
The first Chern class of $\omega_\pi$ is denoted by $K$.
In \cite{M} Mumford defined \emph{the kappa class} $\kappa_i$ as the push-forward $\pi_*(K^{i+1})$.
It is an algebraic cycle of degree $i$. Notice that $\kappa_0=2g-2$.

\begin{dfn}
The tautological ring $R^*(\CM_g)$ of $\CM_g$ is defined as the $\QQ$-subalgebra of the rational Chow ring $\CH^*(\CM_g)$ of $\CM_g$ generated by kappa classes.
\end{dfn}
In unpublished work Faber and Zagier studied the Gorenstein quotient of the tautological ring of $\CM_g$.
Recall that there is an isomorphism \[
\Phi: R^{g-2}(\CM_g) \cong \QQ.
\]
This follows from a result of Looijenga \cite{L} which states that $R^{g-2}(\CM_g)$ is at most one-dimensional and from the result of Faber \cite{F2} which shows that $\kappa_{g-2}$ is nonzero.
There is a natural way to extend $\Phi$ to a group homomorphism
 \[
\Phi: R^*(\CM_g) \too \QQ,
\]
by requiring that any element is sent to zero unless it is of degree $g-2$.
Each element $x$ of the tautological ring $R^*(\CM_g)$ defines a linear map \[
\Phi_x: R^*(\CM_g) \too \QQ
\] that sends an element $y \in R^*(\CM_g)$ to $\Phi(x \cdot y) \in \QQ$.

\begin{dfn}
The Gorenstein quotient of the ring $R^*(\CM_g)$, denoted $G^*(\CM_g)$, is the quotient of $R^*(\CM_g)$ by the ideal generated by all elements $x$ for which $\Phi_x$ defines the zero map.
\end{dfn}
Let \[
\mathbf{p}=\{p_1, p_3,p_4,p_6,p_7,p_9,p_{10}, \dots \}
\]
be a variable set indexed by the positive integers not congruent to 2 mod 3. 
The formal power series $\Psi$ is defined by the formula:
\[
\Psi(t, \mathbf{p})=\bigg(1+t p_3+t^2 p_6 + t^3 p_9 + \dots \bigg) \cdot \sum_{n=0} ^{\infty} \frac{(6n)!}{(3n)!(2n)!} t^n
\]
\[
\qquad \qquad + \bigg(p_1+t p_4 + t^2 p_7 + \dots \bigg) \cdot \sum_{n=0} ^{\infty} \frac{(6n)!}{(3n)!(2n)!} \frac{6n+1}{6n-1} t^n.
\]
Let $\sigma$ be a partition of $|\sigma|$ with parts not congruent to 2 modulo 3. 
For such partitions define rational numbers $\alpha_n(\sigma)$ as follows: 
\[
\log(\Psi(t,\mathbf{p}))= \sum_{\sigma} \sum_{n=0}^{\infty} \alpha_n(\sigma) t^n \mathbf{p}^{\sigma},
\]
where for $\sigma$ the partition 
$[1^{a_1} 3^{a_3} 4^{a_4} \dots]$, we use $\mathbf{p}^{\sigma}$ to denote the monomial 
$(p_1^{a_1} p_3^{a_3} p_4^{a_4} \dots)$. 
Define \[
\gamma \coloneqq \sum_{\sigma} \sum_{n=0} ^{\infty} \alpha_n(\sigma) \kappa_n t^n \mathbf{p}^{\sigma};
\] 
then the relation 

\begin{equation*}
\label{fz}
[\exp(-\gamma)]_{t^n \mathbf{p}^{\sigma}}=0
\end{equation*}
holds in the Gorenstein quotient $G^*(\CM_g)$ of $R^*(\CM_g)$ when 
$g-1+|\sigma| <3n $ and $g \equiv n+|\sigma|+1$ (mod 2). 
In 2013 Pandharipande and Pixton \cite{PP} proved that Faber--Zagier relations hold in the tautological ring of $\CM_g$.
It is an open question whether all relations in the tautological ring follow from Faber--Zagier relations.

\subsection{Relations on moduli of curves from the universal Jacobian}
The differential operator $\mathcal{D}$ provides a powerful tool to produce tautological relations. 
The crucial property of $\mathcal{D}$ is that it preserves the rational equivalence of algebraic cycles.
Therefore, if we start from a collection of tautological relations we can produce a larger class by applying the differential operator $\mathcal{D}$.
These relations can be used to produce tautological relations on moduli of curves.
Assume that the base scheme $S$ is the universal curve $\CC_g=\CM_{g,1}$ as before.

\begin{dfn}
The tautological ring $R^*(S)$ of $S$ is defined to be the $\QQ$-subalgebra of the Chow ring $\CH^*(S)$ generated by kappa classes and the class of the relative dualizing sheaf $\omega_\pi$ of $\pi: S \to \CM_g$.
\end{dfn}

\begin{rem}\label{jacobian_curve}
Consider the natural morphism $\pi: \J_g \to S$.
According to \cite[Corollary 3.8]{Y1} the pullback homomorphism $\pi^*$ identifies $R^*(S)$ with the subspace $\bigoplus_{i=0}^{3g-2} R^*_{(0,2i)}(\J_g)$ of $R^*(\J_g)$.
Therefore, we also obtain relations on the universal curve $\CC_g$ using the method explained above.
These relations can be pushed down to $\CM_g$ via the canonical map $\CC_g \to \CM_g$ to give relations in $R^*(\CM_g)$ as well.
All tautological relations on $\CC_g$ for $g \leq 19$ and on $\CM_g$ for $g \leq 23$ can be recovered using this method.
\end{rem}

\begin{dfn}
\label{defi: Faber Zagier brace operation}
Let $\vec{\jmath}=(j_0, j_1,\ldots)$ be an ordered set of indeterminates. Then the $\mathbb{Q}$-linear bracket operation on power series is
\[
\{x^n\}_{\vec{\jmath}} \coloneqq x^n j_n
\]
\end{dfn}
\begin{ex}
\label{example Faber Zagier A series}
Let $A(z)$ be the power series 
\[A(z)=\sum_{n=0}^\infty \frac{(6n)!}{(3n)!(2n)!}\left(\frac{z}{72}\right)^n.\]
The \emph{top Faber--Zagier relation of genus $3k-1$ for $\vec{\kappa}$} is the relation
\begin{equation}
\label{eq: FZ relation}
\left[\exp\left(
-\{\log(A)\}_{\vec{\kappa}}
\right)
\right]_{z^k}=0
\end{equation}
This relation corresponds to the empty partition in the previous section.
\end{ex}
\begin{rem}
\label{remark: rescaling FZ doesn't matter}
 Replacing $z$ with $\alpha z$ in the definition of $A$ merely multiplies the expression on the right side of Equation~\eqref{eq: FZ relation} by $\alpha^k$, so the choice of normalization factor $72$ is unimportant.
\end{rem}
\begin{thm}\label{thm:main}
Let $g=3k-1$.
The vanishing of the tautological class $p_{3,1}^{2k}$ on the universal Jacobian $\J_g$ gives the top Faber--Zagier relation.
\end{thm}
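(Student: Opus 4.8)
The plan is to propagate the relation $p_{3,1}^{2k}=0$ from the universal Jacobian $\J_g$ down to $\CM_g$ using the Polishchuk operator $\DD$, and then to identify the outcome — by means of the combinatorial surface model — with the top Faber--Zagier relation of Example~\ref{example Faber Zagier A series}. The starting relation itself is elementary: since $p_{3,1}\in\CH^{2}_{(1)}(\J_g)$ and the Beauville bigrading is multiplicative, $p_{3,1}^{2k}$ lies in $\CH^{4k}_{(2k)}(\J_g)=\CH_{(6k,2k)}(\J_g)$, and for $g=3k-1$ this group vanishes by the Beauville bounds (equivalently, under the Fourier--Mukai transform it becomes the $2k$-th Pontryagin power of $\iota_{*}[\CC]_{(1)}$, a class in $\CH^{k-1}_{(2k)}(\J_g)$, which is zero because $2k>k-1$). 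Because $\DD$ — the restriction to $R^{*}(\J_g)$ of the operator $\slf\in\fsl_2$ — preserves rational equivalence, $\DD^{m}\big(p_{3,1}^{2k}\big)=0$ in $R^{*}(\J_g)$ for every $m\ge 0$.

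The operator $\slf$ lowers the Chow degree by one while fixing the second Beauville index, hence in the notation $\CH_{(i,j)}$ it lowers the first index by two; so $\DD^{m}$ maps $\CH_{(6k,2k)}(\J_g)$ into $\CH_{(6k-2m,2k)}(\J_g)$, and for $m=3k$ we land in $\CH_{(0,2k)}(\J_g)=\CH^{k}_{(2k)}(\J_g)$. By Remark~\ref{jacobian_curve} this is the image under $\pi^{*}$ of the codimension-$k$ part of $R^{*}(S)$, so $\DD^{3k}\big(p_{3,1}^{2k}\big)=0$ is a tautological relation of codimension $k$ on $\CC_g=\CM_{g,1}$; via the standard passage from tautological relations on $\CM_{g,1}$ to relations on $\CM_g$ (for instance pushing forward $\psi\cdot(-)$, which leaves the codimension equal to $k$), this produces a codimension-$k$ relation on $\CM_{3k-1}$.

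It remains to compute $\DD^{3k}\big(p_{3,1}^{2k}\big)$ explicitly, and this is where the combinatorial interpretation of $\DD$ is indispensable. Under the surface dictionary $p_{3,1}$ corresponds to a fixed elementary punctured surface, the factor $\psi$ in the formula for $\DD$ records the capping of a puncture by a disc, and the two terms of $\DD$ correspond respectively to gluing two punctures together (or splitting one off with a $\psi$-decoration) and to an internal modification. Iterating $\DD$ on the $2k$ copies of $p_{3,1}$ then produces a weighted sum over gluing-and-capping patterns, the weights being assembled from the binomial coefficients and the factors $\tfrac1{(g-1)!}$ appearing in $\slf$; rewriting the result in terms of $\psi$ and of the kappa classes via the fibre integral $\pi_{*}\psi^{a+1}=\kappa_{a}$, one shows that the generating function collecting these weighted counts is, up to a rescaling of the formal variable, the series $A(z)=\sum_{n}\tfrac{(6n)!}{(3n)!(2n)!}(z/72)^{n}$ of Example~\ref{example Faber Zagier A series} (the rescaling being harmless by Remark~\ref{remark: rescaling FZ doesn't matter}). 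Because applying a second-order differential operator repeatedly to a power organises into connected and disconnected contributions, the disconnected patterns exponentiate the connected ones, and the codimension-$k$ part of $\DD^{3k}\big(p_{3,1}^{2k}\big)$ comes out as a nonzero rational multiple of $\big[\exp\big(-\{\log A\}_{\vec\kappa}\big)\big]_{z^{k}}$; since the left side is zero, this is precisely the top Faber--Zagier relation~\eqref{eq: FZ relation}.

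The main obstacle is this last identification: matching the weighted count of gluing-and-capping patterns with the hypergeometric coefficients $\tfrac{(6n)!}{(3n)!(2n)!}$ with all signs and normalisations correct, and checking that the translation of classes on $\J_g$ into kappa polynomials is compatible with the $\log$--$\exp$ bookkeeping that turns the connected generating function into $\exp(-\{\log A\}_{\vec\kappa})$. One must also verify that iterating $\DD$ exactly $3k$ times does not annihilate the class already inside the surface algebra (before the Jacobian relations are imposed), and that the resulting proportionality constant is genuinely nonzero, so that the relation produced is the Faber--Zagier relation rather than the trivial identity.
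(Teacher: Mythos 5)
Your proposal reproduces the paper's strategy at the level of a roadmap --- start from the vanishing of $p_{3,1}^{2k}$, apply the Polishchuk operator $3k$ times to land in $\CH_{(0,2k)}(\J_g)\cong R^k(\CC_g)$, and identify the output with the top Faber--Zagier relation via the surface/gluing interpretation of $\DD$ --- but the two steps that constitute essentially all of the mathematical content are asserted rather than proved, and you acknowledge as much in your final paragraph. First, the claim that the weighted count of connected gluing patterns produces the hypergeometric coefficients $\frac{(6n)!}{(3n)!(2n)!}$ of the series $A$ is not something one ``shows'' by inspection: the paper obtains it indirectly, by first evaluating the \emph{positive} gluing operator $\partial_+^{3k}(q_{3,1}^{2k})$ at $q_{0,n}=1$ (where the answer is the count $(6k-1)!!\,(3k)!=(6k)!/8^k$ of ordered perfect pairings of the $6k$ boundary components, Corollary~\ref{cor: count of trees}), then using the exponential compositional formula (Lemma~\ref{lem: D as exp-type sum}) to solve for the connected coefficients $\beta_{2n}$, and only then transferring to the negative operator $\partial_-$ by a sign bookkeeping (Corollary~\ref{cor:fz1}). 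Without some such argument your generating-function identity is an unproven assertion, and it is exactly where the nonvanishing of the proportionality constant lives.

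Second, and more seriously, your sketch does not engage with the $\psi$-classes. The Polishchuk operator contains the capping term $\partial_\psi$ (the summand $\psi\, p_{i-1,j-1}p_{k-1,l-1}\partial_{p_{i,j}}\partial_{p_{k,l}}$), so $\DD^{3k}(p_{3,1}^{2k})$, written in $R^k(\CC_g)$, is a priori a polynomial in the $\kappa_i$ \emph{and} $\psi$. For the resulting relation to be the pullback of the top Faber--Zagier relation from $\CM_g$ --- which is what makes the final multiply-by-$K$-and-push-down step harmless --- one must show that every contribution carrying a positive power of $\psi$ cancels. This is the content of all of Section~\ref{section: relating c and psi}, together with the core/insertion-forest decomposition of trivalent graphs (Appendix~\ref{app:trees}) and the evaluation of the master series (Lemma~\ref{lem: evaluation of master series}, proved by a diagonal-extraction and ODE argument in Appendix~\ref{appendix: key lemma proof}). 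Alternatively one could push down the $\psi$-dependent relation directly via $\pi_*(K\cdot\psi^a\cdot P(\kappa))=\kappa_a P(\kappa)$, as your sketch hints, but then the combinatorics to be matched against $\exp(-\{\log A\}_{\vec\kappa})$ changes and you give no computation for it. As it stands the proposal is a correct outline of the same approach, with the decisive computations --- precisely the ones you list as ``the main obstacle'' --- left open.
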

\begin{rem}
The precise statement of Theorem \ref{thm:main} is that by Remark \ref{jacobian_curve} the vanishing of $p_{3,1}^{2k}$ on $\J_g$ gives a relation on the universal curve $\CC_g$ over $\CM_g$.
We obtain a relation on $\CM_g$ by multiplying with $K$ and pushing down. 
In general this would involve a further layer of complication. 
But in our case, we will show that our relation on $\CC_g$ is in fact the \emph{pullback} of the top Faber--Zagier relation to $\CC_g$ via the projection $\CC_g \to \CM_g$.
This implies that the multiplication and pushdown procedure again yields the top Faber--Zagier relation.
\end{rem}

The remainder of the paper will be devoted to the proof of this theorem.
We will begin, in Section~\ref{section: lift}, by giving an interpretation of the Polishchuk differential operator and several related operators on $R^*(\J_g)$ and related rings in terms of punctured surfaces.
The Polishchuk operator itself is difficult to analyze directly, but some of the related operators are more amenable to enumerative combinatorics.
One such modified operator will be shown to yield the top Faber--Zagier relation.
Then in Section~\ref{section: relating c and psi}, we will show that in the case of interest, the output of the modified operator in fact coincides with the output of the Polishchuk operator.
As a roadmap, we have the following schematic chain of proportionalities and equalities:
\begin{align*}
\parbox{2.5cm}{\raggedright left hand side of~\eqref{eq: FZ relation}} &\propto \partial_-^{3k}(q_{3,1}^{2k})&&\text{Corollary~\ref{cor:fz1}}
\\&=\widehat{\pullbackmap}\circ \partial^{3k}_{c-}(q^{2k}_{3,1})|_{\rescaledkappa_0=6k-4}&&\parbox{3cm}{\raggedright Section~\ref{sec: proof of main thm} via Lemma~\ref{lem: evaluation of master series}}
\\[.75em]
&=\pullbackmap\circ \partial^{3k}_{\Poli}(q^{2k}_{3,1})|_{\rescaledkappa_0=6k-4}
&&\text{Lemma~\ref{lemma: poli versus c-}}
\\[.75em]
&\propto\parbox{4cm}{pushforward to $\mathcal{M}_g$ of the image of $p^{2k}_{3,1}$ under the Polishchuck operator}&&\parbox{3cm}{\raggedright Remarks~\ref{rem: relation to geometric operator} and~\ref{remark: relation to Yin's pullback}}
\end{align*}
which together assemble to the proof of Theorem~\ref{thm:main}.

\begin{rem}
The conclusion of Theorem~\ref{thm:main} is not itself surprising. We know from~\cite{Y1} that this method should give a relation in the tautological ring of $\mathcal{M}_g$.
According to \cite{F2} it is expected that the space of degree $k$ relations in the tautological ring of $\CM_g$ when $g=3k-1$ should be one dimensional. 
Therefore, we expect to get the unique known relation up to a scalar multiple, and in that sense the content is that the scalar multiple is not zero. Our emphasis instead is that the method of proof is new---this is a proof of concept that Yin's theory can be used effectively to extract concrete relations.
\end{rem}

\section{Lifting of the Polishchuk differential operator using punctured surfaces}
\label{section: lift}
In this section we study several polynomial algebras related to the tautological ring of the universal Jacobian $R^*(\J_g)$. 

\begin{notation}
We use the notation ${\Lambda_q}$ for the polynomial ring $\QQ[q_{i,j},\rescaledpsi]$ on $\rescaledpsi$ and variables $q_{i,j}$ with $i\equiv j\pmod 2$ and $i,j\ge 0$.
We use the notation $\widehat{{\Lambda_q}}$ for the further extension of $\Lambda_q$ by two more variables: $\widehat{\Lambda_q}={\Lambda_q}[q_{0,-2},q_{1,-1}]$.
\end{notation}
\begin{rem}
\label{remark: generators are surfaces}
We think of the generator $q_{i,j}$ as representing an orientable surface of Euler characteristic $-j$ with $i$ boundary components, and passing to the extended ring corresponds to allowing generators for the disk and the sphere as well as all other orientable surfaces of finite type.
\end{rem}
The evident inclusion and projection are maps of rings between ${\Lambda_q}$ and $\widehat{{\Lambda_q}}$. 
There is a projection $\pi$ from ${\Lambda_q}$ to $R^*(\J_g)$ defined by
\begin{align}
q_{i,j}&\mapsto \frac{i!}{2^{\frac{i+j-2}{2}}}p_{i,j}
&
\rescaledpsi&\mapsto \frac{\psi}{4}
\label{eq: change of basis}
\end{align}
(killing generators that are out of range).
The projection $\pi$ realizes $R^*(\J_g)$ as a quotient of ${\Lambda_q}$ or $\widehat{{\Lambda_q}}$.
The following differential operators will be our main players.
\begin{dfn} 
We define the following differential operators on ${\Lambda_q}$:
\label{definition: differential operators}
\begin{itemize}
\item the \emph{one-component gluing operator}
\begin{equation*}
\partial_1 \coloneqq\sum_{i,j}\binom{i}{2}q_{i-2,j}\partial_{q_{i,j}},
\end{equation*}
\item the \emph{two-component gluing operator}
\begin{equation*}
\partial_2 \coloneqq\sum_{i,j,k,l}\frac{1}{2}ik q_{i+k-2,j+l}\partial_{q_{i,j}}\partial_{q_{k,l}},
\end{equation*}
\item the \emph{one-component capping operator}
\begin{equation*}
\partial'_\psi \coloneqq\sum_{i,j}\binom{i}{2}\rescaledpsi q_{i-2,j-2}\partial_{q_{i,j}},
\end{equation*}
and
\item the \emph{two-component capping operator}
\begin{equation*}
\partial_\psi\coloneqq 	\sum_{i,j,k,l}\frac{1}{2}ik \rescaledpsi q_{i-1,j-1}q_{k-1,l-1}\partial_{q_{i,j}}\partial_{q_{k,l}}.
\end{equation*}
\end{itemize}
We also give names and notation for linear combinations of these atomic operators:
\begin{align*}
	\partial_{\Poli}&\coloneqq \partial_1 - \partial_2+\partial_\psi && \text{\emph{Polishchuk operator}}
	\\
	\partial_\pm&\coloneqq \partial_1 \pm\partial_2&&\text{\emph{gluing operators}}
	\\
	\partial_{c\pm}&\coloneqq \partial_1 \pm \partial_2 + \partial_\psi+\partial'_\psi &&\text{\emph{surface operators}}
\end{align*}
We call the gluing and surface operators \emph{positive} or \emph{negative} according to the sign of $\partial_2$.

Finally, we use the same formulas, implicitly extending the indexing of the summations for operators acting on the ring $\widehat{{\Lambda_q}}$, adding the word ``extended'' to the terminology and a hat to the notation.
\end{dfn}
\begin{rem}
\label{rem: relation to geometric operator}
Change of basis reveals that the Polishchuk operator $\partial_{\Poli}$ passes to the quotient $R^*(\J_g)$ as the Polishchuk differential operator $\mathcal{D}$ (see~Section~\ref{subsec: Lefschetz decomp}).
\end{rem}
\begin{rem}[Warning]
The differential operators which act on ${\Lambda_q}$ in Definition~\ref{definition: differential operators} are not naively compatible with their extended versions acting on $\widehat{{\Lambda_q}}$ under inclusion and projection between ${\Lambda_q}$ and $\widehat{{\Lambda_q}}$. 
\end{rem}
While the operators are not naively compatible, there is a compatibility relation related to some further quotients.
\begin{dfn}
We use the notation ${\Lambda_\rescaledkappa}$ for the ring $\mathbb{Q}[\rescaledkappa_i,\rescaledpsi]$ where $i$ varies over non-negative integers.
We use the notation $\widehat{{\Lambda_\rescaledkappa}}$ for the ring $\mathbb{Q}[\rescaledkappa_i,\rescaledpsi]$, where $i\ge -1$. 
The evaluation from $\widehat{{\Lambda_\rescaledkappa}}$ and ${\Lambda_\rescaledkappa}$ to $R^*(\CC_g)$ projects $\rescaledpsi$ to $\frac{\psi}{4}$, $\rescaledkappa_i$ to $\frac{\kappa_i}{4^i}$, and $\rescaledkappa_{-1}$ to $0$.

The \emph{inverse pullback map} $\pullbackmap:{\Lambda_q}\to {\Lambda_\rescaledkappa}$ is the $\QQ$-linear map
\begin{align}
q_{0,2n}&\mapsto
\sum_{r=0}^n \binom{n+1}{r+1}\rescaledpsi^{n-r}\rescaledkappa_r + 2^{n+1}\rescaledpsi^n.
\label{eq: pullback in q basis}
\end{align}
The \emph{extended inverse pullback map} $\widehat\pullbackmap:\widehat{{\Lambda_q}}\to\widehat{{\Lambda_\rescaledkappa}}[\rescaledpsi^{-1}]$ is 
\begin{align}
\label{extended inverse pullback map}
q_{0,2n}&\mapsto
\rescaledkappa_n + \rescaledpsi^n.
\end{align}
\end{dfn}
\begin{rem}
\label{remark: relation to Yin's pullback}
As we saw in Remark \ref{jacobian_curve} according to \cite[Corollary 3.8]{Y1} the pullback map 
$\pi^*: \CH^*(\J_g) \to \CH^*(\CC_g)$ descends to an isomorphism between the space $\bigoplus_{i=0}^{3g-2} R^*_{(0,2i)}(\J_g)$ and the tautological ring of the universal curve $\CC_g$. 
If we map our ring ${\Lambda_\rescaledkappa}$ to $R^*(\CC_g)$ via
\begin{align*}
\rescaledkappa_n&\mapsto \frac{\kappa_n}{4^n},& \rescaledpsi&\mapsto \frac{\psi}{4},
\end{align*}
then our $\pullbackmap$ descends to his ``$\pi^*$''.
Our formula~\eqref{eq: pullback in q basis} and Yin's formula (identity (3.7) in op.~cit.):
\begin{align*}
\label{eq: pullback in p basis}
p_{0,2n}&\mapsto 
\frac{1}{2^{n+1}}\sum_{r=0}^n \binom{n+1}{r+1}\psi^{n-r}\kappa_r + \psi^n
\end{align*}
differ only by our change of basis~\eqref{eq: change of basis}.
\end{rem}
\begin{lem} 
\label{lemma: poli versus c-}
Let $\widetilde{{\Lambda_q}}$ be the subalgebra of ${\Lambda_q}$ containing only $q_{i,j}$ with $i= j+2$. The following diagram commutes:
\[
\begin{tikzcd}
	\widetilde{{\Lambda_q}}
	\ar{rr}{\sum_{r=0}^\infty\partial_{\Poli}^r}
	\ar{dd}
	&&
	{\Lambda_q}\ar{r}{\pullbackmap}
	&
	{\Lambda_\rescaledkappa}\ar[dr]\\
	&&&&{\Lambda_\rescaledkappa}[\rescaledpsi^{-1}]
	\\
	\widehat{{\Lambda_q}}\ar{rr}[swap]{\sum_{r=0}^\infty\widehat{\partial}_{c-}^r}&&\widehat{{\Lambda_q}}\ar{r}[swap]{\widehat\pullbackmap}&\widehat{{\Lambda_\rescaledkappa}}[\rescaledpsi^{-1}]\ar{ur}
\end{tikzcd}
\]
\end{lem}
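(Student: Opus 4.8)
The plan is to establish the commutativity by tracking a single generator $q_{i,i-2}\in\widetilde{\Lambda_q}$ around both routes of the square and reducing everything to a comparison of the atomic operators after applying the inverse pullback maps. The key observation is that the inverse pullback map $\pullbackmap$ (resp.\ $\widehat\pullbackmap$) collapses the bivariate grading on the $q$'s to the univariate grading on the $\rescaledkappa$'s, and in the target rings $\Lambda_\rescaledkappa[\rescaledpsi^{-1}]$ and $\widehat{\Lambda_\rescaledkappa}[\rescaledpsi^{-1}]$ the two formulas~\eqref{eq: pullback in q basis} and~\eqref{extended inverse pullback map} are intertwined by the substitution $\rescaledkappa_n\leftrightarrow\sum_{r=0}^n\binom{n+1}{r+1}\rescaledpsi^{n-r}\rescaledkappa_r+2^{n+1}\rescaledpsi^n$ together with $\rescaledkappa_{-1}\mapsto 0$ (and remembering that passing from $\widehat{\Lambda_q}$ back to $\Lambda_q$ kills $q_{0,-2}$ and $q_{1,-1}$). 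So the diagram will commute as soon as we check that $\partial_{\Poli}$ and $\widehat\partial_{c-}$ act compatibly through the pullbacks—that is, $\widehat\pullbackmap\circ\widehat\partial_{c-}=(\text{the image of }\partial_{\Poli})\circ\pullbackmap$ in an appropriate sense—since then the two geometric series of operators match term by term.

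First I would reduce the problem to the case of a product of $q_{0,2n}$'s alone. On $\widetilde{\Lambda_q}$ the starting generator is $q_{i,i-2}$, but applying $\partial_{\Poli}=\partial_1-\partial_2+\partial_\psi$ repeatedly to such an element lowers the first index by $2$ at each application of $\partial_1$ or $\partial_\psi$ and by more under $\partial_2$, and I claim that—on the nose or after pullback—the only surviving contributions land in the span of the $q_{0,2n}$'s and products thereof. The operator $\partial_1$ takes $q_{i,j}$ to $\binom{i}{2}q_{i-2,j}$, so it moves $q_{i,i-2}$ down the "diagonal minus two" family; $\partial_2$ on a single generator does nothing (it needs two factors), and $\partial_\psi$ produces $q_{i-1,j-1}q_{k-1,l-1}$-type terms. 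The bookkeeping here is the heart of why one restricts the left column to $\widetilde{\Lambda_q}$: this is precisely the subalgebra whose image under $\pi$ lands in the relevant piece $\bigoplus R^*_{(0,2i)}$, matching Remark~\ref{remark: relation to Yin's pullback}. I would make this precise by an induction on the number of generators and on total degree.

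Next, the analytic core: verify the identity on $q_{0,2n}$ (and on products, using the Leibniz rule for the second-order operators, which is where the combinatorial binomial coefficients $\binom{i}{2}$ and $\frac{1}{2}ik$ in Definition~\ref{definition: differential operators} get consumed). After applying $\pullbackmap$ one expands $q_{0,2n}\mapsto\sum_r\binom{n+1}{r+1}\rescaledpsi^{n-r}\rescaledkappa_r+2^{n+1}\rescaledpsi^n$ and computes how $\partial_{\Poli}$ acts before the pullback versus how $\widehat\partial_{c-}$ acts in the extended ring where additionally $\partial'_\psi$ contributes the $q_{i-2,j-2}$ capping term; the claim is that the extra $\partial'_\psi$ term on the extended side is exactly what is needed to absorb the discrepancy between the two pullback normalizations (the $2^{n+1}\rescaledpsi^n$ versus $\rescaledpsi^n$, and the sum over $r$). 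This is a finite identity of polynomials in $\rescaledpsi,\rescaledkappa_\bullet,\rescaledpsi^{-1}$ for each $n$, which I would prove by a generating-function manipulation rather than termwise: encode $\sum_n q_{0,2n}x^n$ and push both operators through.

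The main obstacle I anticipate is the second-order part: because $\partial_2$, $\partial_\psi$, $\partial'_\psi$ are all degree-two in $\partial_q$, applying $\sum_r\partial_{\Poli}^r$ to a product of several $q_{0,2n}$'s generates a combinatorial explosion of cross-terms, and one must check that \emph{after} the pullback these reorganize into the iterate $\sum_r\widehat\partial_{c-}^r$ applied to the product of the $\widehat\pullbackmap$-images. I expect the cleanest way through is to observe that both sides are algebra maps out of a free polynomial ring once one knows the single-generator identity together with a compatibility of the operators with the coproduct implicit in "$q_{i,j}$ = surface with $i$ boundary components" (Remark~\ref{remark: generators are surfaces}); i.e., the gluing operators are built to be "local" in the boundary components, and this locality is what makes the iterate of a second-order operator still controllable. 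Making that locality precise—essentially a bialgebra/PBW-type statement for these surface operators—is the step I would budget the most care for.
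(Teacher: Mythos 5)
There is a genuine gap. Your sketch correctly identifies the overall shape---reduce to connected pieces and compare the two pullback normalizations~\eqref{eq: pullback in q basis} and~\eqref{extended inverse pullback map}---but the mechanism you propose as the ``key observation'' is not the one that works, and the two steps that actually carry the proof are missing. An intertwining $\widehat\pullbackmap\circ\widehat\partial_{c-}=(\cdots)\circ\pullbackmap$ cannot hold application-by-application: a single application of $\widehat\partial_{c-}$ already differs from $\partial_{\Poli}$ by $\partial'_\psi$ and by terms that create disks $q_{1,-1}$ and spheres $q_{0,-2}$, and these discrepancies do not cancel one step at a time. The correct comparison is only of the full iterates: for a monomial $\nu$ with total boundary count $2I$, only the $r=I$ term of either geometric series survives the pullback, and one must compare $\pullbackmap(\partial_{\Poli}^I\nu)$ with $\widehat\pullbackmap(\widehat\partial_{c-}^I\nu)\rvert_{\rescaledkappa_{-1}=0}$ as a whole, indexing both sides by ordered labeled perfect pairings of boundary components.

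The two missing mechanisms are then: (i) every term in which a disk ever arises occurs in a pair whose combined contribution is proportional to $1-q_{0,-2}\rescaledpsi$, which $\widehat\pullbackmap$ sends to $-\rescaledkappa_{-1}\rescaledpsi$ and hence to zero under the evaluation $\rescaledkappa_{-1}=0$; this is precisely where the hypothesis $i=j+2$ (all components of genus zero) and the extended ring earn their keep, and your proposal never engages with it. (ii) The remaining extra terms, coming from $\widehat\partial'_\psi$ applied to a connected surface, are matched by replacing each one-component capping with a one-component gluing and verifying, one connected component at a time, the polarization identity
\[
\sum_{r=0}^{n}\binom{n+1}{r+1}\rescaledpsi^{n-r}\rescaledkappa_r+2^{n+1}\rescaledpsi^{n}
=\sum\rescaledpsi^{\,n+1-\alpha}\left(\rescaledkappa_{\alpha-1}+\rescaledpsi^{\alpha-1}\right),
\]
the sum running over the $2^{n+1}$ cylinder/caps labelings of an $(n{+}1)$-pair pairing with $\alpha$ cylinders, with the leftover $\rescaledpsi^{n+1}\rescaledkappa_{-1}$ term again killed by $\rescaledkappa_{-1}=0$. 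This identity is exactly the discrepancy between the two pullback formulas, and it is the step your sketch defers to an unspecified generating-function manipulation. Finally, the ``combinatorial explosion'' you budget the most care for is handled simply by the pairing-indexed bookkeeping above; no bialgebra or PBW statement is needed.
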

\begin{proof}


It suffices to check on a monomial $\nu$ in the variables $q_{i,j}$. Write $I$ for half of the total sum of all $i$ indices in the variables of $\nu$; then the only nonzero contribution comes from $\partial_{\Poli}^I$ along the top and $\widehat\partial_{c-}^I$ along the bottom. Then we are checking that
\[
\pullbackmap(\partial_{\Poli}^I\nu)=\widehat{\pullbackmap}(\widehat\partial_{c-}^I\nu)\rvert_{\rescaledkappa_{-1}=0}
.\]
It will be convenient to consider the sums involved in testing this equality as occurring over surfaces \`a la Remark~\ref{remark: generators are surfaces}, where boundary components are matched up with one another with cylinders (for applications of $\partial_1$ and $\partial_2$) or with pairs of caps (for $\partial_\psi$ and $\partial'_\psi$). Then the sum making up $\widehat\partial_{c-}^I\nu$ can be indexed over all ordered perfect pairings of boundary components, along with a choice of ``cylinder'' or ``caps'' for each pair. 
Note that the condition $i=j+2$ means that each component at the beginning is of genus zero.

Many such ordered labeled perfect pairings index the sum on the left side as well, but some are missing.
We will consider a dichotomy of two types of such missing or mismatched terms.
The first type consists of ordered perfect pairings where at some point in the iterated gluing procedure a disk ($q_{1,-1}$) arises.
The second type consists of surfaces where there is never a disk and where a pair of caps is applied to a connected surface (i.e., the operator $\widehat\partial'_\psi$ is applied).

For the first type, we note that $\widehat\partial_1$ and $\widehat\partial'_\psi$ do not act on $q_{1,-1}$, so eventually in the gluing procedure the disk ($q_{1,-1}$ term) must be glued to some other connected component. This is done either with a cylinder (via the two-component gluing operator $\widehat\partial_2$) or a pair of caps (via the two-component capping operator $\widehat\partial_\psi$); then this kind of missing term arises in pairs. 
That is, consider pairs which consist of the same operators in the same order except we swap the first application of a cylinder or caps between a disk and another surface.
Then the eventual contribution from this pair consists of $\alpha(1 - q_{0,-2}\rescaledpsi)$ for some $\alpha$. But then evaluating via $\widehat{\pullbackmap}$ yields $\widehat{\pullbackmap}(\alpha)(1-\kappa_{-1}\rescaledpsi - 1)$ which evaluates to zero when $\kappa_{-1}=0$. 

For the second type in our dichotomy, we perform a similar but more involved trick, replacing every extended one-component capping ($\widehat\partial'_\psi$) with an application of a cylinder ($\widehat\partial_1$).
The modified ordered labeled partial pairing which results indeed appears in the indexing set on the left hand side of the equation in $\partial_{\Poli}^I\nu$.
This is because no disks arise by assumption and no sphere can arise without either a disk or a pair of caps on the same connected surface.
Call this assignment (starting with an ordered labeled perfect pairing and replacing extended one-component capping with extended one-component gluing) $\zeta$.

Then it will suffice to show that for each ordered labeled perfect pairing $\Upsilon$ and the corresponding monomial $\nu_{\Upsilon}$, we have the equality
\[
\pullbackmap(\nu_\Upsilon)= \sum_{\widehat{\Upsilon}\in \zeta^{-1}(\Upsilon)} \widehat{\pullbackmap}(\widehat{\nu}_{\widehat{\Upsilon}})\biggr|_{\mathrlap{\rescaledkappa_{-1}=0}}.
\]
The connected components of the surface which arises from the gluing indexed by the ordered labeled perfect pairing $\Upsilon$ are in canonical bijection with the connected components of the surface for each $\widehat{\Upsilon}$ in the summation. 
Then it suffices to check the above equality for a single connected component. 

So now let $\Upsilon_{\partialpairing}$ be an ordered pairing (not necessarily perfect, not labeled) on the boundary components of a connected surface of genus zero with $(n+1)$ pairs.
These pairs correspond to one component gluings.
Let $\widehat{\Upsilon}_{\partialpairing}$ be a labeled version, where we label each pair either ``cylinder'' or ``caps''. Let $\alpha(\widehat{\Upsilon}_{\partialpairing})$ be the number of ``cylinder labels''.

Since we began with components of genus zero, and (extended) two-component gluings and cappings can never create genus, this eventually correspnods to a surface with no boundary components and Euler characteristic $-2n$.
Then we must only show for all $\Upsilon_{\partialpairing}$ that 
\begin{multline*}
\sum_{r=0}^n \binom{n+1}{r+1}\rescaledpsi^{n-r}\rescaledkappa_r + 2^{n+1}\rescaledpsi^n
\\=\sum \rescaledpsi^{n+1-\alpha(\widehat{\Upsilon}_{\partialpairing})}\left(\rescaledkappa_{\alpha(\widehat{\Upsilon}_{\partialpairing})-1} + \rescaledpsi^{\alpha(\widehat{\Upsilon}_{\partialpairing})-1}\right),
\end{multline*}
where the sum on the right is over all $\widehat{\Upsilon}_{\partialpairing}$ that become $\Upsilon_{\partialpairing}$ by forgetting the labeling.
Then this final equation follows from noticing that there are $2^{n+1}$ elements indexing the sum on the right hand side corresponding to a choice between cylinders and caps for each term in the pairing $\Upsilon_{\partialpairing}$.
This $2^{n+1}$ polarizes into $\binom{n+1}{r+1}$ choices of $\widehat{\Upsilon}_{\partialpairing}$ with $\alpha(\widehat{\Upsilon}_{\partialpairing})-1=r$. 
There is one term left over on the right, namely $\rescaledpsi^{n+1}\rescaledkappa_{-1}$, which evaluates to zero.
\end{proof}

We will also want to pick out a particular case.
\begin{notation}
\label{defi: the coefficient for connected surfaces with no caps}
We use the notation $\beta_{0,2k}$ to denote the coefficient of $q_{0,2k}$ in $\partial_+^{3k}(q_{3,1}^{2k})$. 
For consistency in our formulas we let $\beta_{0,2k+1}$ be zero.
\end{notation}

\begin{ex}
To illustrate the method we look at the case $g=2$ and compute the expression $\partial_+^{3}(q_{3,1}^2)$.
The monomial $q_{3,1}^2$ corresponds to two vertices and there are three leaves attached to every vertex of the graph.
Every application of the operator $\partial_1$ or $\partial_2$ corresponds to gluing two half edges.
We obtain two distinct isomorphism classes of graphs, depending on whether there are any self-gluings.
In the first case, we have no self-gluings, and the coefficient of $q_{0,2}$ is $36=(3\cdot 3)(2\cdot 2)(1\cdot 1)$.
In the second case, we have two self-gluings, and the order of gluings matters, so the coefficient is $54=3(3\cdot 3)2$, and so $\beta_{0,2}$ is $90$. See Figure~\ref{figure: first and second}


\begin{figure}[h]
\begin{tikzpicture}
\node[circle,fill=black,inner sep=2.5pt,draw] (a) at (180:1cm) {};
\node[circle,fill=black,inner sep=2.5pt,draw] (b) at (0:1cm) {};
\draw[thick] (a) edge[bend left] (b);
\draw[thick] (a) edge (b);
\draw[thick] (a) edge[bend right] (b);
\end{tikzpicture}
\qquad{}
\qquad{}
\qquad{}
\begin{tikzpicture}
\node[circle,fill=black,inner sep=2.5pt,draw] (a) at (180:1cm) {};
\node[circle,fill=black,inner sep=2.5pt,draw] (b) at (0:1cm) {};
\draw[thick] (1.5,0) circle (.5cm);
\draw[thick] (-1.5,0) circle (.5cm);
\draw[thick] (a) edge (b);
\end{tikzpicture}
\caption{The graphs corresponding to $\partial_2^3$ and $\partial_2\partial_1^2$}
\label{figure: first and second}
\end{figure}
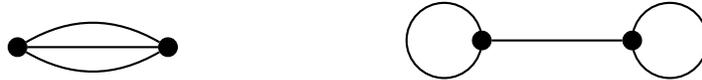



\end{ex}
Our next goal is to count $\partial_-^{3k}(q_{3,1}^{2k})$ in order to relate it to the top Faber--Zagier relation. 
It is too hard to directly obtain an explicit closed form for the coefficients involved in this expression, so we perform a kind of trick.
First we calculate directly a particular evaluation of $\partial_+^{3k}(q_{3,1}^{2k})$, which is easier both because the positive gluing operator is easier than the negative and because we're only interested in a special case.
Then we formally express $\partial_\pm^{3k}(q_{3,1}^{2k})$ in terms of the (non-explicit) coefficients $\beta_{0,2k}$.
Finally, we use the calculation in the special case to get an explicit formula for our case of actual interest.

Let us turn to the positive gluing operator. 
It behooves us to further investigate the metaphor of Remark~\ref{remark: generators are surfaces}.

\begin{dfn}
Let $\widehat{{\Lambda_\Sigma}}$ be the $\QQ$-vector space spanned by homeomorphism classes of (possibly disconnected) orientable surfaces with finite (possibly empty) labeled boundary, and let ${\Lambda_\Sigma}$ be the subspace spanned only by those surfaces with nonpositive Euler characteristic. 

Let the gluing operator $\partial_{\gluing}$ on $\widehat{{\Lambda_\Sigma}}$ take a surface $\Sigma$ to the sum of all surfaces obtained by gluing two boundary components of $\Sigma$ together.
Let the capping operator $\partial_{\capping}$ on $\widehat{\Lambda_\Sigma}$ take a surface $\Sigma$ to the sum of all surfaces obtained by capping off two distinct boundary components of $\Sigma$ with disks.
\end{dfn}
There are linear evaluations $\widehat\rho:\widehat{{\Lambda_\Sigma}}\to \widehat{{\Lambda_q}}$ and $\rho:{\Lambda_\Sigma}\to {\Lambda_q}$ which take disjoint unions to products and connected surfaces with $i$ boundary components and Euler characteristic $-j$ to $q_{i,j}$. 


\begin{lem} 
\label{lem: intertwining surface operators}
The map $\rho$ intertwines 
\begin{enumerate}
\item the operators $\partial_{\gluing}$ on ${\Lambda_\Sigma}$ and $\partial_+$ on ${\Lambda_q}$; that is,
\[
\rho \partial_{\gluing} = \partial_+\rho, 
\]
and
\item the operators $\partial_{\gluing}+\partial_{\capping}$ on $\widehat{{\Lambda_\Sigma}}$ and $\partial_{c,+}|_{\rescaledpsi=1}$ on $\widehat{{\Lambda_q}}$, i.e.,
\[
\widehat{\rho}(\partial_{\gluing}+\partial_{\capping}) = \ev_{\rescaledpsi=1} \partial_{c_+} \widehat{\rho}.
\]
\end{enumerate}
\end{lem}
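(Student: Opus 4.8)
The plan is to prove each intertwining relation by comparing the action on a basis element, namely a connected surface $\Sigma$ with $i$ labeled boundary components and Euler characteristic $-j$ (so $\rho(\Sigma) = q_{i,j}$), and then extending multiplicatively over disjoint unions. The heart of the matter is a bookkeeping identity: gluing two boundary components of $\Sigma$ either joins two \emph{distinct} boundary components (lowering the count by $2$, keeping Euler characteristic fixed, so landing on $q_{i-2,j}$) or joins a boundary component to itself. A self-gluing of a single boundary circle along an orientation-reversing identification would be non-orientable and is forbidden, so the only self-gluings are orientation-compatible ones; such a self-gluing of one of the $i$ boundary circles increases the genus by one while decreasing the number of boundary components by $2$, hence lands on $q_{i-2,j+2}$. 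Wait --- I need to recheck the Euler characteristic: gluing a single boundary circle to itself (identifying it with itself via the antipodal map, say) is the operation that turns a boundary circle into a cross-cap or, in the orientable world, is simply not allowed on a \emph{single} circle; the genuinely orientable move is to take two of the $i$ circles and glue them, which is the $q_{i-2,j}$ case. So in fact all gluings of $\partial_{\gluing}$ applied to a connected surface produce $q_{i-2,j}$, but the multiplicity is not simply $\binom{i}{2}$: we must also count gluings where the two chosen boundary components belong to different connected components when $\Sigma$ is disconnected, which is exactly what $\partial_2$ records.

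**First I would** make precise the combinatorial dictionary. For a connected surface $\Sigma$ with boundary components $B_1,\ldots,B_i$, a gluing of $B_a$ to $B_b$ ($a\ne b$) uses an orientation-reversing homeomorphism $B_a \to B_b$ (so that the glued surface is orientable); up to homeomorphism of the result, there is one such gluing for each unordered pair $\{a,b\}$, and the result has $i-2$ boundary components and the same Euler characteristic $-j$ (gluing along a circle, $\chi(S^1)=0$, is additive in $\chi$). This gives $\binom{i}{2}$ copies of $q_{i-2,j}$, matching the coefficient $\binom{i}{2}$ in $\partial_1$. For a disjoint union $\Sigma = \Sigma' \sqcup \Sigma''$ with $\Sigma'$ contributing $q_{i,j}$ and $\Sigma''$ contributing $q_{k,l}$, a gluing of a boundary component of $\Sigma'$ to one of $\Sigma''$ produces a connected surface with $i+k-2$ boundary components and Euler characteristic $-(j+l)$, i.e.\ $q_{i+k-2,j+l}$; there are $ik$ ordered choices, but the gluing is symmetric, so $\binom{ik}{?}$--- more carefully, there are $ik$ unordered pairs (one boundary circle from each piece), but in $\partial_2=\sum \frac{1}{2} ik\, q_{\cdots}\partial\partial$ the factor $\frac12$ together with the sum over ordered pairs $(i,j),(k,l)$ exactly reproduces counting each unordered pair of components once, so the coefficient works out to $ik$ copies per unordered pair of components. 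Assembling: $\partial_{\gluing}$ on a monomial $\prod q_{i_\ell, j_\ell}$ equals $(\partial_1 + \partial_2)\prod q_{i_\ell,j_\ell} = \partial_+ \prod q_{i_\ell,j_\ell}$, which is exactly $\rho\partial_{\gluing} = \partial_+\rho$ on basis elements; by $\QQ$-linearity it holds everywhere. (One must observe that $\rho$ lands in ${\Lambda_q}$, i.e.\ no positive-Euler-characteristic surface is produced by gluing surfaces of nonpositive Euler characteristic — true since $\chi$ is additive under gluing along circles.)

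**For part (2)** the argument is parallel but now also tracks the capping operator and the variable $\rescaledpsi$. Capping off two \emph{distinct} boundary components $B_a, B_b$ of a connected surface $\Sigma$ with disks produces a connected surface with $i-2$ boundary components and Euler characteristic $-j + 2$ (each disk cap adds $\chi(D^2) - \chi(S^1) = 1$), i.e.\ a $q_{i-2,j-2}$ term, with multiplicity $\binom{i}{2}$ --- matching $\partial'_\psi$ after setting $\rescaledpsi = 1$. Capping off one boundary component from $\Sigma'$ (contributing $q_{i,j}$) and one from $\Sigma''$ (contributing $q_{k,l}$) merges them into a connected surface with $i + k - 2$ boundary components and Euler characteristic $-(j+l) + 2 = -(j + l - 2)$, i.e.\ $q_{i-1,j-1}\,q_{k-1,l-1}$... one checks $i+k-2 = (i-1)+(k-1)$ and $(j-1)+(l-1) = j+l-2$, so this is the $\partial_\psi$ term, with $ik$ copies per unordered pair, matching $\partial_\psi$ at $\rescaledpsi=1$. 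Summing $\partial_{\gluing} + \partial_{\capping}$ thus corresponds to $\partial_1 + \partial_2 + \partial_\psi + \partial'_\psi = \partial_{c+}$ evaluated at $\rescaledpsi = 1$; since we are on $\widehat{{\Lambda_\Sigma}}$ and $\widehat{{\Lambda_q}}$ we now allow disks ($q_{1,-1}$) and spheres ($q_{0,-2}$) to appear as outputs, so the extended operators $\widehat\partial$ are the correct targets and no terms are lost. Hence $\widehat{\rho}(\partial_{\gluing} + \partial_{\capping}) = \ev_{\rescaledpsi = 1}\,\widehat\partial_{c+}\,\widehat{\rho}$.

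**The main obstacle** I anticipate is getting the orientation and multiplicity conventions exactly consistent with the $\frac12 ik$ normalization in $\partial_2$ and $\partial_\psi$ versus the $\binom{i}{2}$ in $\partial_1$ and $\partial'_\psi$: one must be careful that "gluing two boundary components of a single surface" is counted once per unordered pair (giving $\binom{i}{2}$), whereas the differential-operator form $\sum_{i,j,k,l}\frac12 ik(\cdots)\partial_{q_{i,j}}\partial_{q_{k,l}}$ sums over ordered pairs of slots and so the $\frac12$ compensates --- except when a monomial has two equal factors $q_{i,j}^2$, where the chain rule produces the correct $2$ automatically. I would verify this by checking a small case (e.g.\ the $g=2$ computation $\partial_+^3(q_{3,1}^2)$ already in the paper, whose answer $\beta_{0,2} = 90$ should be reproducible purely by enumerating surfaces: two thrice-punctured spheres, glue all six boundary circles in pairs). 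The only other subtlety is confirming that homeomorphism classes of glued surfaces are correctly enumerated --- i.e.\ that gluing $B_a$ to $B_b$ via different orientation-reversing homeomorphisms of the circle gives homeomorphic results --- which follows from the fact that $\mathrm{Diff}^+(S^1)$ is connected and the mapping class group of a surface acts transitively enough; this is standard and I would cite it rather than prove it.
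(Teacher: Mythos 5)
Your overall strategy is the same as the paper's: check the identity on a basis of (disjoint unions of) surfaces, matching the four combinatorial operations---gluing two boundary circles of one component, gluing circles of two different components, capping two circles of one component, capping one circle of each of two components---with $\partial_1$, $\partial_2$, $\partial'_\psi$, $\partial_\psi$ respectively, and accounting for the $\binom{i}{2}$ versus $\tfrac12 ik$ normalizations exactly as the paper does. The first part, after the self-correcting detour about self-identifications of a single circle, lands in the right place.

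There is, however, one genuine error in your justification of the $\partial_\psi$ term. Capping one boundary component of $\Sigma'$ and one of $\Sigma''$ with disks does \emph{not} ``merge them into a connected surface'': the two caps are two disjoint disks, so the result stays disconnected, with each of the two components separately losing one boundary circle and gaining $1$ in Euler characteristic. That is precisely why the output of $\partial_\psi$ is the \emph{product} $q_{i-1,j-1}\,q_{k-1,l-1}$ rather than the single generator $q_{i+k-2,\,j+l-2}$ that your ``merging'' picture would predict; these are distinct monomials in $\widehat{\Lambda_q}$, so verifying only that the total boundary count and total Euler characteristic agree does not distinguish them and is not a proof of that term. (It is the \emph{gluing} of two components by a cylinder, i.e.\ $\partial_2$, that connects them; capping never does.) The fix is immediate---just describe the two components separately---and the remainder of your argument, including the restriction of $\partial_{\gluing}$ to $\Lambda_\Sigma$ via additivity of Euler characteristic and the passage to the extended operators on $\widehat{\Lambda_q}$ where disks and spheres may appear, is sound.
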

\begin{proof}
Because gluing preserves Euler characteristic, the operator $\partial_{\gluing}$ restricts as follows
\[
\begin{tikzcd}
\Lambda_\Sigma\dar[hookrightarrow]\rar[dashed]{\partial_{\gluing}}&\Lambda_\Sigma\dar[hookrightarrow]
\\
\widehat{\Lambda_\Sigma}\rar[swap]{\partial_{\gluing}} & \widehat{\Lambda_\Sigma}
\end{tikzcd}
\]
so the first statement makes sense.

Applying $\partial_1$ on the right (in either case) corresponds to gluing two boundary components of the same connected surface, while applying $\partial_2$ corresponds to gluing boundary components of two different connected surfaces. 
The coefficient $\binom{i}{2}$ corresponds to the choice of two boundary components of a surface with $i$ boundary components; the coefficient $ik$ corresponds to choosing one boundary component each from surfaces with $i$ and $k$ respectively.
The half is there because the formula is symmetric and otherwise would count each pair twice.
Similarly, the one-component and two-component capping operators correspond to the operations that cap two boundary components of a single connected component or of two distinct connected components, respectively.
\end{proof}
\begin{cor}\label{cor: count of trees}
We have the following identity:
\[\partial_+^{3k}q_{3,1}^{2k}\biggr|_{q_{0,n}=1}=\frac{(6k)!}{8^k}
\] 
(the evaluation happening for all $n$).
\end{cor}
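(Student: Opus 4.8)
The plan is to pass to the surface-theoretic picture of Lemma~\ref{lem: intertwining surface operators} and reduce the identity to the enumeration of ordered perfect matchings.

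First I would choose a preimage of $q_{3,1}^{2k}$ under $\rho$. Take $\Sigma_0\in{\Lambda_\Sigma}$ to be the disjoint union of $2k$ copies of the pair of pants (the connected genus-zero surface with three boundary components, which has Euler characteristic $-1$), with its $6k$ boundary components assigned pairwise distinct labels. Since $\rho$ is multiplicative and forgets labels, $\rho(\Sigma_0)=q_{3,1}^{2k}$. Gluing two boundary components preserves Euler characteristic and decreases the number of boundary components by two; moreover, gluing two boundary components of distinct connected components adds their Euler characteristics, so by induction every connected component of every surface arising from $\Sigma_0$ by iterated gluing has Euler characteristic at most $-1$. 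In particular no disk or sphere ever occurs, so $\partial_{\gluing}^r\Sigma_0\in{\Lambda_\Sigma}$ for all $r$, and iterating the relation $\rho\partial_{\gluing}=\partial_+\rho$ of Lemma~\ref{lem: intertwining surface operators}(1) gives
\[
\partial_+^{3k}(q_{3,1}^{2k})=\partial_+^{3k}\rho(\Sigma_0)=\rho\bigl(\partial_{\gluing}^{3k}\Sigma_0\bigr).
\]

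Next I would expand the right-hand side combinatorially. Each application of $\partial_{\gluing}$ to a surface with $N$ boundary components is the sum, over the $\binom{N}{2}$ unordered pairs of boundary components, of the surface obtained by gluing that pair; the boundary components at every stage form a subset of the original $6k$ labels. Hence $\partial_{\gluing}^{3k}\Sigma_0=\sum_M\Sigma_M$, where $M$ runs over all ordered perfect matchings of the $6k$ labels, i.e. sequences $(P_1,\dots,P_{3k})$ of pairwise disjoint two-element sets exhausting the labels, and $\Sigma_M$ is the (necessarily closed) surface obtained by performing those gluings in order. An ordered perfect matching is a perfect matching together with a linear ordering of its $3k$ blocks, so the number of matchings $M$ is
\[
(6k-1)!!\cdot(3k)!=\frac{(6k)!}{2^{3k}\,(3k)!}\cdot(3k)!=\frac{(6k)!}{8^k}.
\]

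Finally, for each $M$ the surface $\Sigma_M$ is closed, with every connected component of negative (hence even) Euler characteristic, so $\rho(\Sigma_M)$ is a monomial in the variables $q_{0,n}$ (one factor $q_{0,-\chi}$ per component) and specializes to $1$ under $q_{0,n}\mapsto 1$ for all $n$. Applying $\rho$ linearly and specializing therefore yields $\partial_+^{3k}(q_{3,1}^{2k})\bigr|_{q_{0,n}=1}=\sum_M 1=\frac{(6k)!}{8^k}$. The argument is essentially bookkeeping; the only points needing a little care are the arithmetic evaluation of the matching count and the verification that no positive-Euler-characteristic component ever intervenes (so that $\rho$ applies at every stage and each matching genuinely contributes $1$), and I expect neither to pose a real obstacle.
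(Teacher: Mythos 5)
Your proof is correct and follows essentially the same route as the paper: realize $q_{3,1}^{2k}$ as $\rho$ of $2k$ disjoint pairs of pants, use the intertwining $\rho\partial_{\gluing}=\partial_+\rho$, and count ordered perfect matchings of the $6k$ boundary components as $(6k-1)!!\,(3k)!=(6k)!/8^k$. The only addition is your explicit check that Euler characteristics stay negative so the computation remains in ${\Lambda_\Sigma}$, which the paper leaves implicit.
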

\begin{proof}
Let $\Sigma$ be the disjoint union of $2k$ pairs of pants. 
Then
\[\partial_+^{3k}q_{3,1}^{2k}=\partial_+^{3k}\rho(\Sigma)=\rho \partial_{\gluing}^{3k}(\Sigma).\]
But $\partial_{\gluing}^{3k}(\Sigma)$ is a sum over the perfect pairings on the $6k$ boundary components of $\Sigma$ along with an ordering of the $3k$ pairs of the perfect pairing. Then there are 
\[
(6k-1)!!(3k)!=\frac{(6k)!(3k)!}{(3k)!2^{3k}}=\frac{(6k)!}{8^k}
\]
of these.
\end{proof}

Let $\mathfrak{P}(n)$ denote the unordered partitions of the set $\{1,\ldots,n\}$ (see Appendix~\ref{app:series} for details on our notation).
\begin{lem}\label{lem: D as exp-type sum}
We have the following identity:
\begin{equation}\label{equation: D as exp}
 \sum_{k=0}^\infty 
z^{2k} 
\frac{\partial_\pm^{3k}(q_{3,1}^{2k})}{(3k)!(2k)!}
=
\exp\left(
\sum_{n=1}^\infty\frac{\pm \beta_{2n}}{(3n)!(2n)!}q_{0,2n}z^{2n}
\right).
\end{equation}

\end{lem}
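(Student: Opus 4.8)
The plan is to evaluate the surface operators on disjoint unions of pairs of pants and to organize the bookkeeping using the exponential formula. First I would observe, following Lemma~\ref{lem: intertwining surface operators}, that $q_{3,1}^{2k}=\rho(\Sigma_k)$ where $\Sigma_k$ is the disjoint union of $2k$ pairs of pants, and that $\partial_\pm^{3k}(q_{3,1}^{2k})$ can be computed by first applying the surface-level gluing operator $\partial_{\gluing}^{3k}$ to $\Sigma_k$ and then applying $\rho$ (with signs tracked by whether a gluing is one-component or two-component). So $\partial_{\gluing}^{3k}(\Sigma_k)$ is a sum over ordered perfect pairings of the $6k$ boundary components, and after applying $\rho$ each such term becomes a product of $q_{0,2n}$'s, one factor for each connected component of the resulting surface (which necessarily has no boundary and, since each pair of pants has Euler characteristic $-1$, Euler characteristic equal to minus the number of pants it absorbed).

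Next I would set up the combinatorial identity via the exponential formula for set partitions (Appendix~\ref{app:series}). An ordered perfect pairing on $6k$ points producing a fixed partition of the $2k$ pants into blocks contributes, block by block: the number of ordered perfect pairings internal to a block of size $2n$ pants (i.e.\ $3n$ ordered pairs among $6n$ boundary components that connect those $2n$ pants into a single closed surface) is, up to sign, exactly the quantity counted by $\beta_{2n}$ — this is essentially Notation~\ref{defi: the coefficient for connected surfaces with no caps}, since $\beta_{0,2n}$ is by definition the coefficient of $q_{0,2n}$ in $\partial_+^{3n}(q_{3,1}^{2n})$, i.e.\ the count of connected outcomes. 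The sign $\pm$ contributed by a block of size $2n$ is $(\pm 1)^{?}$; since a connected genus-$(n-1)$... rather, since producing a connected closed surface from $2n$ pants requires $2n-1$ two-component gluings and the remaining $n+1$ one-component gluings, the total sign of a term is $(\pm1)$ raised to the number of two-component gluings, which is $3k-(\text{number of blocks})$, and one checks this matches distributing $\pm\beta_{2n}$ over the blocks after accounting for the overall normalization. Then I would divide by $(3k)!(2k)!$: the $(2k)!$ accounts for the unordered-ness of the blocks of pants across the whole partition versus the labeling, and the $(3k)!$ together with the per-block $(3n)!$'s in the denominator of $\beta_{2n}/((3n)!(2n)!)$ accounts for the ordering of the $3k$ gluings being distributed among blocks. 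After these identifications, Equation~\eqref{equation: D as exp} is exactly the statement of the exponential formula: the generating function in $z^{2k}$ of the sum over set partitions of products of block weights $w_{2n}=\pm\beta_{2n}z^{2n}/((3n)!(2n)!)$ equals $\exp(\sum_n w_{2n})$.

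The main obstacle I expect is getting every combinatorial factor to land correctly — in particular, reconciling the three sources of $(2n)!$ and $(3n)!$ denominators (the ordering of the gluings, the labeling of pants within a block, and the definition of $\beta_{2n}$) and verifying that the sign of a generic term really is governed solely by the block structure, so that it factors through the blocks. This requires carefully tracking how an ordered sequence of $3k$ gluing operations, viewed as building up the partition of pants, corresponds to choosing an ordered set partition together with an interleaving of the per-block orderings, i.e.\ the multinomial $\binom{3k}{3n_1,\ldots,3n_b}$; this multinomial is precisely what converts the product of per-block $1/(3n_i)!$ into the global $1/(3k)!$ in the exponential formula. Once the dictionary is pinned down, comparison of coefficients of $z^{2k}$ on both sides is immediate, and I would present that comparison as the conclusion rather than re-deriving the exponential formula itself (citing Appendix~\ref{app:series}).
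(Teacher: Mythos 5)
Your proposal is correct and follows essentially the same route as the paper: decompose $\partial_\pm^{3k}(q_{3,1}^{2k})$ over set partitions of the $2k$ copies of $q_{3,1}$ recording connected components, identify the per-block contribution as $\pm\beta_{2n}$ (with the sign coming from the odd number $2n-1$ of two-component gluings needed to connect a block of $2n$ pants), distribute the ordering of the $3k$ gluings via the multinomial $(3k)!/\prod(3n_i)!$, and conclude by the exponential compositional formula of Appendix~\ref{app:series}. One small arithmetic slip: the total number of two-component gluings in a term is $2k$ minus the number of blocks, not $3k$ minus it; but since your per-block count of $2n-1$ is correct, the sign still factors over blocks exactly as you intend.
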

\begin{proof}
Since $\partial_{\pm}$ acts on the bigraded ring ${\Lambda_q}$ by lowering the first grading by two and preserving the second grading, necessarily $\partial_{\pm}^{3k}(q_{3,1}^{2k})$ is of the form
\[
\sum_{\mathfrak{p}\in \mathfrak{P}(2k)}\alpha_{\mathfrak{p}} \prod_{\mathfrak{b}_i\in \mathfrak{p}} q_{0,|\mathfrak{b}_i|}
\]
for some coefficient $\alpha_{\mathfrak{p}}$ depending on the partition $\mathfrak{p}$ (the partition keeps track of which copies of $q_{3,1}$ have been glued together). 
Each $|\mathfrak{b}_i|$ must be even for the coefficient to be non-zero, so we may assume $|\mathfrak{b}_i|=2k_i$ now (i.e., $\alpha_\mathfrak{p}=0$ if $\mathfrak{p}$ has an odd length block).
Each individual summand must arise by applying the two-component gluing operator $\partial_2$ precisely $2k_i-1$ times and the one-component gluing operator $\partial_1$ precisely $k_i+1$ times in some order to the monomial $q_{3,1}^{2k_i}$.
The sum of all the terms that arise in this way is then $\pm \beta_{2k_i}q_{0,2k_i}$ by Definition~\ref{defi: the coefficient for connected surfaces with no caps}. 
The sign is negative for the negative gluing operator because $\partial_2$ is necessarily applied an odd number of times. 
To get the coefficient $\alpha_\mathfrak{p}$, we need to combine these calculations over the blocks $\mathfrak{b}_i$ in the partition $\mathfrak{p}$.
This entails distributing the $3k$ applications of $\partial_{\pm}$ into individual (not necessarily contiguous) blocks of size $3k_i$. The number of ways of doing that is
\[
\frac{(3k)!}{\prod_{\mathfrak{b}_i\in \mathfrak{p}}(\frac{3}{2}|\mathfrak{b}_i|)!},
\]
so we get 
\[
\partial_{\pm}^{3k}(q_{3,1}^{2k})
=\sum_{\mathfrak{p}\in \mathfrak{P}(2k)}(3k)!\prod_{\mathfrak{b}_i\in \mathfrak{p}}\frac{\pm \beta_{|\mathfrak{b}_i|}}{(\frac{3}{2}|\mathfrak{b}_i|)!}q_{0,|\mathfrak{b}_i|}.
\]
By the exponential compositional formula (reviewed as Corollary~\ref{cor: exponential compositional formula}), the exponential generating function for $\frac{1}{(3k)!}\partial_{\pm}^{3k}(q_{3,1}^{2k})
$ is the formal exponential of the exponential generating function for $\frac{\pm \beta_{|\mathfrak{b}_i|}}{(\frac{3}{2}|\mathfrak{b}_i|)!}q_{0,|\mathfrak{b}_i|}$.
These are precisely the left and right sides of Equation~\eqref{equation: D as exp}.
\end{proof}

\begin{cor}\label{cor:fz1}
Write $\vec{q}=(q_{0,0},q_{0,2},q_{0,4},\ldots)$. Then
\[
\partial_-^{3k}(q_{3,1}^{2k})=0
\]
is the top Faber--Zagier relation of genus $3k-1$ for $\vec{q}$.
\end{cor}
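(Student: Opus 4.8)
The plan is to combine the exponential-type formula from Lemma~\ref{lem: D as exp-type sum} (in the negative case) with the special-value computation of Corollary~\ref{cor: count of trees} to pin down the coefficients $\beta_{0,2k}$, and then recognize the resulting generating function as the series $A(z)$ (up to rescaling) appearing in Example~\ref{example Faber Zagier A series}. First I would specialize Equation~\eqref{equation: D as exp} with the sign $+$ and evaluate at $q_{0,n}=1$ for all $n$; by Corollary~\ref{cor: count of trees} the left-hand side becomes $\sum_k z^{2k}\frac{(6k)!}{8^k(3k)!(2k)!}$, while the right-hand side becomes $\exp\bigl(\sum_n \frac{\beta_{0,2n}}{(3n)!(2n)!}z^{2n}\bigr)$. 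Taking logarithms identifies $\sum_n \frac{\beta_{0,2n}}{(3n)!(2n)!}z^{2n}$ with $\log\bigl(\sum_k \frac{(6k)!}{8^k(3k)!(2k)!}z^{2k}\bigr)$, i.e.\ with $\{\log A(w)\}$ evaluated after the substitution $w=72z^2$ (using the definition of $A$ and Remark~\ref{remark: rescaling FZ doesn't matter} to absorb the $72$ versus $8$ discrepancy into the normalization).

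Next I would feed this identification back into Lemma~\ref{lem: D as exp-type sum} with the sign $-$. Because $\partial_-$ differs from $\partial_+$ only by the sign on $\partial_2$, and each connected block $\mathfrak b_i$ of size $2k_i$ uses $\partial_2$ exactly $2k_i-1$ times (an odd number), the block coefficient acquires a sign $(-1)^{|\mathfrak b_i|-1}=-1$; hence
\[
\sum_{k=0}^\infty z^{2k}\frac{\partial_-^{3k}(q_{3,1}^{2k})}{(3k)!(2k)!}
= \exp\Bigl(-\sum_{n=1}^\infty \frac{\beta_{0,2n}}{(3n)!(2n)!}q_{0,2n}z^{2n}\Bigr).
\]
Substituting the expression for $\frac{\beta_{0,2n}}{(3n)!(2n)!}$ obtained above, this is exactly $\exp\bigl(-\{\log A\}_{\vec q}\bigr)$ after the substitution matching $q_{0,2n}$ to the $n$-th bracket variable and $z^{2n}$ to $w^n$ (with the harmless rescaling of Remark~\ref{remark: rescaling FZ doesn't matter}). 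Extracting the coefficient of $z^{2k}$ (equivalently $w^k$) from both sides, the left side gives $\frac{1}{(3k)!(2k)!}\partial_-^{3k}(q_{3,1}^{2k})$ and the right side gives $\bigl[\exp(-\{\log A\}_{\vec q})\bigr]_{w^k}$, which is the left-hand side of Equation~\eqref{eq: FZ relation} transported to the variable set $\vec q$. Therefore $\partial_-^{3k}(q_{3,1}^{2k})=0$ is precisely the top Faber--Zagier relation of genus $3k-1$ for $\vec q$, up to the nonzero scalar $(3k)!(2k)!$, which does not affect the vanishing statement.

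The main obstacle is bookkeeping of the rescalings and index conventions: one must check carefully that the variable $z$ here, the variable $z$ in the definition of $A$, and the bracket-variable indexing $\{x^n\}_{\vec\jmath}=x^n j_n$ all line up — in particular that $z^2$ on this side corresponds to the linear variable of $A$, that the factor $8^k$ versus $(6k)!/((3k)!(2k)!)\cdot(1/72)^k$ is reconciled through Remark~\ref{remark: rescaling FZ doesn't matter}, and that $q_{0,2n}$ plays the role of $\kappa_n$ (so that the $\vec q$-version matches the $\vec\kappa$-version under the obvious relabeling). The sign argument — that $\partial_2$ is applied an odd number of times within each connected block — is the one genuinely substantive point, and it is already recorded in the proof of Lemma~\ref{lem: D as exp-type sum}; everything else is formal manipulation of exponential generating functions.
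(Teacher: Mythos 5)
Your proposal is correct and follows essentially the same route as the paper: evaluate the positive case of Lemma~\ref{lem: D as exp-type sum} at $q_{0,n}=1$, use Corollary~\ref{cor: count of trees} to identify the $\beta_{2n}$ generating function with $\log A$ up to rescaling, and then read off the negative case as $\exp(-\{\log A\}_{\vec q})$. The only quibble is the substitution constant (it should be $w=9z^2$, since $(6k)!/(8^k(3k)!(2k)!)\,z^{2k}=\frac{(6k)!}{(3k)!(2k)!}(9z^2/72)^k$, rather than $w=72z^2$), but as you note this is immaterial by Remark~\ref{remark: rescaling FZ doesn't matter}.
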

\begin{proof}
First, evaluate the positive version of Equation~\eqref{equation: D as exp} at $q_{0,j}=1$. 
For the right side use Corollary~\ref{cor: count of trees}.
Taking formal logarithms we then get the equation
\[
\sum_{n=1}^\infty \frac{\beta_{2n}}{(3n)!(2n)!}z^{2n} =
\log\left( \sum_{n=0}^\infty\frac{(6n)!}{(3n)!(2n)!}\left(\frac{z^2}{8}\right)^n\right)=\log(A(9z^2)) 
\]
where the rightmost expression uses the series of Example~\ref{example Faber Zagier A series}.

Then the vanishing of the $z^{2n}$ term of the right hand side of Equation~(\ref{equation: D as exp}) for the negative gluing operator is equivalent to the vanishing of the $z^{2n}$ term of the right-hand side, i.e., the $z^n$ term (dividing powers of $z$ by two in the series) of 
\begin{align*}
\exp\left(
\sum_{n=1}^\infty\frac{- \beta_{2n}}{(3n)!(2n)!}q_{0,2n}z^n
\right)
&=
\exp\left(-\left\{\sum_{n=1}^\infty \frac{\beta_{2n}}{(3n)!(2n)!}z^n\right\}_{\vec{q}} \right)
\\
&=
\exp
\left(
-\left\{
\log(A(9z))
\right\}_{\vec{q}}
\right)
\end{align*}
(see Definition~\ref{defi: Faber Zagier brace operation} for notation).
By Remark~\ref{remark: rescaling FZ doesn't matter} the factor of $9$ does not affect the equation, and so the vanishing of the $z^n$ term here coincides with the relation of Equation~\ref{eq: FZ relation}.
\end{proof}

\section{The cancellation of contributions from \texorpdfstring{$\psi$}{psi}-classes}
\label{section: relating c and psi}

The goal of this section is to complete the proof of Theorem~\ref{thm:main}, which says that the vanishing of the tautological class $p_{3,1}^{2k}$ on $\J_g$ gives the top Faber--Zagier relation on the tautological ring of $\mathcal{M}_g$ by the method explained in Remark \ref{jacobian_curve}. 

According to Corollary \ref{cor:fz1} we know that the $3k$-fold application of the negative gluing operator $\partial_{-}^{3k}$ to the monomial $p_{3,1}^{2k}$ gives the top Faber--Zagier relation of genus $3k-1$. 
We also know via Lemma~\ref{lemma: poli versus c-} that the relation arising from the Polishchuk operator coincides with that arising from the $3k$-fold application of the negative surface operator $\partial_{c-}^{3k}$ to the same monomial.

However, the negative gluing and negative surface operators differ. 
The difference is given by the operator $\partial_{\psi}+\partial'_{\psi}$. 
To complete the proof, in this section we will show that all contributions of $\psi$ classes cancel with one other when we consider $\widehat{\pullbackmap}\circ \partial_{c-}^{3k}(p_{3,1}^{2k})$. 

\subsection{Enumerative combinatorics for surface gluing}
Our current and final goal is to perform the enumerative combinatorics for the application $\widehat{\pullbackmap}\circ \partial_{c-}^{3k}(p_{3,1}^{2k})$.
\begin{lem}
\label{lem: calculation of dc- on the qs}
The expression $\partial_{c-}^{3k}q_{3,1}^{2k}$ can be written as a sum over graphs as follows.
\[
\partial_{c-}^{3k}q_{3,1}^{2k}
=
(3k)!\sum_{r=0}^{3k}(2r-1)!! \rescaledpsi^{r} \sum_{\chi(\Gamma)=2r-2k}
\prod_{\Gamma_c}-q_{0,-\chi(\Gamma_c)},
\]
where $\Gamma$ runs over isomorphism classes of possibly disconnected ordered trivalent graphs of Euler characteristic $2r-2k$ with precisely $2k$ vertices and $2r$ leaves and $\Gamma_c$ runs over connected components of $\Gamma$.
\end{lem}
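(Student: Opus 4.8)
The plan is to expand $\partial_{c-}^{3k}q_{3,1}^{2k}$ using the combinatorial ``surface'' interpretation of Remark~\ref{remark: generators are surfaces} and Lemma~\ref{lem: intertwining surface operators}, and then match terms with the claimed graph sum. Recall that $\partial_{c-}=\partial_1-\partial_2+\partial_\psi+\partial'_\psi$. Exactly $3k$ applications of these atomic operators to $q_{3,1}^{2k}$ are forced: each starting generator $q_{3,1}$ is a genus-zero surface (a thrice-punctured sphere), there are $6k$ boundary components in total, each atomic operator consumes two of them, and for the first grading to reach zero we need precisely $3k$ of them. So the output is indexed by sequences of $3k$ atomic operations, each of which pairs two currently-available boundary components and tags them either ``cylinder/glue'' (one- or two-component gluing) or ``caps'' (one- or two-component capping). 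Modulo remembering which boundary came from which vertex, such a sequence is exactly an \emph{ordered} choice of a perfect pairing of the $6k$ original boundary components together with a cylinder-or-caps tag on each pair, i.e.\ an ordered trivalent graph with $2k$ vertices where each of the $3k$ edges is tagged; the ordering contributes the factor $(3k)!$ after we quotient out the order within isomorphic configurations.

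Next I would track the effect of the tags on the topology. The key point is the sign/coefficient bookkeeping. Each ``cylinder'' tag, whether it realizes $\partial_1$ or $-\partial_2$, contributes a sign: $\partial_1$ is positive but reduces Euler characteristic of a component by $-1$... let me instead say directly: gluing two boundary components lowers Euler characteristic by $0$ if it connects two components, and also by $0$ in the self-gluing case? No---I would carefully note that gluing a boundary to a boundary on the \emph{same} component decreases $\chi$ by $2$ (adds a handle), gluing two different components leaves total $\chi$ unchanged but merges them, while capping a boundary raises $\chi$ by $1$. Since $2k$ thrice-punctured spheres have total $\chi=2k\cdot(-1)=-2k$, and the $3k$ operations consist of $r$ caps and $3k-r$ glues (among the glues, some number are self-gluings), one computes that the final total Euler characteristic of a term using exactly $r$ caps is $2r-2k$, matching the constraint $\chi(\Gamma)=2r-2k$. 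This is where the power $\rescaledpsi^r$ comes from (each of $\partial_\psi,\partial'_\psi$ carries a $\rescaledpsi$). The factor $(2r-1)!!$ I would explain as follows: the $r$ cap operations each cap \emph{one} boundary component, but caps come in pairs within a single $\partial_\psi$ or $\partial'_\psi$ (which caps \emph{two} distinct components at once)---so actually the $2r$ leaves/caps are paired up into $r$ pairs, and once the set of $2r$ capped boundary components is fixed by the graph, the number of ways to pair them into $r$ simultaneous-capping operations is $(2r-1)!!$; this is the combinatorial content of converting ``ordered sequence of cap operations'' into the unordered pairing data that the graph $\Gamma$ does \emph{not} record. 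The remaining minus sign: each connected component $\Gamma_c$ of the final surface contributes $-q_{0,-\chi(\Gamma_c)}$ rather than $+q_{0,-\chi(\Gamma_c)}$ because of the single global $-$ in front of $\partial_2$ together with the parity count of how many two-component gluings were used to build up a connected component from $v$ vertices, namely $v-1$ of them are genuinely connecting; I would verify $(-1)^{v-1}$ times whatever self-gluing signs appear collapses to $(-1)$ per component, using that $\partial_1$ and both capping operators carry no sign.

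After that, the rest is assembling: sum over isomorphism classes of ordered trivalent graphs $\Gamma$ with $2k$ vertices and $2r$ leaves (equivalently $r$ ``caps''), weight each by $(3k)!(2r-1)!!\rescaledpsi^r\prod_{\Gamma_c}-q_{0,-\chi(\Gamma_c)}$, and sum over $r$ from $0$ to $3k$. The stated formula then follows. The main obstacle I anticipate is the careful justification of the combinatorial factor $(2r-1)!!$ and especially the sign $(-1)$ per connected component: one must be scrupulous about the distinction between (a) the fully ordered, fully labeled data produced by literally iterating the atomic operators, (b) the ``ordered trivalent graph'' that the statement sums over, which remembers an ordering of edges but has forgotten the pairing structure of caps and has passed to isomorphism classes, and (c) the resulting honest surface. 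Getting the automorphism/overcounting factors to cancel exactly against the $(3k)!$ and $(2r-1)!!$, with the right sign, is the delicate part; I would handle it by the same ``sum over surfaces'' device used in the proof of Lemma~\ref{lemma: poli versus c-}, organizing the iteration as a sum over ordered perfect pairings with cylinder-or-caps labels and then grouping cap-labels into pairs, so that the bookkeeping is transparent and local to each connected component.
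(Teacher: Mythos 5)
Your overall strategy is the same as the paper's: interpret the iterated application of $\partial_{c-}$ to $q_{3,1}^{2k}$ as gluing and capping operations on a disjoint union of $2k$ pairs of pants (via Lemma~\ref{lem: intertwining surface operators}), encode the outcome as a trivalent graph with leaves, and recover $(3k)!$ from the ordering of the $3k$ operations, $(2r-1)!!$ from the pairing of the $2r$ leaves into $r$ simultaneous cappings (data the graph does not record), $\rescaledpsi^{r}$ from the $r$ cappings, and the sign from counting applications of $-\partial_2$. Those identifications all match the paper's proof.

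However, one step of your topological bookkeeping is wrong: you assert that gluing two boundary components of the \emph{same} connected surface decreases its Euler characteristic by $2$ (``adds a handle''). It does not. Gluing along boundary circles never changes the Euler characteristic, whether or not the two circles lie on the same component, since a circle has $\chi=0$: a pair of pants with two boundary circles identified is a one-holed torus, and both have $\chi=-1$. (The $-2$ you are remembering belongs to the disk-removal step of the usual ``attach a handle'' construction, which is absent here.) With your stated rule the final Euler characteristic of a term would be $2r-2k-2s$, where $s$ is the number of self-gluings, which contradicts the formula you then assert; the dumbbell configuration of Figure~\ref{figure: first and second} has two self-gluings yet still produces a closed genus-$2$ surface of $\chi=-2$. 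The correct accounting---and the one the paper uses---is that only the capping operators change the Euler characteristic, each by $+2$ (two disks), so a term with $r$ cappings has $\chi=-2k+2r$ regardless of how the gluings are distributed. Once this is repaired the rest of your argument goes through; I would only tighten the sign step by noting that a component with $v_c$ vertices acquires $(-1)^{v_c-1}$ (only $\partial_2$ merges components; $\partial_\psi$ caps two components without joining them), and that these factors collapse to one $(-1)$ per component only after using that the total vertex count $2k$ is even.
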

\begin{proof}
This essentially follows from Lemma~\ref{lem: intertwining surface operators}, which is written in terms of $\partial_{c+}$ and surfaces involves the evaluation at $\rescaledpsi=1$.
However, of the four constituent operators of the negative surface operator $\partial_{c-}$, only the two-component gluing operator changes the number of connected components (always reducing it by one) and only the capping operators change the Euler characteristic (always reducing it by two).
Therefore we can recover the overall sign of a term as well as its power of $\rescaledpsi$ from the combinatorics of the surface. 
Then isomorphism classes of closed orientable surfaces equipped with a fixed decomposition into pairs of pants and disks in which each connected component contains at least one pair of pants are in natural bijection with isomorphism classes of trivalent graphs with leaves---vertices correspond to pants and leaves to disks. 
The Euler characteristic of a surface made of gluing $2k$ pairs of pants and $2r$ disks is  $2r-2k$, while the Euler characteristic of a trivalent graph with $2k$ vertices and $2r$ leaves is $r-k$.
The coefficients $(3k)!$ and $(2r-1)!!$ come from choosing an order for the gluings and for choosing a perfect pairing between the caps for the capping operators.
\end{proof}
Recall the extended inverse pullback map,~\eqref{extended inverse pullback map}.
We want to apply this to our calculation of $\partial_{c-}^{3k}q_{3,1}^{2k}$ from Lemma~\ref{lem: calculation of dc- on the qs}.
The extended inverse pullback map takes the term $-q_{0,2n}$ corresponding to the connected component $\Gamma_c$ of Euler characteristic $-2n$ to the sum $-\rescaledkappa_n-\rescaledpsi^n$.
We will reorganize this application of the extended inverse pullback map into a summation over powers of $\rescaledpsi$, which will turn out eventually to have little reliance on the monomial in $\rescaledkappa_n$ variables.

For this purpose we introduce several generating functions so that we can perform the computation in formal series. See appendix~\ref{app:trees} for our conventions on graphs (in particular the definition of an \emph{ordered} graph).
\begin{dfn}
Let $\mathcal{G}_+(n,m)$ be the set of isomorphism classes of (possibly disconnected) trivalent ordered graphs with $n$ vertices and $m$ leaves, such that each connected component has positive Euler characteristic. 
Similarly define $\mathcal{G}_0(n,m)$ and $\mathcal{G}_-(n,m)$ with the indicated Euler characteristic restrictions on each connected component.

We use the superscripts $c$ and $\lf$ to further restrict to graphs that are connected and leaf free, respectively.

Let $G_+(x,y)$ denote the following generating function for $\mathcal{G}_+(n,m)$:
\[
G_+(x,y)=\sum_{m,n\ge 0}\#\mathcal{G}_+(n,m)\frac{x^n}{n!}y^m.
\]
Define $G_0(x,y)$, $G_-(x,y)$, and the connected variants similarly. 
The leaf-free variant is obtained from the general formula by evaluating $y=0$ and we will think of it as a single variable series instead.
\end{dfn}
So $\mathcal{G}_+$ consists of forests, $\mathcal{G}_0$ of disjoint unions of ``hairy loops'', and $\mathcal{G}_-$ of ``everything else''. 
We will also need to count another kind of tree.
\begin{dfn}\label{trees}
Let $\mathcal{T}_{rr}(n,m)$ be the set of isomorphism classes of ordered trivalent trees $T$ with $n$ trivalent vertices and $m+2$ leaves equipped with an ordered pair of distinct distinguished leaves, called the \emph{roots}.
Let $T_{rr}(x,y)$ denote the following generating function:
\[
T_{rr}(x,y)=\sum_{m,n\ge 0} \#\mathcal{T}_{rr}(n,m)\frac{x^n}{n!}y^{m}.
\]
\end{dfn}

Now we will combine these generating functions into a single generating function in five variables which will keep track of a complicated weighted count of graphs. 
The behavior of these series will give us a key to relate the desired quantity $\widehat{\pullbackmap}\ \partial^{3k}_{c-}(q^{2k}_{3,1})$ to the simpler expression
$\partial^{3k}_{-}(q^{2k}_{3,1})$ analyzed in Corollary~\ref{cor:fz1}.

\begin{dfn}
\label{definition: master series}
The \emph{master series} $\masterseries(x,y,z,w,u)$ is the following formal series:
\[
\masterseries(x,y,z,w,u)= \frac{e^{zT_{rr}(x,y)}}{e^{wG_0^c(x,y)}G_+(x,y)G^{\lf}_-(xu)}.
\]
\end{dfn}
The following technical lemma providing a kind of evaluation of the master series constitutes the promised key.
\begin{lem}
\label{lem: evaluation of master series}
Given a non-negative integer $n$, construct a one-variable series $\simplifiedmasterseries(x)$ by performing the following $\mathbb{R}$-linear substitution of monomials on $\masterseries$:
\begin{multline*}
x^{n_1}y^{n_2} z^{n_3} w^{n_4} u^{n_5} \mapsto 
\\\begin{cases}
0 & \!n_2\text{ odd};
\\
(n_2-1)!!n_3!\binom{\frac{3}{2}n_5+3n}{n_3} (3n_1+6n-3)^{n_4} x^{n_1}
 & \!n_2\text{ even.}
\end{cases}
\end{multline*}
Then $\simplifiedmasterseries$ is well-defined and equal to $1$ for all $n$.
\end{lem}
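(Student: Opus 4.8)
The plan is to set up a generating-function identity that mirrors the combinatorics of $\widehat{\pullbackmap}\circ \partial^{3k}_{c-}(q^{2k}_{3,1})$ and then show that the substitution in the statement collapses the five-variable master series $\masterseries$ to the constant $1$. The first step is to understand why the five auxiliary series $T_{rr}$, $G_0^c$, $G_+$, and $G^{\lf}_-$ are the right building blocks. By Lemma~\ref{lem: calculation of dc- on the qs} the expression $\partial_{c-}^{3k}q_{3,1}^{2k}$ is a weighted sum over ordered trivalent graphs with $2k$ vertices and $2r$ leaves, and the sign is $(-1)^{\text{number of }\partial_2\text{'s}}$; applying $\widehat{\pullbackmap}$ turns each connected component of Euler characteristic $-2n$ into $-\rescaledkappa_n-\rescaledpsi^n$. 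Expanding the product over components and collecting by the total power of $\rescaledpsi$ produces, for each monomial in the $\rescaledkappa$-variables, a count of graphs decorated by which connected components were ``replaced by'' a power of $\rescaledpsi$. A component replaced by $\rescaledpsi^n$ is exactly a connected trivalent graph of Euler characteristic $-2n$ — and these split by sign of Euler characteristic into forests ($G_+$), hairy loops ($G_0^c$), and the genus-$\ge 1$ remainder — while the rooted trees $T_{rr}$ encode how the leftover components attach. The master series packages precisely this bookkeeping: $x$ tracks vertices, $y$ tracks leaves, $z$ tracks $T_{rr}$-type pieces, $w$ tracks $G_0^c$-pieces, and $u$ provides the rescaling needed to match the binomial coefficient appearing in the extended inverse pullback~\eqref{extended inverse pullback map}.

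Next I would prove the substitution identity itself. Well-definedness is the easy half: the substitution only needs $\masterseries$ to have no odd powers of $y$ surviving and to be a genuine power series in $x$ after the substitution; this follows because each factor is a series in $x$ with polynomial-in-(the other variables) coefficients, and the substitution replaces $x^{n_1}y^{n_2}z^{n_3}w^{n_4}u^{n_5}$ by a scalar multiple of $x^{n_1}$, so finitely many monomials contribute to each $x$-power. For the claim that the result equals $1$, I would take logarithms. Since $\masterseries = \exp\bigl(zT_{rr} - wG_0^c - \log G_+ - \log G^{\lf}_-(xu)\bigr)$, and the substitution is $\mathbb{R}$-linear, it suffices to understand the substitution applied to $zT_{rr}$, $-wG_0^c$, $-\log G_+$, and $-\log G^{\lf}_-(xu)$ separately and then check that the substituted logarithm vanishes identically in $x$. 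Here the combinatorial content is the standard exponential formula: $\log G_+$ is the connected-forest generating function (a single tree), and the weights $n_3!$, the $(3n_1+6n-3)^{n_4}$, and the binomial $\binom{\frac32 n_5+3n}{n_3}$ are engineered so that, term by term in $x$, the tree-counting identities ``disconnected graph $=$ product over components'' and the standard relation between rooted and unrooted trees force a telescoping cancellation. Concretely, I expect the vanishing of the substituted $\log\masterseries$ to reduce to a single identity among the series $T_{rr}$, $G_+^c$, $G_0^c$, and $G_-^{c,\lf}$ — morally a statement that every connected trivalent graph is obtained uniquely from a tree by adding the appropriate ``loopy'' or ``leaf-free'' decoration — combined with the binomial identity $\sum_{n_3}\binom{N}{n_3}(\text{tree count}) = (\text{graph count})$ that is built into the definition of $\simplifiedmasterseries$.

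The main obstacle will be bookkeeping the four weights in the substitution simultaneously and verifying that they are exactly the ones produced by tracing through Lemma~\ref{lem: calculation of dc- on the qs} and the extended inverse pullback. In particular, matching the factor $(3n_1+6n-3)^{n_4}$ to the number of ways a $w$-marked hairy-loop component can be attached in the iterated gluing (this is a count of half-edges available at the moment of gluing, which depends on how many vertices and leaves have already been consumed) requires care, as does confirming that the $u$-variable rescaling reproduces the binomial coefficient from~\eqref{extended inverse pullback map} rather than some shifted version. A secondary subtlety is the parity constraint ``$n_2$ even'': one must check that every graph appearing with an odd number of leaves either does not occur (because $q_{3,1}^{2k}$ has an even leaf count and gluing preserves parity of leaves mod $2$ appropriately) or is killed by the $(n_2-1)!!$ double-factorial convention. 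I expect these to be genuine but routine verifications once the dictionary between the master series and the graph sum is fixed; the conceptual heart is the single cancellation in $\log\masterseries$, which I would isolate and prove first on the connected level before invoking the exponential formula.
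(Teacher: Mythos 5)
Your proposal has a fatal structural flaw and also never engages with the part of the lemma that is actually hard. The central step you propose --- writing $\masterseries=\exp\bigl(zT_{rr}-wG_0^c-\log G_+-\log G_-^{\lf}(xu)\bigr)$, applying the substitution to the exponent, and concluding that $\simplifiedmasterseries=1$ because the substituted logarithm vanishes --- is invalid, because the monomial substitution is only $\mathbb{R}$-linear and is \emph{not} a ring homomorphism. The weights $(n_2-1)!!$, $(3n_1+6n-3)^{n_4}$, and $n_3!\binom{\frac{3}{2}n_5+3n}{n_3}$ are not multiplicative in the exponents (e.g.\ $(a+b-1)!!\ne(a-1)!!(b-1)!!$, and the binomial couples $n_3$ to $n_5$ and $n$ across factors), so the substitution does not commute with products, and in particular ``substitute, then exponentiate'' is not ``exponentiate, then substitute.'' There is no reduction to a connected-level identity, and the bijective cancellation you hope for (``every connected trivalent graph is a decorated tree'') is not what the lemma asserts.

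The paper's proof is instead a concrete computation. It first obtains closed forms for every constituent series ($T_{rr}=Q^{-1}-1$, $G_0=Q^{-1/2}$, $G_+^c=(1-Q)^3(1+3Q)/(864x^2)$ with $Q=\sqrt{1-12xy}$, and $G_-^{\lf}(x)=\sum(6n-1)!!\,x^{2n}/(2n)!$), then evaluates the variables in a carefully chosen order: $z$ and $u$ are handled \emph{jointly} on the product $e^{zT_{rr}}/G_-^{\lf}(xu)$, where the weight $n_3!\binom{\frac32 n_5+3n}{n_3}$ is exactly what makes the binomial theorem collapse $\sum_j\binom{N}{j}(Q^{-1}-1)^j$ to $Q^{-N}$ (Lemma~\ref{lem: ev zu}); then $w$ gives $Q^{w/2}$ (Lemma~\ref{lem: ev w}); a decoupling argument (Corollary~\ref{cor: simplified evaluation}) reduces everything to showing that a certain substitution of $Q^{(3v-3)/2}/G_+(x,y)$ reproduces $G_-^{\lf}(x)$. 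That last identity is the genuine content: after the $(n_2-1)!!$ substitution in $y$ one must extract a diagonal coefficient, which the paper does via the Hautus--Klarner residue method, the explicit inverse $U(z)=2\cos(\frac23\cos^{-1}(6z))$, and a second-order ODE verification (Lemma~\ref{lem: diagonal coefficient}). None of this is ``routine verification,'' and your proposal offers no mechanism for producing the required identity $(2n_1-1)!!\bigl(\tfrac34\bigr)^{\frac{2n_1+1}{2}}\frac{(6n_1)!\,n_1!}{(3n_1)!(2n_1)!(2n_1)!}\frac{2}{\sqrt3}3^{-n_1}=\frac{(6n_1-1)!!}{(2n_1)!}$. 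Your remarks on well-definedness are essentially correct (each constituent series has no constant term in $x$ attached to the auxiliary variables, so only finitely many monomials feed each power of $x$), but that is the easy half of the lemma.
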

We will defer the proof of this lemma, which is a somewhat involved exercise in formal series, to Appendix~\ref{appendix: key lemma proof}, and meanwhile use it to prove the main theorem.
\subsection{Proof of the main theorem}
\label{sec: proof of main thm}
We are now ready to complete the proof of Theorem~\ref{thm:main}. As per the discussion at the beginning of the section, it is enough to show
\[
\widehat{\pullbackmap}\circ \partial^{3k}_{c-}(q^{2k}_{3,1})|_{\rescaledkappa_0=6k-4}= \partial^{3k}_{-}(q^{2k}_{3,1})
\]
for all $k$. 
The evaluation at $6k-4$ corresponds to the identity $\kappa_0=2g-2$ noted at the beginning of Section~\ref{section: FZ relations} and the assignment $g=3k-1$ of Example~\ref{example Faber Zagier A series}.

\begin{lem}\label{lem1}
Let $g=3k-1$ for $k \geq 1$. Then
\begin{multline*}\label{eq1}
\widehat{\pullbackmap}\circ\partial_{c-}^{3k}q_{3,1}^{2k}|_{\rescaledkappa_0=6k-4}
=
(3k)!\sum_{r=0}^{3k}(2r-1)!!  
\\\sum_{
\mathclap{
\chi(\Gamma_\rescaledkappa\sqcup \Gamma_0\sqcup \Gamma_{\pm})=r-k
}
}\rescaledpsi^{r-\chi(\Gamma_{\pm})}(3-6k)^{\#\pi_0(\Gamma_0)}(-1)^{\#\pi_0(\Gamma_\pm)}
\prod_{\Gamma_c\subset \Gamma_\rescaledkappa}-\rescaledkappa_{-\chi(\Gamma_c)},
\end{multline*}
where the summation on the second line is over isomorphism classes of ordered trivalent graphs of Euler characteristic $r-k$ with $2k$ vertices and $2r$ leaves equipped with a decomposition as a disjoint union into
\begin{enumerate}
\item a graph $\Gamma_\rescaledkappa$ all of whose connected components have negative Euler characteristic,
\item a graph $\Gamma_0$ all of whose connected components have zero Euler characteristic, and
\item a graph $\Gamma_{\pm}$ all of whose connected components have nonzero Euler characteristic
\end{enumerate}
and the product is over connected components of $\Gamma_{\rescaledkappa}$.
\end{lem}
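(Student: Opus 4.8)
\emph{Proof plan.} The plan is to start from the graph expansion of $\partial_{c-}^{3k}q_{3,1}^{2k}$ supplied by Lemma~\ref{lem: calculation of dc- on the qs}, apply the extended inverse pullback map $\widehat\pullbackmap$ to it term by term using its defining formula~\eqref{extended inverse pullback map}, expand the resulting product over connected components of each graph, and then reorganize the sum by recording, for each connected component, which of the two summands produced by $\widehat\pullbackmap$ was selected. The evaluation $\rescaledkappa_0=6k-4$ (which is $\kappa_0=2g-2$ for $g=3k-1$) is imposed last.

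First I would make two preliminary observations about the graph sum of Lemma~\ref{lem: calculation of dc- on the qs}. Since the \emph{non-extended} operator $\partial_{c-}$ is in play, a disk ($q_{1,-1}$) or a sphere ($q_{0,-2}$) is never produced: one checks directly from the formulas that none of $\partial_1$, $\partial_2$, $\partial_\psi$, $\partial'_\psi$ can output such a generator starting from generators of ${\Lambda_q}$. Consequently every connected component $\Gamma_c$ of an index graph $\Gamma$ represents a closed surface of nonpositive Euler characteristic, with $\chi(\Gamma_c)=0$ exactly when $\Gamma_c$ is a torus; in particular no component of positive Euler characteristic arises, so in the decomposition asserted in the statement the part $\Gamma_\pm$ has only negative-Euler-characteristic components, and we keep the $\pm$ notation only for uniformity with later (extended) versions. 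Applying $\widehat\pullbackmap$ and using the formula $q_{0,2n}\mapsto\rescaledkappa_n+\rescaledpsi^n$, the factor attached to a connected component $\Gamma_c$ becomes the two-term expression $-\rescaledkappa_{-\chi(\Gamma_c)}-\rescaledpsi^{-\chi(\Gamma_c)}$ (here and below $\chi$ is the graph-theoretic Euler characteristic as in the present statement, so $-\chi(\Gamma_c)$ is a nonnegative integer which vanishes exactly for torus components).

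The heart of the argument is the expansion of the product over connected components: expanding amounts to choosing, at each component $\Gamma_c$, either the ``$\rescaledkappa$-summand'' $-\rescaledkappa_{-\chi(\Gamma_c)}$ or the ``$\rescaledpsi$-summand'' $-\rescaledpsi^{-\chi(\Gamma_c)}$. I would record this choice by sorting the components of $\Gamma$ into three subgraphs: $\Gamma_\rescaledkappa$, the negative-Euler-characteristic components at which the $\rescaledkappa$-summand was chosen; $\Gamma_\pm$, the nonzero-Euler-characteristic components at which the $\rescaledpsi$-summand was chosen; and $\Gamma_0$, all torus components. The components of $\Gamma_\rescaledkappa$ jointly contribute $\prod_{\Gamma_c\subset\Gamma_\rescaledkappa}(-\rescaledkappa_{-\chi(\Gamma_c)})$; the components of $\Gamma_\pm$ contribute the sign $(-1)^{\#\pi_0(\Gamma_\pm)}$ together with the $\rescaledpsi$-power $\sum_{\Gamma_c\subset\Gamma_\pm}(-\chi(\Gamma_c))=-\chi(\Gamma_\pm)$, which combines with the overall $\rescaledpsi^r$ from Lemma~\ref{lem: calculation of dc- on the qs} to give $\rescaledpsi^{\,r-\chi(\Gamma_\pm)}$. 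Next I would observe that $\rescaledkappa_0$ enters $\widehat\pullbackmap\circ\partial_{c-}^{3k}q_{3,1}^{2k}$ only through torus components, so the substitution $\rescaledkappa_0=6k-4$ touches nothing else; a torus component contributes $-q_{0,0}\mapsto-(\rescaledkappa_0+1)$, and since \emph{both} choices of summand are summed over there, its total contribution is $-\rescaledkappa_0-1$, which after the evaluation becomes $-(6k-4)-1=3-6k$ ($=1-2g$ for $g=3k-1$, matching $\kappa_0=2g-2$). Hence the torus components contribute exactly $(3-6k)^{\#\pi_0(\Gamma_0)}$, carrying no $\rescaledkappa$-variable and no $\rescaledpsi$-power. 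Finally, the data of a graph $\Gamma$ (with $2k$ vertices and $2r$ leaves, of the correct Euler characteristic) together with the above choices at its components is precisely the data of $\Gamma$ together with a disjoint-union decomposition $\Gamma=\Gamma_\rescaledkappa\sqcup\Gamma_0\sqcup\Gamma_\pm$ of the stated type, so assembling the three contributions with the unchanged prefactors $(3k)!$ and $(2r-1)!!$ yields the claimed formula.

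This proof is essentially a bookkeeping exercise. The one step requiring genuine thought is the verification that the two expansion terms at a torus component collapse, after the evaluation $\rescaledkappa_0=6k-4$, into the single uniform scalar $3-6k$: this is what allows torus components to be packaged as an ``inert'' factor $(3-6k)^{\#\pi_0(\Gamma_0)}$ rather than contributing $\rescaledkappa$-variables or $\rescaledpsi$-powers. Everything else amounts to tracking signs, powers of $\rescaledpsi$, and keeping the graph-theoretic Euler-characteristic conventions straight.
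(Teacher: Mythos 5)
Your bookkeeping for the components of negative and zero Euler characteristic is correct and follows the same route as the paper's proof: expand the product $\prod\left(-\rescaledkappa_{-\chi(\Gamma_c)}-\rescaledpsi^{-\chi(\Gamma_c)}\right)$ by choosing a summand at each component, sort the components accordingly, and observe that a torus component contributes $-(\rescaledkappa_0+1)=3-6k$ after the evaluation. The problem is your preliminary observation that ``a disk or a sphere is never produced.'' Disks indeed cannot survive to the final expression (each application of $\partial_{c-}$ lowers the total boundary count by exactly $2$, so a leftover $q_{1,-1}$ would leave a positive residual count), but spheres do occur: the two-component capping operator $\partial_\psi$ turns a pair of pants $q_{3,1}$ into an annulus $q_{2,0}$, and the one-component capping operator $\partial'_\psi$ then caps both ends of that annulus to produce $q_{0,-2}$. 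This is exactly why Lemma~\ref{lem: calculation of dc- on the qs} --- your own starting point --- sums over all trivalent graphs with no restriction on the Euler characteristic of components, why the statement you are proving allows $\Gamma_{\pm}$ to contain components of positive Euler characteristic, and why the subsequent Lemma~\ref{lem: insertion tree simplification lemma} and the master series carry a forest-counting factor $G_+$.

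The gap this opens is concrete. A sphere component is sent by $\widehat{\pullbackmap}$ to $\rescaledkappa_{-1}+\rescaledpsi^{-1}$, so your expansion produces, besides the $-\rescaledpsi^{-1}=-\rescaledpsi^{-\chi}$ terms that the stated formula records in $\Gamma_{\pm}$, additional $-\rescaledkappa_{-1}$ terms that appear nowhere on the right-hand side; one must invoke the convention that $\rescaledkappa_{-1}$ evaluates to zero (part of the evaluation of $\widehat{\Lambda_\rescaledkappa}$ into $R^*(\CC_g)$) to kill them, which the paper's proof does explicitly and yours does not. Conversely, if you insist that spheres never arise, then the right-hand side of the lemma contains genuinely extra nonzero summands --- graphs with tree components placed in $\Gamma_{\pm}$ --- with no counterpart on the left, and what you would be proving is a different, strictly smaller sum. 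Either way, the positive-Euler-characteristic case has to be addressed rather than dismissed; with that one case handled as in the paper, the rest of your argument goes through.
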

\begin{proof}
By Lemma~\ref{lem: calculation of dc- on the qs}, we can write $\partial^{3k}_{c-}(q^{2k}_{3,1})$ as a sum over graphs.
Then applying $\widehat{\pullbackmap}$ corresponds, for such a graph $\Gamma$, to choosing some subgraph $\Gamma_\rescaledkappa$ consisting of a collection of connected components of $\Gamma$ to evaluate via $q_{0,2n}\mapsto\rescaledkappa_n$ and the complementary subgraph $\Gamma\setminus \Gamma_\rescaledkappa$ to evaluate via $q_{0,2n}\mapsto \rescaledpsi^n$. 
This needs a little modification because of the exceptional values when $n=-1$ and $n=0$. 
That is, when $n=-1$, we evaluate $\rescaledpsi^{-1}+\rescaledkappa_{-1}$ to $\rescaledpsi^{-1}$ and when $n=0$ we evaluate $\rescaledpsi^{0}+\rescaledkappa_0$ to $6n-3$ as described above.
So we should separate out cases according to the sign of the Euler characteristic of connected components, and only include connected components with negative Euler characteristic in $\Gamma_\rescaledkappa$.
\end{proof}

We would like to more or less ``hold $\Gamma_\rescaledkappa$ fixed'' in Lemma \ref{lem1} and show that the coefficients are individually zero except for the trivial power of $\rescaledpsi$, i.e., the case $r-\chi(\Gamma_{\pm})=0$. 
But there is a relationship between $\Gamma_\rescaledkappa$ and the summation index $r$ which makes this slightly awkward.
To get around this issue, we will use the decomposition of Appendix~\ref{app:trees}.
This decomposition starts from a conneced trivalent graph $\Gamma$ of negative Euler characteristic and outputs a connected leaf-free trivalent graph of the same Euler characteristic, called the \emph{core} of $\Gamma$, along with a collection of trivalent doubly rooted trees, the \emph{insertion forest}.

\begin{lem}
\label{lem: insertion tree simplification lemma}
Let $g=3k+1$ for $k\ge 1$. Then
\begin{multline*}
\widehat{\pullbackmap}\circ\partial_{c-}^{3k}q_{3,1}^{2k}
=
(3k)!
\sum_{\Gamma_\rescaledkappa'}\left(\prod_{\Gamma_c'\subset \Gamma_\rescaledkappa'} -\rescaledkappa_{-\chi(\Gamma_c)}\right)
\sum_{r=0}^{3k}(2r-1)!!
\\
\sum_{\mathclap{\Gamma_-',\Gamma_0,\Gamma_+,\Gamma_{\ins}\subset \Gamma}}\rescaledpsi^{r-\chi(\Gamma_+)-\chi(\Gamma_-')}\binom{\frac{3}{2}(\#V_\rescaledkappa'+\#V_-')  }{\#\pi_0(\Gamma_{\ins})}(3-6k)^{\#\pi_0(\Gamma_0)}(-1)^{\#\pi_0(\Gamma_-'\sqcup \Gamma_+)}
\end{multline*}
where
\begin{enumerate}
\item $\Gamma_\rescaledkappa'$ and $\Gamma_-'$ vary over leaf-free ordered trivalent graphs, 
\item $\Gamma_0$ varies over ordered trivalent graphs where every component has Euler characteristic zero, 
\item $\Gamma_+$ varies over ordered trivalent forests, 
\item $\Gamma_{\ins}$ varies over ordered trivalent forests where each connected component is given an ordered pair of distinct roots, such that the total number of vertices of all five graphs is $2k$ and the total number of non-root leaves is $2r$, and
\item all of the orders are subordinate to an order on the disjoint union $\Gamma$ of all of the five graphs.
\end{enumerate}
\end{lem}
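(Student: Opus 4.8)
The plan is to feed the graph sum of Lemma~\ref{lem1} into the core/insertion-forest decomposition of Appendix~\ref{app:trees}, one connected component at a time, and then repackage the output.

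First I would refine the index set of Lemma~\ref{lem1}. A connected trivalent graph has positive Euler characteristic precisely when it is a tree, so I would split $\Gamma_{\pm}=\Gamma_{+}\sqcup\Gamma_{-}$ into the forest part $\Gamma_{+}$ (every component a tree, $\chi=1$) and the part $\Gamma_{-}$ whose components all have negative Euler characteristic. By additivity of $\chi$ this turns the exponent $r-\chi(\Gamma_{\pm})$ into $r-\chi(\Gamma_{+})-\chi(\Gamma_{-})$ and rewrites the sign as $(-1)^{\#\pi_0(\Gamma_{+})+\#\pi_0(\Gamma_{-})}$, with $\Gamma_0$, the constant $(3-6k)^{\#\pi_0(\Gamma_0)}$, and the prefactors $(3k)!$ and $(2r-1)!!$ untouched.

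Next I would apply the decomposition of Appendix~\ref{app:trees} to every connected component of $\Gamma_{\rescaledkappa}$ and of $\Gamma_{-}$: each such component becomes a connected leaf-free core of the same Euler characteristic, together with an ordered forest of doubly rooted insertion trees. Pooling the cores coming from $\Gamma_{\rescaledkappa}$ gives a leaf-free graph $\Gamma_{\rescaledkappa}'$, pooling those from $\Gamma_{-}$ gives $\Gamma_{-}'$, and pooling all the insertion trees gives one ordered doubly rooted forest $\Gamma_{\ins}$. Since inserting trees preserves Euler characteristic, $\chi(\Gamma_{-})=\chi(\Gamma_{-}')$, each $\rescaledkappa$-factor $-\rescaledkappa_{-\chi(\Gamma_c)}$ is unchanged when $\Gamma_c$ is replaced by its core, and $\#\pi_0(\Gamma_{-})=\#\pi_0(\Gamma_{-}')$, so the sign becomes $(-1)^{\#\pi_0(\Gamma_{-}'\sqcup\Gamma_{+})}$. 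The only genuinely new feature is the reconstruction count: to recover $\Gamma$ from a fixed quintuple $(\Gamma_{\rescaledkappa}',\Gamma_0,\Gamma_{+},\Gamma_{-}',\Gamma_{\ins})$ one inserts the trees of $\Gamma_{\ins}$ along edges of $\Gamma_{\rescaledkappa}'\sqcup\Gamma_{-}'$. Appendix~\ref{app:trees} counts such insertions per core component; since a leaf-free trivalent graph on $\#V$ vertices has $\tfrac32\#V$ edges, summing over how the $\#\pi_0(\Gamma_{\ins})$ trees are distributed among all core components (a Vandermonde convolution) collapses the product of per-component counts into the single factor $\binom{\tfrac32(\#V_{\rescaledkappa}'+\#V_{-}')}{\#\pi_0(\Gamma_{\ins})}$. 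I would also confirm that the orders on the five pieces are exactly the restrictions of a single order on their disjoint union $\Gamma$ (the ``subordinate'' condition), and that the vertex and leaf bookkeeping yields $2k$ vertices and $2r$ non-root leaves in total, because cores are leaf-free and the non-root leaves of $\Gamma_{\ins}$ are precisely the leaves introduced into the reconstructed negative components. Assembling these identifications produces the displayed formula.

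The main obstacle is the combinatorial heart of the third step: pinning down, with the exact ordering and isomorphism conventions of Appendix~\ref{app:trees}, that the reconstruction of $\Gamma$ from its core and insertion data is counted by $\binom{\tfrac32(\#V_{\rescaledkappa}'+\#V_{-}')}{\#\pi_0(\Gamma_{\ins})}$, and verifying that pooling the per-component insertion forests into one $\Gamma_{\ins}$ turns the product of per-component counts into this single binomial via Vandermonde. Once the appendix supplies the per-component statement, everything else is a transfer of the formula of Lemma~\ref{lem1} across the resulting bijection.
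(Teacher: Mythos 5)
Your proposal matches the paper's proof essentially step for step: split $\Gamma_\pm$ into its tree part $\Gamma_+$ and negative-Euler-characteristic part $\Gamma_-$, apply the core/insertion-forest decomposition of Appendix~\ref{app:trees} to $\Gamma_\rescaledkappa$ and $\Gamma_-$, pool the insertion trees into a single $\Gamma_{\ins}$, and count reconstructions by the binomial coefficient, using $3V=2E$ for leaf-free trivalent cores to rewrite the edge count as $\tfrac32$ the vertex count. The only cosmetic difference is that you obtain the binomial via a Vandermonde convolution of per-component counts, whereas the paper counts insertions into the edges of the disjoint union of cores directly; both yield $\binom{\frac32(\#V_\rescaledkappa'+\#V_-')}{\#\pi_0(\Gamma_{\ins})}$.
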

\begin{proof}
Beginning with the situation of Lemma~\ref{lem1}, we can further decompose $\Gamma_{\pm}$ into $\Gamma_-$ (its components with negative Euler characteristic) and $\Gamma_+$ (its components with positive Euler characteristic), and perform the reduction procedure of the appendix on $\Gamma_c$ and $\Gamma_-$. 
Then we have cores $\Gamma_\rescaledkappa'$ of $\Gamma_\rescaledkappa$ and $\Gamma_-'$ of $\Gamma_-)$ and insertion forests for them. 
If we like, we can think of a single insertion forest $\Gamma_{\ins}$ for the disjoint union $\Gamma_{\rescaledkappa}'\sqcup \Gamma_-'$.
Then the number of graphs with insertion forest $\Gamma_{\ins}$ and core $\Gamma'$ is $\binom{E(\Gamma')}{\#\pi_0(\Gamma_{\ins})}$.
Since the core is leaf-free and trivalent, we have $3V=2E$, so we can write the edge count as $\frac{3}{2}$ the vertex count.
\end{proof}

Now given the equation of Lemma~\ref{lem: insertion tree simplification lemma}, write $n_1$ for the total number of vertices of $\Gamma_-'\sqcup \Gamma_0\sqcup \Gamma_+\sqcup \Gamma_{\ins}$ and write $2n$ for the total number of vertices of $\Gamma_\rescaledkappa'$. 
Note that $\chi(\Gamma_+)+\chi(\Gamma_-')=n_1-r$. 
Another simplification comes from noticing that $r\le 3k$ is not a necessary constraint to specify because it follows automatically from graph combinatorics, which can be seen as follows.
Leaf-free graphs, connected trivalent graphs of Euler characteristic zero, and doubly rooted trees all have at least as many vertices as non-root leaves. 
On the other hand, a tree has $2$ more leaves than vertices.
Then for a fixed $k$, the maximum number of non-root leaves that can occur with a set of graphs $(\Gamma_\rescaledkappa',\Gamma_-',\Gamma_0,\Gamma_+,\Gamma_{\ins})$ as above is when $\Gamma_{\ins}$ has $2k$ connected components, each a single vertex. 
In this case there are $6k=2(3k)$ leaves so $r>3k$ is not possible.

Then we can separate out a factor of $(3k)!\binom{2k}{2n}\sum\prod -\rescaledkappa_{-\chi(\Gamma_c)}$ where the binomial coefficient comes from choosing the which $2n$ vertices of the $2k$ lie in $\Gamma_{\rescaledkappa}'$. The coefficient of this term is then:
\begin{multline*}
\sum_{r=0}^{\infty}(2r-1)!!
\sum_{{\Gamma_-',\Gamma_0,\Gamma_+,\Gamma_{\ins}}}
\binom{n_1}{\#V_-',\#V_0,\#V_+,\#V_{\ins}}\rescaledpsi^{n_1}\binom{3n+\frac{3}{2}\#V_-'}{\#\pi_0(\Gamma_{\ins})}\cdot
\\(3-3(n_1+2n))^{\#\pi_0(\Gamma_0)}(-1)^{\#\pi_0(\Gamma_-'\sqcup \Gamma_+)}
.\end{multline*}
Now the sum is being taken over tuples of ordered trivalent graphs $(\Gamma_-',\Gamma_0,\Gamma_+,\Gamma_{\ins})$ with Euler characteristic restrictions and extra root data for $\Gamma_{\ins}$ but without any ambient graph $\Gamma$.
This sum in turn is obtained from the formal series in $x$, $y$, $z$, $w$, and $u$
\begin{equation}
\label{graphed master poly equation}
\sum_{\Gamma_-',\Gamma_0,\Gamma_+,\Gamma_{\ins}} \frac{n_1!(-1)^{\#\pi_0(\Gamma_-'\sqcup \Gamma_0\sqcup\Gamma_+)}}{\#V_-'!\#V_0!\#V_+!\#V_{\ins}!}x^{n_1}y^{2r}z^{\#\pi_0(\Gamma_{\ins})}w^{\#\pi_0(\Gamma_0)}u^{\#V_-'}
\end{equation}
by linearly replacing $x^{n_1}y^{n_2}z^{n_3}w^{n_4}u^{n_5}$ as in Lemma~\ref{lem: evaluation of master series} and evaluating at $x=\rescaledpsi$.

\begin{lem}\label{lem3}
The coefficients of $x^{n_1}y^{n_2}z^{n_3}w^{n_4}u^{n_5}$ in~\eqref{graphed master poly equation} and in the master series $\masterseries(x,y,z,w,u)$ of Definition~\ref{definition: master series} agree up to a nonzero scalar multiple depending only on $n_1$ which is $1$ for $n_1=0$.
\end{lem}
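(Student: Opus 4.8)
The plan is to identify, monomial by monomial, the series~\eqref{graphed master poly equation} with $n_1!$ times the corresponding coefficient of the master series $\masterseries$; the scalar $n_1!$ is precisely the mismatch between the $\frac{x^{n_1}}{n_1!}$ normalization built into all of our graph generating functions and the plain monomial $x^{n_1}$ occurring in~\eqref{graphed master poly equation}. The only tool needed is the exponential formula (see Appendix~\ref{app:series}) in the following shape: if a class of ordered trivalent graphs is closed under disjoint union and $C(x,y,\dots)$ denotes the generating function of its connected members---exponential in the vertex variable $x$ and ordinary in the remaining variables---then $e^{vC(x,y,\dots)}$ is the generating function of arbitrary disjoint unions of such graphs, with an auxiliary variable $v$ marking the number of connected components.

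First I would rewrite $\masterseries$ as a product of exponentials of connected-graph generating functions. Since $\mathcal{G}_+$ is the class of forests, the exponential formula gives $G_+ = e^{G_+^c}$ with $G_+^c$ enumerating trees; and since $\mathcal{G}^{\lf}_-$ is (by the conventions of Appendix~\ref{app:trees}) the class of disjoint unions of connected leaf-free trivalent graphs---each of which has negative Euler characteristic---we likewise get $G^{\lf}_- = e^{G^{\lf,c}_-}$. Hence
\[
\masterseries(x,y,z,w,u) = e^{zT_{rr}(x,y)}\cdot e^{-wG_0^c(x,y)}\cdot e^{-G_+^c(x,y)}\cdot e^{-G^{\lf,c}_-(xu)}.
\]
Reading off the four factors by the exponential formula, with $x$ marking the total number of trivalent vertices and $y$ the total number of non-root leaves: $e^{zT_{rr}(x,y)}$ enumerates disjoint unions of doubly rooted trivalent trees with $z$ marking the number of components (the datum $\Gamma_{\ins}$, contributing $z^{\#\pi_0(\Gamma_{\ins})}$ and no sign); $e^{-wG_0^c(x,y)}$ enumerates disjoint unions of connected Euler-characteristic-zero trivalent graphs, each component weighted by $-w$ (the datum $\Gamma_0$, contributing $(-1)^{\#\pi_0(\Gamma_0)}w^{\#\pi_0(\Gamma_0)}$); $e^{-G_+^c(x,y)}$ enumerates forests with each component weighted by $-1$ (the datum $\Gamma_+$, contributing $(-1)^{\#\pi_0(\Gamma_+)}$); and $e^{-G^{\lf,c}_-(xu)}$ enumerates disjoint unions of connected leaf-free trivalent graphs, each weighted by $-1$, the substitution $x\mapsto xu$ turning the vertex-marking into the variable $u$ (the datum $\Gamma_-'$, contributing $(-1)^{\#\pi_0(\Gamma_-')}u^{\#V_-'}$).

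Multiplying the four series and using the product rule for exponential generating functions in the vertex variable $x$, the coefficient of $x^{n_1}y^{n_2}z^{n_3}w^{n_4}u^{n_5}$ in $\masterseries$ is a sum over the four-tuples $(\Gamma_-',\Gamma_0,\Gamma_+,\Gamma_{\ins})$ indexing~\eqref{graphed master poly equation}---with vertices totalling $n_1$, non-root leaves totalling $n_2$, and $\#\pi_0(\Gamma_{\ins})=n_3$, $\#\pi_0(\Gamma_0)=n_4$, $\#V_-'=n_5$---of the quantity
\[
\frac{(-1)^{\#\pi_0(\Gamma_-'\sqcup\Gamma_0\sqcup\Gamma_+)}}{\#V_-'!\,\#V_0!\,\#V_+!\,\#V_{\ins}!}
\]
(the convolution in the ordinary variable $y$ being already accounted for by distributing the $n_2$ non-root leaves among the four graphs). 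Multiplying through by $n_1!$ turns the denominator into the multinomial coefficient $\binom{n_1}{\#V_-',\#V_0,\#V_+,\#V_{\ins}}$---the number of ways to interleave the four vertex-orders into one order on the disjoint union, exactly as in hypothesis~(5) of Lemma~\ref{lem: insertion tree simplification lemma}---which is the weight appearing in~\eqref{graphed master poly equation}. Thus the coefficient of $x^{n_1}y^{n_2}z^{n_3}w^{n_4}u^{n_5}$ in~\eqref{graphed master poly equation} equals $n_1!$ times that of $\masterseries$; since $n_1!$ is nonzero, depends only on $n_1$, and is $1$ when $n_1=0$, this is the assertion.

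The step requiring the most care is the exact attribution of signs and of the marking variables $z$, $w$, $u$: one must check that the combinatorial class of four-tuples of ordered trivalent graphs of the prescribed Euler-characteristic types, with $z$, $w$, $u$ marking $\#\pi_0(\Gamma_{\ins})$, $\#\pi_0(\Gamma_0)$, $\#V_-'$ and with global sign $(-1)^{\#\pi_0(\Gamma_-'\sqcup\Gamma_0\sqcup\Gamma_+)}$, really is the one whose generating function is $\masterseries$. This amounts to the identities $G_+=e^{G_+^c}$ and $G^{\lf}_-=e^{G^{\lf,c}_-}$, the $z$- and $w$-decorated exponential formula applied to $\Gamma_{\ins}$ and $\Gamma_0$, and the remark that no empty or non-negative-Euler-characteristic component can occur inside $\Gamma_-'$, so that $\frac{1}{G^{\lf}_-(xu)}$ neither over- nor under-counts.
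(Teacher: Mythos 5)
Your proof is correct and takes essentially the same approach as the paper's: both use the exponential formula to factor the four-fold sum over $(\Gamma_-',\Gamma_0,\Gamma_+,\Gamma_{\ins})$ into the product $e^{zT_{rr}}e^{-wG_0^c}e^{-G_+^c}e^{-G_-^{\lf,c}(xu)}$ and identify the discrepancy with~\eqref{graphed master poly equation} as the factor $n_1!$ converting the exponential normalization into the multinomial coefficient. You are merely more explicit about the identities $G_+=e^{G_+^c}$ and $G_-^{\lf}=e^{G_-^{\lf,c}}$, which the paper's proof leaves implicit.
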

\begin{proof}
Rescale the coefficients of~\eqref{graphed master poly equation} by dividing by $n_1!$. 
Then we can decompose the rescaled series, using the fact that $\Gamma_-'$ has no leaves, $\Gamma_0$ and $\Gamma_{\ins}$ have as many non-root leaves as vertices, and $\Gamma_+$ has two leaves more than its number of vertices:
\begin{multline*}
\sum_{\Gamma_-',\Gamma_0,\Gamma_+,\Gamma_{\ins}} 
\frac{(-1)^{\#\pi_0(\Gamma_-'\sqcup \Gamma_0\sqcup\Gamma_+)}}{\#V_-'!\#V_0!\#V_+!\#V_{\ins}!}
x^{n_1}y^{2r}z^{\#\pi_0(\Gamma_{\ins})}w^{\#\pi_0(\Gamma_0)}u^{\#V_-'}
\\
=
\sum_{\Gamma_-'}\frac{(-1)^{\#\pi_0(\Gamma_-')}}{\#V_-'!}(xu)^{\#V_-'}
\sum_{\Gamma_0}\frac{(-w)^{\#\pi_0(\Gamma_0)}}{\#V_0!}(xy)^{\#V_0}
\\
\sum_{\Gamma_+}\frac{(-y^2)^{\#\pi_0(\Gamma_+)}}{\#V_+!}(xy)^{\#V_+}
\sum_{\Gamma_{\ins}} \frac{z^{\#V_{\ins}}}{\#V_{\ins}!}(xy)^{\#V_{\ins}}.
\end{multline*}
These are all exponential generating functions for the appropriate types of ordered trivalent graphs, so this product is
\[
\exp(-G_-^{\lf,c}(xu))\exp(-wG_0^c(x,y))\exp(-G_+^c(x,y))\exp(zT_{rr}(x,y)),\ 
 \] 
which is the master series.
\end{proof}
\begin{proof}[Proof of Theorem~\ref{thm:main}]
By Lemma~\ref{lem: evaluation of master series}, for any $n$ the coefficient of $x^{n_1}$ in $\simplifiedmasterseries(x)$ is $1$ if $n_1=0$ and $0$ otherwise.
Then by Lemma~\ref{lem3}, the same is true for the ``evaluation'' \`a la Lemma~\ref{lem: evaluation of master series} of the expression~\eqref{graphed master poly equation}.
This implies that for each choice of $\Gamma'_{\rescaledkappa}$, the coefficient in $\widehat{\pullbackmap}\circ\partial_{c-}^{3k}q_{3,1}^{2k}|_{\rescaledkappa_0=6k-4}$ of the corresponding expression in $\rescaledkappa_j$ variables has no dependence on $\rescaledpsi$.

But how does the $\rescaledpsi^0$ term of the expression $\widehat{\pullbackmap}\circ\partial_{c-}^{3k}q_{3,1}^{2k}|_{\rescaledkappa_0=6k-4}$ differ from $\partial_{-}^{3k}q_{3,1}^{2k}$?
The operator $\partial_{c-}$ differs from $\partial_-$ by terms with a coefficient of $\rescaledpsi$. 
On the other hand, the operator $\widehat{\pullbackmap}$ differs from the identity by terms with positive powers of $\rescaledpsi$ and special terms with power $\rescaledpsi^0$ for $q_{0,0}$ and $\rescaledpsi^{-1}$ for $q_{0,-2}$. 
But to arrive at $q_{0,0}$ we must have a surface of genus $1$, implying at least one cap, and to arrive at $q_{0,-2}$ we must have a surface of genus $0$, implying at least three caps. 
Since caps only arise (in pairs) from the application of $\partial_\psi$ and $\partial'_\psi$, this means that for terms containing $q_{0,0}^iq_{0,-2}^j$ there must have been at least $\frac{3j+i}{2}$ such operators, yielding a total power of $\rescaledpsi$ of at least $\frac{3j+i}{2}-j$. This is greater than zero unless $i=j=0$.
But in the $i=j=0$ case we are looking at summands where we have applied only $\partial_-$ and then avoided the two special cases of $\widehat{\pullbackmap}$. 
Then the coefficient of $\rescaledpsi^0$ in $\widehat{\pullbackmap}\circ\partial_{c-}^{3k}q_{3,1}^{2k}|_{\rescaledkappa_0=6k-4}$ is exactly $\partial_{-}^{3k}q_{3,1}^{2k}$.
\end{proof}

\section{Final remarks}

The following conjecture was proposed by Yin:

\begin{conj}
Every relation in the tautological ring of $\CC_g$ comes from a relation on the universal Jacobian $\J_g$.
\end{conj}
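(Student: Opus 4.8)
By Remark~\ref{jacobian_curve} the pullback $\pi^*$ identifies $R^*(\CC_g)$ with the summand $\bigoplus_{i=0}^{3g-2}R^*_{(0,2i)}(\J_g)$ of $R^*(\J_g)$, so a relation in $R^*(\CC_g)$ is precisely a relation among the tautological classes of $\J_g$ that lie in that summand. Following~\cite{Y1}, the phrase ``comes from a relation on $\J_g$'' should be read as: lies in the ideal obtained from the known (``geometric'') tautological relations on $R^*(\J_g)$ by repeatedly applying the Polishchuk operator $\DD$; this is legitimate because, by Remark~\ref{rem: relation to geometric operator}, $\DD$ preserves rational equivalence, so every element of this ideal is a genuine relation. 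The plan is therefore threefold: (i) fix a generating set for the known relations on $R^*(\J_g)$ (the vanishing $p_{3,1}^{2k}=0$ used in Theorem~\ref{thm:main} being a sample member); (ii) understand the restriction to $R^*(\CC_g)$ of the $\DD$-closure of this set; and (iii) show that this restriction accounts for every tautological relation on $\CC_g$.

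The natural bridge for step (iii) is the Faber--Zagier relations. By~\cite{PP} every Faber--Zagier relation $[\exp(-\gamma)]_{t^n\mathbf{p}^\sigma}=0$ holds in $R^*(\CM_g)$, and conjecturally these generate all relations; granting this, the conjecture reduces to proving that \emph{each} Faber--Zagier relation is produced from $\J_g$ by the method of (i)--(ii). Theorem~\ref{thm:main} does exactly this for the empty partition, via the vanishing of $p_{3,1}^{2k}$ and the surface and gluing operators of Section~\ref{section: lift}. For a general partition $\sigma$ with parts not congruent to $2$ modulo $3$, the plan would be to guess a corresponding monomial $m_\sigma$ in the classes $p_{i,j}$ --- its shape being dictated by $\mathbf{p}^\sigma$ and by the coefficients $\alpha_n(\sigma)$ appearing in $\log\Psi(t,\mathbf{p})$ --- apply the negative surface operator the appropriate number of times, and rerun the enumerative combinatorics of Sections~\ref{section: lift} and~\ref{section: relating c and psi} in this more general setting. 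Concretely one would build a multivariable analogue of the master series of Definition~\ref{definition: master series}, in which the trivalent graphs carry additional decorations recording the parts of $\sigma$, prove the corresponding analogue of Lemma~\ref{lem: evaluation of master series} by the same formal-series and Lagrange-inversion techniques, and check that the resulting weighted graph count matches $\log\Psi(t,\mathbf{p})$ rather than merely $\log(A)$.

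The obstacles split accordingly. The technical one --- the multivariable version of Lemma~\ref{lem: evaluation of master series}, the identification of the monomials $m_\sigma$, and the corresponding generalization of the $\phi$-cancellation argument of Section~\ref{section: relating c and psi} --- is a more intricate but, one expects, qualitatively similar exercise, and is the part of the program most likely to be completed along the present lines. The genuine obstruction is step (iii): the completeness of the Faber--Zagier relations is itself open, so this route can at best establish the conditional statement that Yin's conjecture follows from Faber--Zagier completeness. Removing that hypothesis would require an \emph{intrinsic} argument that the $\DD$-closure of the known relations on $\J_g$, restricted to $R^*(\CC_g)$, already exhausts all relations --- for instance via an exact dimension count for $R^*(\CC_g)$. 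Any such count, however, must contend with the known failure of the naive Gorenstein prediction for tautological rings, and is precisely the hard open core of the question; the realistic near-term target is the conditional statement, with the computational verification for $g\le 19$ recorded in Remark~\ref{jacobian_curve} as supporting evidence.
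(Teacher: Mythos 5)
The statement you are asked to prove is labeled a \emph{conjecture} in the paper (it is Yin's conjecture, with the precise formulation deferred to \cite[Conjecture 3.19]{Y1}), and the paper offers no proof of it. The only thing actually established in the paper is Theorem~\ref{thm:main}, a single instance: the top Faber--Zagier relation (empty partition, $g=3k-1$) is produced from the vanishing of $p_{3,1}^{2k}$ on $\J_g$. So there is no ``paper proof'' to compare against, and your text, by its own admission, is not a proof either: it is a research program. That is the genuine gap, and it has two layers. First, even the unconditional part of your plan --- producing \emph{every} Faber--Zagier relation from the Jacobian --- is only sketched: the monomials $m_\sigma$ are not identified, the multivariable master series is not constructed, and the analogue of Lemma~\ref{lem: evaluation of master series} and of the $\rescaledpsi$-cancellation of Section~\ref{section: relating c and psi} are asserted to be ``qualitatively similar'' exercises without evidence; the paper's own proof of the single case occupies two sections and two appendices of delicate generating-function identities, so this extrapolation is far from automatic. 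Second, even if that program were completed, step (iii) reduces the conjecture to the completeness of the Faber--Zagier (or Pixton) relations, which the paper explicitly states is an open question; a conditional reduction of one open conjecture to another is not a proof of the statement.

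To your credit, you diagnose both obstructions accurately, and your reading of ``comes from a relation on $\J_g$'' as membership in the $\DD$-closure of the geometric relations is consistent with how the paper uses Yin's framework. But the correct conclusion to draw is that the statement remains open: the paper's contribution is precisely the proof-of-concept instance of Theorem~\ref{thm:main}, together with the computational evidence for $g\le 19$ recorded in Remark~\ref{jacobian_curve}, and nothing in your proposal (or in the paper) upgrades that to the general claim.
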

Yin further conjectured that the $\fsl_2$ action is the only source of all tautological relations.
For more details and precise statements we refer to \cite[Conjecture 3.19]{Y1}.
In a similar way one can use relations on the universal Jacobian and produce relations on all powers of the universal curve.
Conjecturally these relations are enough.
In \cite{tavakol_invariants} it is proved that all tautological relations on $\CC_2^n$ can be obtained from the universal Jacobian.
From the results in \cite{PTY} the same is true for the moduli spaces $\CC_g^n$ for $g=3,4$ and $n \in \N$ as well as $\CC_5^n$ when $n \leq 7$. 
By the results in \cite{P3, tavakol_conjectural} this procedure leads to a conjectural description of the space of relations on the moduli space $\CM_{g,n}^{rt}$ of stable $n$-pointed curves of genus $g$ with rational tails.
The analogous version of Faber--Zagier relations on the moduli space $\M_{g,n}$ are introduced by Pixton \cite{Pi}.
These relations are conjectured to be all tautological relations.
There is a natural way to restrict these relations on the space $\CM_{g,n}^{rt}$ and push them forward to $\CC_g^n$ via the morphism $\CM_{g,n}^{rt}\to \CC_g^n$ which contracts all rational components. 
It is a reasonable hope that the method studied in this article can be used to prove more Faber--Pixton--Zagier relations on $\CC_g^n$ from the universal Jacobian. 
This would give a partial positive answer to \cite[Conjecture 3.24]{tavakol_conjectural}.

\appendix

\section{Trivalent graphs and cores}
\label{app:trees}
In this appendix, we describe a structure theorem for trivalent graphs (i.e., all vertices are of valence $1$ or $3$---but the graph may have leaves) which realize such graphs as being built by inserting a collection of trees into a closed trivalent core graph.

\subsection{Conventions on graphs}
We consider a graph $\Gamma$ as being a tuple $(V,H,\tau,\iota)$ where $V$ is the set of \emph{vertices}, $H$ is the set of \emph{half-edges}, $\tau:H\to V$ is the target map, and $\iota:H\to H$ is an involution. 
An \emph{edge} is a non-fixed orbit of $\iota$, and the set of edges is denoted $E$ or $E(\Gamma)$.
A \emph{leaf} is a fixed orbit of $\iota$.
A graph is \emph{leaf-free} if it has no leaves.
If $h$ is a half-edge then $\tau(h)$ is the vertex of $h$.
If $v$ is a vertex then $\tau^{-1}(v)$ is the set of half-edges of $v$. 
The \emph{valence} of $v$ is the cardinality $|\tau^{-1}(v)|$; in particular \emph{trivalent} means valence $3$.
The geometric realization $|\Gamma|$ of a graph $\Gamma$ is the topological space
\[
(([0,1]\times H) \sqcup V )/ \left\{(0,h)\sim \tau(h), (t,h)\sim (1-t,\iota(h))\right\}.
\]
A \emph{connected component} of a graph $\Gamma$ is a subgraph whose geometric realization is a connected component of $|\Gamma|$.
A graph is a \emph{tree} if its geometric realization is simply connected and a \emph{forest} if its connected components are trees.
The \emph{Euler characteristic} of $\Gamma$ is $\chi(\Gamma)=|V|-|E|$.
\begin{lem}
\label{lem: euler trivalent}
The Euler characteristic of a trivalent graph is $\frac{|L|-|V|}{2}$ where $L$ is the set of leaves.
\end{lem}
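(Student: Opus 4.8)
The plan is to prove this by a single double count of the set $H$ of half-edges of a trivalent graph $\Gamma=(V,H,\tau,\iota)$. Since $\chi(\Gamma)=|V|-|E|$, it suffices to pin down $|E|$ in terms of $|V|$ and $|L|$, and the half-edges give two independent ways of accessing the relevant cardinalities. First I would count $H$ by vertices: because $\Gamma$ is trivalent, every vertex has exactly three incident half-edges, so $|H|=\sum_{v\in V}|\tau^{-1}(v)|=3|V|$. (I read ``trivalent'' in the strict sense $|\tau^{-1}(v)|=3$ for all $v\in V$; the ``valence $1$'' points mentioned in the opening paragraph of this appendix are the free ends of leaves appearing in the geometric realization, not vertices of $\Gamma$.)

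Next I would count $H$ using the involution $\iota$: its orbits partition $H$, and by definition the free (two-element) orbits are exactly the edges while the fixed (one-element) orbits are exactly the leaves, so $|H|=2|E|+|L|$. Equating the two counts yields $3|V|=2|E|+|L|$, hence $|E|=\tfrac12(3|V|-|L|)$, and therefore
\[
\chi(\Gamma)=|V|-|E|=|V|-\tfrac12\bigl(3|V|-|L|\bigr)=\tfrac12\bigl(|L|-|V|\bigr),
\]
which is the claim. No connectivity hypothesis enters, since $|H|$, $|E|$, $|L|$, and $|V|$ are all counted globally; if one preferred, the statement could instead be checked on a single connected component and then extended by additivity of $\chi$, $|V|$, and $|L|$ over $\pi_0(\Gamma)$.

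I do not expect any genuine obstacle here; the lemma is elementary. The only point requiring care is the bookkeeping convention baked into the definitions of edge and leaf: a leaf is a half-edge and so contributes to the valence of its vertex, but it is \emph{not} an edge and so does not contribute to $|E|$. This asymmetry between the two counts of $|H|$ is precisely what produces the factor $\tfrac12$ and the sign pattern $|L|-|V|$, so the write-up should make that convention explicit before equating $3|V|$ with $2|E|+|L|$.
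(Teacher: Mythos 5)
Your proof is correct and is essentially the paper's own argument: both count the half-edge set $H$ once by vertices (giving $3|V|$) and once by the orbits of $\iota$ (giving $2|E|+|L|$), then solve for $|E|$ and substitute into $\chi=|V|-|E|$. Your added remarks on the leaf/edge bookkeeping convention and on the strict reading of ``trivalent'' match the convention the paper's computation actually uses.
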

\begin{proof}
We know $3|V|=|H|$ and $|H|=2|E|+|L|$ so $|E|=\frac{3|V|-|L|}{2}$.
\end{proof}

An \emph{ordered graph} is a graph equipped with an order on its vertices and an order on the half-edges of each vertex. 
\begin{rem}
\label{remark: ordered graphs have no automorphisms}
Graphs, and even ordered graphs, can have automorphisms, but a leaf-free ordered graph has only the identity automorphism.
\end{rem}

\subsection{The core of a trivalent graph}
\begin{dfn}
Let $\Gamma$ be a trivalent graph. 
A half-edge $h$ of $\Gamma$ is \emph{superfluous} if either 
\begin{enumerate} 
\item the half-edge $h$ is a leaf or
\item the half-edge $h$ ``points toward a tree'' in the sense that there is a simply connected component of $\Gamma\setminus\{h,\iota(h)\}$ which does not contain $\tau(h)$ but is in the same connected component as $\tau(h)$ in the larger ambient graph $\Gamma$.
\end{enumerate}
A vertex is superfluous if it has a superfluous half-edge and is otherwise \emph{core}.
\end{dfn}
\begin{dfn}
Let $\Gamma$ be a (possibly ordered) trivalent graph. The \emph{core} of $\Gamma$ is the graph $\Gamma'=(V',H',\tau',\iota')$ (ordered if $\Gamma$ is) whose
\begin{enumerate}
\item vertices are the core vertices of $\Gamma$, 
\item half-edges are the half-edges of the core vertices, 
\item whose target map $\tau'$ is induced,
\item whose half-edge bijection $\iota'$ is induced by the bijection $\iota$ of $\Gamma$ in the sense explained below, and
\item whose orders on vertices and half-edges are induced by those of $\Gamma$.
\end{enumerate}
For $h$ a half-edge of a core vertex $v_0$, construct a sequence 
\[(h=h_0,h_1,\ldots, h_n=\iota'(h))\] 
of half-edges recursively as follows.
Suppose $(h_0,\ldots, h_{2k})$ is constructed.
Let $h_{2k+1}=\iota(h_{2k})$. 

If $\tau(h_{2k+1})$ is a core vertex, then $n=2k+1$ and we are done.

Otherwise, $\tau(h_{2k+1})$ is superfluous and thus has at least one superfluous half-edge. 
The half-edge $h_{2k+1}$ cannot be superfluous because then $v_0$ would be superfluous (it would imply $v_0$ is a vertex of a subtree in $\Gamma\setminus \{h_{2k},h_{2k+1}\}$, which in turn would imply that both half-edges of $v_0$ other than $h_{0}$ were superfluous).
On the other hand, if both the remaining half-edges of $\tau(h_{2k+1})$ were superfluous, then $h_{2k}$ would be superfluous as well.
Then since $\tau(h_{2k+1})$ is superfluous, $h_{2k+1}$ is not superfluous, and at least one of the other two half-edges of $\tau(h_{2k+1})$ is not superflous, we conclude that $\tau(h_{2k+1})$ has precisely one superfluous half-edge.
Let $h_{2k+2}$ be the (necessarily unique) non-superfluous half-edge of $v$ different from $h_{2k+1}$. 
\end{dfn}
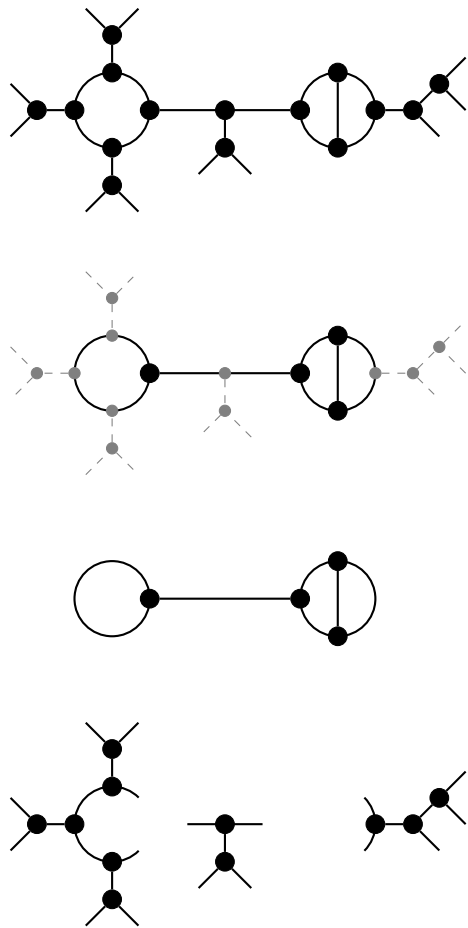
\begin{figure}\label{figure: core}
\begin{tikzpicture}
\node[circle,fill=black,inner sep=2.5pt,draw] (a) at (-1,0) {};
\node[circle,fill=black,inner sep=2.5pt,draw] (b) at (1,0) {};
\node[circle,fill=black,inner sep=2.5pt,draw] (c) at (2,0) {};
\node[circle,fill=black,inner sep=2.5pt,draw] (d) at (1.5,-.5) {};
\node[circle,fill=black,inner sep=2.5pt,draw] (e) at (1.5,.5) {};
\node[circle,fill=black,inner sep=2.5pt,draw] (g) at (-1.5,-.5) {};
\node[circle,fill=black,inner sep=2.5pt,draw] (h) at (-1.5,.5) {};
\node[circle,fill=black,inner sep=2.5pt,draw] (i) at (-2,0) {};
\node[circle,fill=black,inner sep=2.5pt,draw] (j) at (-1.5,-1) {};
\node[circle,fill=black,inner sep=2.5pt,draw] (k) at (-1.5,1) {};
\node[circle,fill=black,inner sep=2.5pt,draw] (l) at (-2.5,0) {};
\node[circle,fill=black,inner sep=2.5pt,draw] (m) at (2.5,0) {};
\node[circle,fill=black,inner sep=2.5pt,draw] (n) at (0,-.5) {};
\node[circle,fill=black,inner sep=2.5pt,draw] (o) at (0,0) {};
\node[circle,fill=black,inner sep=2.5pt,draw] (p) at (2.85,.35) {};

\draw[thick] (1.5,0) circle (.5cm);
\draw[thick] (-1.5,0) circle (.5cm);
\draw[thick] (a) edge (b);
\draw[thick] (o) edge (n);
\draw[thick] (i) edge (l);
\draw[thick] (j) edge (g);
\draw[thick] (k) edge (h);
\draw[thick] (m) edge (p);
\draw[thick] (c) edge (m);
\draw[thick] (e) edge (d);
\draw[thick] (m)--(2.85,-.35);
\draw[thick] (3.2,0)--(p)--(3.2,.7);
\draw[thick] (-2.85,.35)--(l)--(-2.85,-.35);
\draw[thick] (.35,-.85)--(n)--(-.35,-.85);
\draw[thick] (-1.85,-1.35)--(j)--(-1.15,-1.35);
\draw[thick] (-1.85,1.35)--(k)--(-1.15,1.35);
\begin{scope}[yshift = -3.5cm]
\node[circle,fill=black,inner sep=2.5pt,draw] (a) at (1,0) {};
\node[circle,fill=black,inner sep=2.5pt,draw] (b) at (-1,0) {};
\draw[thick] (1.5,0) circle (.5cm);
\draw[thick] (-1.5,0) circle (.5cm);
\draw[thick] (a) edge (b);
\node[circle,fill=gray,gray,inner sep=1.5pt,draw] (c) at (2,0) {};
\node[circle,fill=black,inner sep=2.5pt,draw] (d) at (1.5,-.5) {};
\node[circle,fill=black,inner sep=2.5pt,draw] (e) at (1.5,.5) {};
\node[circle,fill=gray,gray,inner sep=1.5pt,draw] (g) at (-1.5,-.5) {};
\node[circle,fill=gray,gray,inner sep=1.5pt,draw] (h) at (-1.5,.5) {};
\node[circle,fill=gray,gray,inner sep=1.5pt,draw] (i) at (-2,0) {};
\node[circle,fill=gray,gray,inner sep=1.5pt,draw] (j) at (-1.5,-1) {};
\node[circle,fill=gray,gray,inner sep=1.5pt,draw] (k) at (-1.5,1) {};
\node[circle,fill=gray,gray,inner sep=1.5pt,draw] (l) at (-2.5,0) {};
\node[circle,fill=gray,gray,inner sep=1.5pt,draw] (m) at (2.5,0) {};
\node[circle,fill=gray,gray,inner sep=1.5pt,draw] (n) at (0,-.5) {};
\node[circle,fill=gray,gray,inner sep=1.5pt,draw] (o) at (0,0) {};
\node[circle,fill=gray,gray,inner sep=1.5pt,draw] (p) at (2.85,.35) {};
\draw[gray,dashed] (o) edge (n);
\draw[gray,dashed] (i) edge (l);
\draw[gray,dashed] (j) edge (g);
\draw[gray,dashed] (k) edge (h);
\draw[gray,dashed] (m) edge (p);
\draw[gray,dashed] (c) edge (m);
\draw[thick] (e) edge (d);
\draw[gray,dashed] (m)--(2.85,-.35);
\draw[gray,dashed] (3.2,0)--(p)--(3.2,.7);
\draw[gray,dashed] (-2.85,.35)--(l)--(-2.85,-.35);
\draw[gray,dashed] (.35,-.85)--(n)--(-.35,-.85);
\draw[gray,dashed] (-1.85,-1.35)--(j)--(-1.15,-1.35);
\draw[gray,dashed] (-1.85,1.35)--(k)--(-1.15,1.35);
\end{scope}
\begin{scope}[yshift = -6.5cm]
\node[circle,fill=black,inner sep=2.5pt,draw] (a) at (1,0) {};
\node[circle,fill=black,inner sep=2.5pt,draw] (b) at (-1,0) {};
\draw[thick] (1.5,0) circle (.5cm);
\draw[thick] (-1.5,0) circle (.5cm);
\draw[thick] (a) edge (b);
\node[circle,fill=black,inner sep=2.5pt,draw] (d) at (1.5,-.5) {};
\node[circle,fill=black,inner sep=2.5pt,draw] (e) at (1.5,.5) {};
\draw[thick] (e) edge (d);
\end{scope}

\begin{scope}[yshift = -9.5cm]
\draw[thick] (-2,0) arc (180:45:.5);
\draw[thick] (-2,0) arc (180:315:.5);
\draw[thick] (2,0) arc (0:45:.5);
\draw[thick] (2,0) arc (0:-45:.5);
\draw[thick] (-.5,0)--(.5,0);

\node[circle,fill=black,inner sep=2.5pt,draw] (c) at (2,0) {};
\node[circle,fill=black,inner sep=2.5pt,draw] (g) at (-1.5,-.5) {};
\node[circle,fill=black,inner sep=2.5pt,draw] (h) at (-1.5,.5) {};
\node[circle,fill=black,inner sep=2.5pt,draw] (i) at (-2,0) {};
\node[circle,fill=black,inner sep=2.5pt,draw] (j) at (-1.5,-1) {};
\node[circle,fill=black,inner sep=2.5pt,draw] (k) at (-1.5,1) {};
\node[circle,fill=black,inner sep=2.5pt,draw] (l) at (-2.5,0) {};
\node[circle,fill=black,inner sep=2.5pt,draw] (m) at (2.5,0) {};
\node[circle,fill=black,inner sep=2.5pt,draw] (n) at (0,-.5) {};
\node[circle,fill=black,inner sep=2.5pt,draw] (o) at (0,0) {};
\node[circle,fill=black,inner sep=2.5pt,draw] (p) at (2.85,.35) {};
\draw[thick] (o) edge (n);
\draw[thick] (i) edge (l);
\draw[thick] (j) edge (g);
\draw[thick] (k) edge (h);
\draw[thick] (m) edge (p);
\draw[thick] (c) edge (m);
\draw[thick] (m)--(2.85,-.35);
\draw[thick] (3.2,0)--(p)--(3.2,.7);
\draw[thick] (-2.85,.35)--(l)--(-2.85,-.35);
\draw[thick] (.35,-.85)--(n)--(-.35,-.85);
\draw[thick] (-1.85,-1.35)--(j)--(-1.15,-1.35);
\draw[thick] (-1.85,1.35)--(k)--(-1.15,1.35);
\end{scope}
\end{tikzpicture}
\caption{A graph $\Gamma$, the same graph $\Gamma$ with superfluous vertices and half-edges in thinner gray, the core of $\Gamma$, and the insertion forest of $\Gamma$.}
\end{figure}

\begin{lem}
The core is a well-defined leaf-free trivalent graph.
\end{lem}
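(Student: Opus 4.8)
The plan is to check, in order, that the tuple $\Gamma'=(V',H',\tau',\iota')$ produced in the definition of the core is a well-formed graph, and then that it is leaf-free; trivalence will need no argument. Concretely I would verify: (1) for every half-edge $h$ of a core vertex the recursively constructed sequence $(h_0,h_1,\dots)$ terminates, so that $\iota'(h)$ is actually defined; (2) the recursion halts only at a core vertex, so that $\iota'(h)\in H'$, and the resulting map $\iota'$ is an involution on $H'$; and (3) $\iota'$ has no fixed point, i.e.\ $\Gamma'$ is leaf-free. Trivalence is then immediate: a core vertex is a vertex of $\Gamma$ retained together with \emph{all} of its half-edges, and $\Gamma$ is trivalent, so each vertex of $\Gamma'$ keeps its valence $1$ or $3$; the orders on $V'$ and $H'$ are simply restrictions of the orders on $\Gamma$.

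The substance of the proof is step (1). I would show that the sequence of vertices $v_0=\tau(h_0),\ \tau(h_1)=\tau(h_2),\ \tau(h_3)=\tau(h_4),\dots$ traversed by the recursion has no repetitions before the recursion stops. Since $\Gamma$ has finitely many vertices and the recursion can only fail to halt by producing a further vertex, this forces termination, and by the stopping rule it terminates at a core vertex, giving $\iota'(h)=h_n\in H'$. To prove non-repetition, recall that the argument given inside the definition of the core shows that each interior superfluous vertex $w=\tau(h_{2j-1})=\tau(h_{2j})$ of the path has exactly one superfluous half-edge, so that its two non-superfluous half-edges are precisely the "incoming" half-edge $h_{2j-1}$ and the "continuation" half-edge $h_{2j}$. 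If the recursion returned to such a $w$ at a later time, it would enter $w$ along some half-edge of $w$; since incoming half-edges are non-superfluous, that half-edge is $h_{2j-1}$ or $h_{2j}$, and in either case one deduces an equality of two earlier "leaving" half-edges and hence of their vertices, shifting the pair of repeated indices inward by two. Iterating, one bottoms out either at an equality $w_0=w_m$ with $m\ge1$ (impossible, since $w_0$ is core while $w_m$ is superfluous), or at a repeated-index gap of $1$ or $2$, where a direct check shows that $w$ would then possess all three of its half-edges non-superfluous --- again impossible.

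For (2) and (3) I would use that the recursion is deterministic and \emph{reversible}: if $(h_0,\dots,h_n)$ is the sequence produced from $h_0$, then the sequence produced from $h_n$ is exactly $(h_n,h_{n-1},\dots,h_0)$. This is a one-line induction from the fact that at each interior superfluous vertex the two non-superfluous half-edges form the consecutive pair $\{h_{2j-1},h_{2j}\}$, so that "the non-superfluous half-edge different from the one just entered" hands back the predecessor. Reversibility gives $\iota'(\iota'(h))=h$ at once, so $\iota'$ is an involution. For leaf-freeness, suppose $\iota'(h)=h$, i.e.\ $h_n=h_0$; then the sequence from $h_n=h_0$ equals both the original $(h_0,\dots,h_n)$ and, by reversibility, $(h_n,\dots,h_0)$, forcing $h_i=h_{n-i}$ for all $i$, and since $n$ is odd, taking $i=(n-1)/2$ gives $h_{(n-1)/2}=h_{(n+1)/2}$. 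But consecutive entries of the sequence are always distinct --- each pair $\{h_{2k},h_{2k+1}\}$ is a genuine edge of $\Gamma$, and every continuation half-edge is chosen different from its predecessor --- a contradiction.

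The main obstacle is step (1): turning the "no revisited vertex" assertion into a clean proof requires honest bookkeeping of which half-edge of each visited superfluous vertex is the superfluous one, and separately disposing of the short closed-walk configurations (gap $1$ and gap $2$). Once termination is established, the involution property and leaf-freeness follow formally from reversibility, and trivalence is free.
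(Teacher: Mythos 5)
Your proof follows essentially the same route as the paper's: termination via non-repetition of the recursion, the involution property via reversibility of the sequence $(h_0,\dots,h_n)\mapsto(h_n,\dots,h_0)$, and fixed-point-freeness via the reflection/midpoint argument (the midpoint coincidence $h_{(n-1)/2}=h_{(n+1)/2}$ being excluded because it would force a leaf or a repeated continuation half-edge). The only difference is one of detail: the paper simply asserts that the procedure cannot repeat a half-edge, while you supply the inward-propagation argument that justifies this, which is a correct (if slightly more laborious than necessary) elaboration of the same idea.
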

\begin{proof}
The procedure to generate $h_n$ cannot repeat a half-edge and so terminates.
If $(h_0,\ldots, h_n)$ is the involution sequence for $h_0$ then $(h_n,\ldots, h_0)$ is the sequence for $h_n$, so $\iota'$ is an involution.
Because of this reflection property, for $h_n$ to equal $h_0$ would either require some $h_k$ to be a leaf or $h_{2k+2}=h_{2k+1}$ for some index, both of which are false by construction.
Therefore $\iota'$ is fixed-point free.
\end{proof}
\begin{lem}
Let $\Gamma$ be a connected trivalent graph with Euler characteristic $\chi$. 
Then the core of $\Gamma$ is empty if $\chi\ge 0$ and has Euler characteristic $\chi$ otherwise.
A graph with $m$ leaves such that every connected component has non-positive Euler characteristic must have $m$ superfluous vertices.
\end{lem}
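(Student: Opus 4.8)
The plan is to deduce the statements about the core from the count of superfluous vertices in the last sentence, together with the preceding lemma. Since the core $\Gamma'$ is leaf-free and trivalent, Lemma~\ref{lem: euler trivalent} gives $\chi(\Gamma')=-\tfrac{1}{2}|V(\Gamma')|$. So once I know that a connected trivalent graph with $\chi(\Gamma)\le 0$ has exactly $|L|$ superfluous vertices, it has $|V|-|L|$ core vertices, and hence $\chi(\Gamma')=\tfrac{1}{2}(|L|-|V|)=\chi(\Gamma)$ by Lemma~\ref{lem: euler trivalent}; in particular $\chi=0$ forces $|V(\Gamma')|=0$, and for $\chi<0$ the core is nonempty with the asserted Euler characteristic. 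The only case of the first sentence left over is $\chi=1$, i.e.\ $\Gamma$ a tree, and there I argue directly that every vertex is superfluous: a vertex either carries a leaf, or all three incident half-edges are genuine edges, and removing any edge of a tree disconnects it into two trees, exhibiting a superfluous half-edge.

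The substance is therefore the final sentence. For a trivalent graph $\Gamma$ with every connected component of non-positive Euler characteristic I will characterize the superfluous vertices intrinsically. Each component $\Gamma_c$ has $b_1(\Gamma_c)=1-\chi(\Gamma_c)\ge 1$, so it has a nonempty $2$-core $\Gamma_c^{\min}$, obtained by repeatedly deleting leaves and valence-$1$ vertices with their edges; equivalently $\Gamma_c^{\min}$ is the union of all cycles of $\Gamma_c$ together with the bridges separating two cyclic parts. I claim a vertex $v$ is core precisely when $v\in\Gamma_c^{\min}$ and $v$ has valence $3$ there. For the forward direction: all three half-edges of such a $v$ are edges of the $2$-core, and removing an edge of the $2$-core either leaves $\Gamma_c$ connected (if the edge lies on a cycle) or splits off a part still containing a cycle (if it is a cyclic bridge), so no far component is a tree. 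For the converse: if $v\notin\Gamma_c^{\min}$ then at least two half-edges of $v$ point away from all cycles, and the corresponding far components are connected and acyclic, hence trees; while if $v\in\Gamma_c^{\min}$ with valence $\le 2$ there, then $v$ has a half-edge outside the $2$-core, which is either a leaf or the near half-edge of a bridge whose far side is acyclic and hence a tree. Either way $v$ is superfluous.

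With the characterization in hand the count is short. Inside each $\Gamma_c^{\min}$ there are no leaves and only valence-$2$ and valence-$3$ vertices; the handshake identity gives $\chi(\Gamma_c^{\min})=-\tfrac{1}{2}b_c$, where $b_c$ is the number of valence-$3$ vertices, and pruning preserves Euler characteristic, so $b_c=-2\chi(\Gamma_c)$. Summing over components, the number of core vertices of $\Gamma$ is $-2\chi(\Gamma)=|V|-|L|$ by Lemma~\ref{lem: euler trivalent}, and hence $\Gamma$ has exactly $|L|$ superfluous vertices, as needed.

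I expect the main obstacle to be the intrinsic characterization of the superfluous vertices, and in particular keeping the ``points toward a tree'' bookkeeping straight in the simultaneous presence of bridges and leaves. A convenient observation there is that deleting one edge creates at most one new leaf on each side, whereas a trivalent tree has $|V|+2\ge 3$ leaves by Lemma~\ref{lem: euler trivalent}, so any component of $\Gamma\setminus\{h,\iota(h)\}$ that happens to be a tree must absorb original leaves of $\Gamma$; this makes the valence analysis of the $2$-core (and, alternatively, a direct inductive count by pruning a leaf-bearing vertex) go through. Everything downstream of the characterization is routine Euler-characteristic bookkeeping.
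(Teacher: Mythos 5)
Your proof is correct, but it runs in the opposite logical direction from the paper's and leans on a different computation. The paper establishes the Euler-characteristic statement first and directly: it shows the core is nonempty when $\chi<0$ by exhibiting an embedded theta or handcuff, observes that the superfluous vertices span a forest, and checks that the core construction (delete a superfluous subtree, then identify the two dangling half-edges into one edge) changes $|V|-|E|$ by $+1-1=0$; the count of $m$ superfluous vertices is then implicit from Lemma~\ref{lem: euler trivalent} applied to the leaf-free core. You instead prove the count first, by characterizing the core vertices intrinsically as the valence-$3$ vertices of the $2$-core of each component and applying the handshake identity there, and only then read off the Euler characteristic of the core. Your route buys you nonemptiness of the core for free (from $|V'|=-2\chi>0$), so you never need the theta/handcuff argument, and your characterization of core vertices is sharper than the paper's two-type classification of superfluous vertices; the paper's route avoids introducing the $2$-core and stays entirely inside the operations the core construction actually performs. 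Both arguments rest on the same structural picture---superfluous vertices form trees and paths hanging off the branch locus---so the difference is one of organization and bookkeeping rather than of substance, and your version holds together (the one point worth spelling out fully is that when an edge of the $2$-core is a bridge of the $2$-core, each side retains minimum degree $2$ except at one vertex and hence still contains a cycle, which is exactly the degree count you allude to).
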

\begin{proof}
If $\Gamma$ is a tree then every vertex is trivially superfluous.
If $\Gamma$ has Euler characteristic $0$ then there is a single cycle in $|\Gamma|$ and so every vertex must have a half-edge pointing ``away'' from the cycle. So every vertex is superfluous.

The realization of any trivalent graph of negative Euler characteristic contains either an embedded ``theta'' or an embedded ``handcuff''. 
In either case the trivalent vertices of these subspaces must be core. 
Therefore a graph with negative Euler characteristic contains a core vertex.

Now divide the superfluous vertices into two types, according to whether the vertex has half-edges appearing in a sequence to generate the involution $\iota'$ or not. 
If $v$ is of the first type, then it has precisely two half-edges $h_1$ and $h_2$ in such a sequence. 
The third half-edge $h_3$ is necessarily superfluous, and as a result is attached to a tree (whose vertices are necessarily of the second type). 
Since there are core vertices, this must exhaust all superfluous vertices by connectedness.

Then in this case, the subgraph spanned by superfluous vertices is a forest subgraph of $\Gamma$, and the procedure to generate the core replaces each tree in the union with a ``half-edge gluing''.
Removing a subtree from a graph increases the Euler characteristic by one. 
Identifying two leaves into an edge decreases the Euler characteristic by one.
Therefore these two operations together preserve the Euler characteristic. 
\end{proof}
\begin{dfn}
Let $v$ be a superfluous vertex of a trivalent graph $\Gamma$ which is in a connected component with nonempty core.
The \emph{insertion forest} of $v$ is a graph built as follows.
The vertices are of the superfluous vertices of $\Gamma$. 
The half-edges are the half-edges of the superfluous vertices of $\Gamma$.
The target map is induced.
For the involution $\iota_{if}$, let $h$ be a half-edge of the insertion forest. 
If $\iota(h)$ is also in the insertion forest, then $\iota_{if}(h)=\iota(h)$.
Otherwise we declare that $\iota_{if}(h)=h$, i.e., that $h$ is a leaf of the insertion forest.
We call a connected component of the insertion forest an \emph{insertion tree}.
\end{dfn}
\begin{lem}
Let $T$ be an insertion tree of a trivalent graph $\Gamma$.
Then $T$ has precisely two leaves which are not leaves of $\Gamma$.
\end{lem}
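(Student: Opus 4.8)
The plan is to combine the explicit description of the leaves of the insertion forest with the structural analysis of superfluous vertices carried out in the proof of the core lemma above.

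First I would unwind the definitions. A half-edge $h$ of a superfluous vertex becomes a leaf of the insertion forest precisely when $\iota(h)=h$ (so $h$ is already a leaf of $\Gamma$) or $\iota(h)$ is a half-edge of a core vertex. Hence, among the leaves of an insertion tree $T$, those that are \emph{not} leaves of $\Gamma$ are exactly the half-edges of vertices of $T$ that are glued, in $\Gamma$, to half-edges of core vertices; the claim becomes that there are exactly two of these. I would also recall, from the proof of the core lemma, that the component $C$ of $\Gamma$ containing $T$ has nonempty core, that every superfluous vertex of $C$ is either of \emph{first type} — two of its half-edges lie in an involution sequence $(h_0,\dots,h_n)$ defining a core edge, and its third half-edge is superfluous and points to a tree of second-type vertices — or of \emph{second type}, lying in such a hanging tree, and that each hanging tree $\Pi$ attached at the superfluous half-edge $h$ of a first-type vertex is a whole connected component of $\Gamma\setminus\{h,\iota(h)\}$. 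In particular no half-edge of a vertex of $\Pi$, other than the root half-edge $\iota(h)$, is glued to a vertex outside $\Pi$; any such half-edge is internal to $\Pi$ or a leaf of $\Gamma$, and in particular none is glued to a core vertex.

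Next I would identify $T$ precisely. Any insertion tree contains a first-type vertex (a connected set of purely second-type vertices is a hanging tree, which is joined in the insertion forest to the first-type vertex it hangs off), and that vertex lies in a unique involution sequence $(h_0,\dots,h_n)$ — unique because, upon entering a first-type vertex, a sequence uses both of its non-superfluous half-edges and cannot repeat a half-edge. Then $T$ contains the entire \emph{spine} $\tau(h_1)=\tau(h_2),\ \tau(h_3)=\tau(h_4),\ \dots,\ \tau(h_{n-2})=\tau(h_{n-1})$ of the sequence — consecutive spine vertices being joined by the edges $\{h_{2k},h_{2k+1}\}$ — together with every hanging tree off the spine. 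This union $S$ has no edge of $\Gamma$ to a superfluous vertex outside it: the spine-internal half-edges join spine vertices, the unique superfluous half-edge at each spine vertex leads into its hanging tree, and a hanging tree only connects out through its root or through leaves of $\Gamma$. Hence $S$ is a full connected component of the insertion forest, so $S=T$. Finally I would enumerate the half-edges of $S$ glued to core vertices: at an interior spine vertex both non-superfluous half-edges are spine edges, so the only candidates lie at the two ends of the spine, where one checks that exactly one half-edge, namely $h_1=\iota(h_0)$ at one end and $h_{n-1}=\iota(h_n)$ at the other, is glued to a core half-edge (these are the two non-superfluous, non-spine half-edges of the single spine vertex when $n=3$). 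This gives exactly two new leaves.

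I expect the only genuine subtlety to be the claim that $S$ is exactly one connected component of the insertion forest — that $T$ neither merges spines of two different involution sequences nor gains extra connections to the core through a hanging tree — which rests on the facts that a first-type vertex lies in a unique involution sequence and that a hanging tree is a component of $\Gamma$ with a single edge deleted, both established in the core lemma. As a consistency check one can instead finish via Lemma~\ref{lem: euler trivalent}: $T$ is an abstract trivalent tree, hence has $|V_T|+2$ leaves in total, and the spine/hanging-tree bookkeeping shows that exactly $|V_T|$ of them are leaves of $\Gamma$, again leaving precisely two new leaves.
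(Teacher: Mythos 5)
Your proof is correct, but it takes a genuinely different route from the paper's. The paper argues by elimination: it identifies the ``new'' leaves of $T$ as the half-edges paired under $\iota$ with core half-edges (as you do), and then rules out the counts $0$, $1$, and $\ge 3$ directly from the definitions --- zero would make the component of $T$ a tree with empty core, one would make the adjacent core half-edge (and hence its vertex) superfluous, and three or more would force some vertex of $T$ to fail superfluity (a median vertex of three core-attachments has no half-edge pointing toward a tree). You instead give a constructive description: every insertion tree is the spine of a unique involution sequence $(h_0,\dots,h_n)$ together with its hanging trees, and the two new leaves are exactly $h_1=\iota(h_0)$ and $h_{n-1}=\iota(h_n)$. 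Your approach is longer and leans on the first-type/second-type dichotomy and the component structure of hanging trees established in the proof of the preceding lemma, but it buys a sharper statement --- an explicit bijection between insertion trees and core edges traversing at least one superfluous vertex, with the two roots identified concretely --- which the paper's argument does not provide; the paper's version is shorter and self-contained modulo the definition of superfluous. Your Euler-characteristic consistency check at the end ($|L_T|=|V_T|+2$ with exactly $|V_T|$ leaves of $\Gamma$, counted via the hanging trees) is also a valid alternative way to close the argument once the structure of $T$ is in hand.
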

\begin{proof}
These are the leaves that are paired under $\iota$ with half-edges in the core. 
If there were none this would violate connectedness.
If there were only one, $h$, then $\iota(h)$ would be superfluous so $\tau(\iota(h))$ would be superfluous, a contradiction.
If there were more than two, then superfluity would fail for some vertex of $T$.
\end{proof}

\section{Generating functions}
\label{app:series} 
This appendix is concerned with calculating generating functions for classes of graphs involved in the master series $\masterseries$ and the key lemma~\ref{lem: evaluation of master series}. 

We use $\mathfrak{P}(n)$ to denote the set of unordered partitions of the finite set $\{1,\ldots, n\}$. That is, an element of $\mathfrak{P}(n)$ is a set $\mathfrak{p}$ of pairwise disjoint nonempty subsets (called \emph{blocks}) $\{\mathfrak{b}_i\}$ of $\{1,\ldots,n\}$ whose union is $\{1,\ldots,n\}$.

We start with a counting lemma and a standard combinatorial technique that we will use repeatedly.
\begin{lem}
\label{lem: count of rooted ordered trivalent trees}
The number of isomorphism classes of rooted ordered trivalent trees with $n$ vertices (for $n\ge 1$) is 
\[
\frac{(2n)!}{(n+1)!}3^n.
\]
\end{lem}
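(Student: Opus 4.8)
The plan is to count rooted ordered trivalent trees by setting up a functional equation for their exponential generating function and then extracting coefficients via Lagrange inversion. First I would pin down conventions: a rooted ordered trivalent tree has a distinguished leaf (the root) and every non-root vertex has valence $3$, while the root's half-edge is attached to one of the trivalent vertices; the ``order'' data is an order on the vertices together with an order on the half-edges at each vertex (cf.\ the appendix conventions), so that by Remark~\ref{remark: ordered graphs have no automorphisms} these objects have no nontrivial automorphisms and it is legitimate to count them by their exponential generating function in the vertex-counting variable.

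Next I would derive the recursive structure. Let $R(x) = \sum_{n\ge 1} r_n \frac{x^n}{n!}$ be the exponential generating function where $r_n$ is the number of isomorphism classes of rooted ordered trivalent trees with $n$ vertices. Removing the root and looking at the unique trivalent vertex it was attached to, that vertex has two further half-edges, each of which either is a leaf or points into a smaller rooted ordered trivalent subtree (rerooted at that half-edge). Because of the ordering of the two remaining half-edges, an ordered pair of ``subtree or leaf'' choices is recorded, and distributing the vertex labels among the two subtrees contributes the usual binomial bookkeeping that turns products into products of exponential generating functions. This yields the functional equation
\[
R(x) = x\,(1 + R(x))^2,
\]
the extra factor $x$ accounting for the trivalent vertex adjacent to the root.

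Then I would solve this by Lagrange inversion. Writing $S(x) = 1 + R(x)$, the equation becomes $S - 1 = x S^2$, i.e.\ $x = \phi(S)$ with an explicit $\phi$, to which Lagrange inversion applies: the coefficient of $x^n$ in $R(x) = S(x) - 1$ is
\[
[x^n]R(x) = \frac{1}{n}[t^{n-1}]\,(1+t)^{2n} = \frac{1}{n}\binom{2n}{n-1},
\]
and a short manipulation of binomial coefficients rewrites $\frac{1}{n}\binom{2n}{n-1} = \frac{1}{n+1}\binom{2n}{n} \cdot \frac{??}{}$; carrying this through and multiplying by $n!$ to pass from the exponential generating function coefficient to the actual count gives $r_n = n!\cdot\frac{1}{n}\binom{2n}{n-1}$. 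The last step is to reconcile this with the claimed closed form $\frac{(2n)!}{(n+1)!}3^n$: here I expect the discrepancy to be exactly the half-edge ordering. Each of the $n$ trivalent vertices carries $3!=6$ orderings of its half-edges, but the tree structure has already fixed ``which half-edge points toward the root,'' so only the remaining $3$ orderings at each vertex are free beyond what the unordered-with-planar-structure count records; tracking this factor of $3^n$ against the Catalan-type count $\frac{n!}{n}\binom{2n}{n-1}$, together with the identity $\frac{n!}{n}\binom{2n}{n-1}\cdot(\text{appropriate power of }3) = \frac{(2n)!}{(n+1)!}3^n$, produces the stated formula. The main obstacle I anticipate is bookkeeping the combinatorial weights correctly—specifically, being careful that the ``ordered'' structure (order on vertices versus order on half-edges at each vertex) is counted once and only once in setting up the functional equation, so that the factor $3^n$ emerges cleanly rather than some nearby but wrong power; once the functional equation is correctly calibrated, the Lagrange inversion and the final binomial identity are routine.
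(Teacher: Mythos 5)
Your approach is correct in substance but takes a genuinely different route from the paper. The paper simply quotes the Catalan count $\frac{1}{n+1}\binom{2n}{n}$ of planar binary trees and then multiplies by $3^n$ (the choice, at each trivalent vertex, of which of the three ordered half-edges points toward the root, the planar structure accounting for the order of the remaining two) and by $n!$ (the order on the vertices); you instead derive the Catalan count from a root decomposition, a functional equation, and Lagrange inversion. Your route is more self-contained but needs one calibration fix, which you yourself flag: the decomposition at the root-adjacent vertex gives $R(x)=3x\,(1+R(x))^2$, not $R(x)=x(1+R(x))^2$, since the position of the root-pointing half-edge among the three ordered half-edges is part of the data (and the induced order on the other two half-edges is exactly what makes $(1+R)^2$ an \emph{ordered} pair). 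With the $3$ in place, setting $S=1+R$ gives $S=1+3xS^2$, Lagrange inversion yields $[x^n]R=3^n\cdot\frac{1}{n+1}\binom{2n}{n}$, and since $R$ is an exponential generating function the vertex-order factor $n!$ is automatic, giving $r_n=n!\,3^n\frac{(2n)!}{n!(n+1)!}=\frac{(2n)!}{(n+1)!}3^n$ with no further reconciliation step. Also note that your intermediate identity is simply $\frac{1}{n}\binom{2n}{n-1}=\frac{1}{n+1}\binom{2n}{n}$ exactly (both equal the $n$th Catalan number), so the unresolved extra factor you left as a placeholder is $1$.
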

\begin{proof}
It is a standard combinatorial identity that the number of planar binary trees with $n$ vertices is $\frac{1}{n+1}\binom{2n}{n}$ for $n\ge 1$.
Since half-edges at each vertex are ordered, we have a choice of which half-edge points toward the root in each of the $n$ vertices, which yields a factor of $3^n$.
The planar structure of the tree induces an order on the other two half-edges of each vertex, and so specifying that this order and the given order must be compatible with respect to some convention uses the planarity to keep track of the distinction between the other two half-edges.
The vertices must be ordered which gives another factor of $n!$ and multiplying all of these together yields the result.
\end{proof}
Sometimes it is useful to include a ``trivial tree'' for the case $n=0$ and sometimes it is not. 
Our default will follow our conventions and we will specify explicitly if we want to include a trivial tree in this count. 

The following ``compositional formula'' will be useful several times.
\begin{thm}[{Compositional formula~\cite[5.1.4]{EN}}]
\label{thm: compositional formula}
Given two exponential generating functions
\begin{align*}
F(z)&= \sum_{n=1}^\infty \frac{f_n}{n!}z^n\\
G(z)&= 1+\sum_{n=1}^\infty \frac{g_n}{n!}z^n,
\end{align*}
then the exponential generating function for the sequence
\begin{align*}
h_0&=1
\\
h_n&=\sum_{\mathfrak{p}\in \mathfrak{P}(n)}g_{|\mathfrak{p}|}\prod_{\mathfrak{b}_i\in \mathfrak{p}}f_{|\mathfrak{b}_i|}
\end{align*}
is $(G\circ F)(z)$.
\end{thm}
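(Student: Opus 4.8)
The plan is to prove the identity by a direct computation with the coefficients of the composite series $G\circ F$: I would read off the exponential coefficient of $z^n$ in $(G\circ F)(z)$ and match it term by term with $h_n$. First I would write $(G\circ F)(z)=1+\sum_{k\ge 1}\frac{g_k}{k!}F(z)^k$ by substituting $z\mapsto F(z)$ into $G$; this is legitimate as a formal power series identity because $F$ has zero constant term, so only finitely many values of $k$ contribute to each coefficient of $z^n$.

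Next I would expand the $k$-th power. Since $F(z)=\sum_{n\ge 1}\frac{f_n}{n!}z^n$, collecting terms by total degree gives
\[
F(z)^k=\sum_{n\ge k} z^n \sum_{\substack{n_1+\cdots+n_k=n\\ n_i\ge 1}}\ \prod_{i=1}^k\frac{f_{n_i}}{n_i!},
\]
so that the exponential coefficient is
\[
n!\,[z^n]F(z)^k=\sum_{\substack{n_1+\cdots+n_k=n\\ n_i\ge 1}}\binom{n}{n_1,\ldots,n_k}\prod_{i=1}^k f_{n_i}.
\]
The key combinatorial step is to interpret the right-hand side as a weighted count of \emph{ordered} set partitions of $\{1,\ldots,n\}$ into $k$ nonempty blocks: a choice of sizes $(n_1,\ldots,n_k)$ together with the multinomial coefficient records the number of ways to distribute $\{1,\ldots,n\}$ into an ordered sequence of blocks of those sizes, and each block $\mathfrak{b}$ contributes a factor $f_{|\mathfrak{b}|}$. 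Because the blocks of a set partition are pairwise distinct nonempty subsets, every unordered partition $\mathfrak{p}\in\mathfrak{P}(n)$ with $|\mathfrak{p}|=k$ is obtained from exactly $k!$ ordered ones, all carrying the same product weight $\prod_{\mathfrak{b}_i\in\mathfrak{p}}f_{|\mathfrak{b}_i|}$. Hence
\[
n!\,[z^n]\Bigl(\tfrac{g_k}{k!}F(z)^k\Bigr)=g_k\sum_{\substack{\mathfrak{p}\in\mathfrak{P}(n)\\ |\mathfrak{p}|=k}}\ \prod_{\mathfrak{b}_i\in\mathfrak{p}}f_{|\mathfrak{b}_i|}.
\]
Summing over $k\ge 1$ and recording that the $k=0$ term contributes $1$ only to $z^0$, I would conclude that $n!\,[z^n](G\circ F)(z)=\sum_{\mathfrak{p}\in\mathfrak{P}(n)}g_{|\mathfrak{p}|}\prod_{\mathfrak{b}_i\in\mathfrak{p}}f_{|\mathfrak{b}_i|}=h_n$ for $n\ge 1$, while for $n=0$ the constant term of $G\circ F$ is $1=h_0$.

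I expect the only genuine subtlety to be the bookkeeping that converts ordered to unordered set partitions---specifically, checking that the $k!$ in the denominator of $\frac{g_k}{k!}$ exactly cancels the overcounting with no residual stabilizer, which is precisely where one uses that the blocks are distinct as sets. An alternative, entirely structural proof runs through the composition operation on exponential species, where the statement is essentially tautological; but the elementary coefficient extraction above is self-contained and is all that is needed here.
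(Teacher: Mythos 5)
The paper does not prove this statement at all---it is quoted as Theorem 5.1.4 of the cited reference [EN]---so there is no in-paper argument to compare against. Your coefficient-extraction proof is correct and is the standard one: the expansion of $F(z)^k$ via multinomial coefficients, the identification of $\binom{n}{n_1,\ldots,n_k}$ with ordered set partitions into blocks of prescribed sizes, and the exact cancellation of the $k!$ (valid because the blocks of a set partition are pairwise distinct nonempty sets, so the symmetric group acts freely on orderings) are all handled properly, as is the formal well-definedness of the composition from $F$ having zero constant term.
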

One well-known special case of this theorem occurs when $g_n=1$ for all $n$. 
\begin{cor}[Exponential compositional formula]
\label{cor: exponential compositional formula}
Let $F(z)$ be the exponential generating function for $f_n$ for $n\ge 1$ as above.
Then $e^{F(z)}$ is the exponential generating function for
\begin{align*}
h_0&=1
\\
h_n&=\sum_{\mathfrak{p}\in \mathfrak{P}(n)}\prod_{\mathfrak{b}_i\in \mathfrak{p}}f_{|\mathfrak{b}_i|}.
\end{align*}
\end{cor}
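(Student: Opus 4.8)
The plan is simply to specialize the Compositional formula (Theorem~\ref{thm: compositional formula}) to the case where the outer sequence is constant, $g_n = 1$ for all $n\ge 1$. First I would record that with this choice the outer exponential generating function is
\[
G(z) = 1 + \sum_{n=1}^\infty \frac{1}{n!}z^n = e^z,
\]
so that the composite $(G\circ F)(z)$ appearing in the conclusion of Theorem~\ref{thm: compositional formula} is precisely $e^{F(z)}$.

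Next I would substitute $g_{|\mathfrak{p}|} = 1$ into the formula for $h_n$ provided by Theorem~\ref{thm: compositional formula}: the term indexed by a partition $\mathfrak{p}\in\mathfrak{P}(n)$ becomes $1\cdot\prod_{\mathfrak{b}_i\in\mathfrak{p}} f_{|\mathfrak{b}_i|}$, so the sum collapses to exactly
\[
h_n = \sum_{\mathfrak{p}\in\mathfrak{P}(n)}\prod_{\mathfrak{b}_i\in\mathfrak{p}} f_{|\mathfrak{b}_i|},
\]
with $h_0 = 1$ as in the general statement. Combining these two observations, the exponential generating function of the sequence $(h_n)$ is $(G\circ F)(z) = e^{F(z)}$, which is the claim.

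There is essentially no obstacle here: the entire content is contained in Theorem~\ref{thm: compositional formula}, and the only thing to check is the elementary identity $1+\sum_{n\ge1}z^n/n! = e^z$. If one preferred a self-contained argument avoiding the general compositional formula, one could instead expand $e^{F(z)} = \sum_{k\ge 0}\frac{1}{k!}F(z)^k$, multiply out $F(z)^k$ as a sum over ordered $k$-tuples of blocks, and observe that dividing by $k!$ converts ordered tuples of blocks into unordered set partitions with $k$ blocks; matching coefficients of $z^n/n!$ then reproduces the stated formula for $h_n$. Either route is routine, so I would present the one-line specialization as the proof.
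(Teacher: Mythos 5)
Your proposal is correct and matches the paper's treatment exactly: the paper introduces this corollary as the special case $g_n=1$ of Theorem~\ref{thm: compositional formula}, which is precisely your one-line specialization with $G(z)=e^z$.
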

We will use this formulation to count possibly disconnected graphs of some sort in terms of a count of connected graphs of the same sort.

Now we proceed to the various counting of the graphs involved. 

\subsection*{Leaf-free, negative Euler characteristic}
In this section, we prove the following count.
\begin{lem}
\label{lem: neg euler gen series leaf free}
The exponential generating function $G_-^{lf}(x)$ for isomorphism classes of ordered leaf-free trivalent graphs with $n$ vertices such that each connected component has negative Euler characteristic is
\begin{equation}
\label{eq: neg euler gen series leaf free}
G_-^{lf}(x)=\sum_{n=0}^\infty \frac{(6n-1)!!}{(2n)!}x^{2n}.
\end{equation}
\end{lem}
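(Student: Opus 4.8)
The plan is to reduce the statement to a count of fixed-point-free involutions on half-edges. First I would observe that for leaf-free trivalent graphs the Euler-characteristic restriction is automatic: by Lemma~\ref{lem: euler trivalent} a trivalent graph with no leaves and $v$ vertices has Euler characteristic $-v/2$, so every nonempty connected leaf-free trivalent graph has negative Euler characteristic. Hence $\mathcal{G}^{\lf}_-$ is simply the collection of isomorphism classes of ordered leaf-free trivalent graphs, and the sign condition may be dropped entirely. I would also record that a leaf-free trivalent graph with $v$ vertices has $3v$ half-edges, all paired into edges, so $v$ is even; this is why only even powers of $x$ occur in~\eqref{eq: neg euler gen series leaf free}.

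Next, for $v=2n$ with $n\ge 1$ I would count the isomorphism classes of ordered leaf-free trivalent graphs with $2n$ vertices directly. Since both the vertices and the half-edges at each vertex carry orders, any such graph is isomorphic, via its canonical order isomorphism, to exactly one graph with vertex set $\{1,\dots,2n\}$ (with the standard order) and half-edges $(i,1),(i,2),(i,3)$ at the $i$th vertex; and by Remark~\ref{remark: ordered graphs have no automorphisms}---equivalently, because a finite totally ordered set has no nontrivial order automorphism---two such standardized graphs are isomorphic as ordered graphs only if they coincide. Thus an isomorphism class of ordered leaf-free trivalent graphs with $2n$ vertices is precisely a fixed-point-free involution on the $6n$-element set of half-edges, and there are $(6n-1)!!$ of these.

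Finally I would assemble the generating function: the coefficient of $x^{2n}/(2n)!$ in $G^{\lf}_-(x)$ is $(6n-1)!!$ for $n\ge 1$, odd vertex counts contribute nothing, and $n=0$ gives the empty graph, contributing $1=(-1)!!$ under the usual convention; this is exactly~\eqref{eq: neg euler gen series leaf free}. I do not anticipate a genuine obstacle here: the only place that needs care is the identification of isomorphism classes of ordered graphs with fixed-point-free involutions on half-edges (the rigidity of ordered graphs) together with the degenerate bookkeeping for the empty graph and for odd $v$. One could instead count connected leaf-free trivalent graphs first and pass to the possibly disconnected count via the exponential compositional formula (Corollary~\ref{cor: exponential compositional formula}), but the direct involution count subsumes the disconnected case and is shorter.
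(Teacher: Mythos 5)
Your proof is correct and follows essentially the same route as the paper's: both observe via Lemma~\ref{lem: euler trivalent} that the negative Euler characteristic condition is vacuous for leaf-free trivalent graphs, and both reduce the count to fixed-point-free involutions (perfect pairings) on the $6n$ half-edges, giving $(6n-1)!!$. Your write-up just makes explicit the rigidity argument identifying isomorphism classes of ordered graphs with pairings, which the paper leaves implicit.
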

\begin{proof}
By Lemma~\ref{lem: euler trivalent}, asking for negative Euler characteristic is no condition on the graph, so this is counting perfect pairings of $3n$ half-edges. 
There are no perfect pairings if $n$ is odd and $(3n-1)!!$ if $n$ is even. 
The denominator comes from making this an exponential generating function.
\end{proof}
\subsection*{Two-rooted trees}
In this section, we prove the following count.
\begin{lem}
\label{lem: insertion trees gen series}
The two variable generating function $T_{rr}(x,y)$ in Definition \ref{trees} for isomorphism classes of ordered trivalent trees with $n$ vertices, two distinct distinguishable leaves called roots, and $m$ non-root leaves is
\begin{equation}
\label{eq: insertion trees gen series}
T_{rr}(x,y)=\frac{1}{\sqrt{1-12xy}}-1.
\end{equation}
\end{lem}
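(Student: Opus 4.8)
The plan is to reduce this count to the one already established for singly‑rooted trees in Lemma~\ref{lem: count of rooted ordered trivalent trees}, after first observing that the bivariate series is supported on a diagonal.

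First I would determine the possible shapes. A tree $T$ all of whose vertices are trivalent has $|E|=|V|-1=n-1$ edges, and since $3n=|H|=2|E|+|L|$ we get $|L|=n+2$; equivalently this is Lemma~\ref{lem: euler trivalent} applied to $\chi(T)=1$. Two of these leaves are the roots, so the number of non‑root leaves is forced to be $m=n$, hence $\mathcal{T}_{rr}(n,m)=\emptyset$ unless $m=n$. The case $n=0$ is vacuous (no vertices means no half‑edges, hence no leaves and no way to choose two roots), consistent with the right‑hand side having no constant term. Thus
\[
T_{rr}(x,y)=\sum_{n\ge 1}\#\mathcal{T}_{rr}(n,n)\,\frac{(xy)^n}{n!}.
\]

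Next I would exhibit a bijection between $\mathcal{T}_{rr}(n,n)$ and the set of pairs consisting of a rooted ordered trivalent tree with $n$ vertices together with a choice of one of its non‑root leaves: forgetting the second root of an element of $\mathcal{T}_{rr}(n,n)$ yields a rooted ordered trivalent tree with $n$ vertices ---which by the leaf count above has $n+1$ non‑root leaves--- and remembers the leaf that used to be the second root; conversely one recovers the ordered pair of roots from the original root and the chosen leaf. Because the graphs are ordered, an automorphism must fix every vertex (it preserves the vertex order) and then every half‑edge (it preserves the half‑edge order at each vertex), so there are no nontrivial automorphisms and this is a genuine bijection of isomorphism classes; no automorphism bookkeeping is required. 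By Lemma~\ref{lem: count of rooted ordered trivalent trees} there are $\frac{(2n)!}{(n+1)!}3^n$ rooted ordered trivalent trees with $n$ vertices, each carrying $n+1$ non‑root leaves, so
\[
\#\mathcal{T}_{rr}(n,n)=(n+1)\cdot\frac{(2n)!}{(n+1)!}3^n=\frac{(2n)!}{n!}3^n\qquad(n\ge 1).
\]
Substituting into the displayed series and using the binomial expansion $(1-t)^{-1/2}=\sum_{n\ge 0}\binom{2n}{n}(t/4)^n$ at $t=12xy$ gives
\[
T_{rr}(x,y)=\sum_{n\ge 1}\binom{2n}{n}(3xy)^n=\frac{1}{\sqrt{1-12xy}}-1 .
\]
No step here is a real obstacle; the only points that need care are in the first paragraph ---correctly forcing $m=n$ so that only the diagonal $x^ny^n$ survives, and disposing of the degenerate $n=0$--- and making explicit, as above, that orderedness kills automorphisms so that the root‑forgetting map is an honest bijection rather than merely a surjection up to symmetry.
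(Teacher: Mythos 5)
Your proof is correct and follows essentially the same route as the paper's: both reduce to Lemma~\ref{lem: count of rooted ordered trivalent trees} by noting that a trivalent tree with $n$ vertices has $n+2$ leaves (forcing $m=n$) and that choosing the second root amounts to $n+1$ choices, yielding $\binom{2n}{n}(3xy)^n$ and hence $(1-12xy)^{-1/2}-1$. You simply make explicit two points the paper leaves implicit (the diagonal support of the series and the rigidity ensuring the root-forgetting map is a bijection), which is fine.
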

\begin{proof}
We use Lemma~\ref{lem: count of rooted ordered trivalent trees}.
A tree with $n$ vertices has $n+2$ leaves including the first root. 
Choosing a second root amounts to $n+1$ choices. 
Then the number of vertices is the same as the number of non-root leaves. 
We want an exponential generating function, which introduces a factor of $\frac{1}{n!}$. 
We arrive at the following series, as promised:
\[
  \sum_{n=1}^\infty
\frac{(n+1)}{n!}\frac{(2n)!}{(n+1)!}(3xy)^n=
\sum_{n=1}^\infty \binom{2n}{n}(3xy)^n.\qedhere
  \]  
\end{proof}

\subsection*{Zero Euler characteristic}
Next, we prove the following count.
\begin{lem}
\label{lem: 0 euler gen series}
The generating function $G_0(x,y)$ for the isomorphism classes of ordered trivalent graphs with $n$ and $m$ leaves such that each connected component has Euler characteristic zero is
\begin{equation}
\label{eq: 0 euler gen series}
G_0(x,y)=(1-12xy)^{-\frac{1}{4}}
\end{equation}
\end{lem}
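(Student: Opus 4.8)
The plan is to reduce the count of connected trivalent graphs of Euler characteristic zero to the two-rooted tree count $T_{rr}$ already established in Lemma~\ref{lem: insertion trees gen series}, and then pass from connected to possibly disconnected graphs via the exponential compositional formula (Corollary~\ref{cor: exponential compositional formula}). A connected trivalent graph of Euler characteristic zero has $|E|=|V|$ by Lemma~\ref{lem: euler trivalent} applied with $|L|=m$ — wait, more precisely it has exactly one independent cycle, i.e. $|\Gamma|$ is homotopy equivalent to a circle. So such a graph is a single embedded loop with a forest of trivalent trees hanging off it. The idea is to cut the loop: choosing an edge of the loop and cutting it produces a trivalent tree with two new leaves (the two half-edges of the cut edge) which are naturally the ``roots'' of a doubly-rooted tree, precisely the objects counted by $\mathcal{T}_{rr}$.

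The steps, in order: (1) First I would set up the bijection between connected ordered trivalent graphs of Euler characteristic zero with $n$ vertices and $m$ non-root leaves, equipped with a choice of one edge on the unique cycle, and ordered doubly-rooted trivalent trees with $n$ vertices and $m$ non-root leaves — this is essentially ``cut the chosen cycle-edge, the two resulting half-edges become the ordered pair of roots.'' One must be careful about the ordering of the two roots and about automorphisms: since a doubly-rooted tree has no nontrivial automorphism fixing the ordered root pair (cf. Remark~\ref{remark: ordered graphs have no automorphisms}), re-gluing the two roots into an edge can introduce a $\mathbb{Z}/2$ ambiguity (swapping the roots gives the same graph with the same marked edge), so the doubly-rooted count $T_{rr}$ overcounts (connected graph, marked cycle-edge) pairs by a factor of $2$. (2) Next, the number of edges on the unique cycle of such a connected graph: if the graph has $n$ vertices then it has $n$ edges total, but only some lie on the cycle. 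Hmm — this is the subtle point; the number of cycle-edges varies. So instead of ``marked cycle-edge'' I would mark a \emph{half-edge} oriented along the cycle, or better: observe directly that a connected Euler-characteristic-zero trivalent graph is obtained from a doubly-rooted tree by gluing the two roots, and the fiber of this gluing map over a given graph has size (number of ways to write it as a glued doubly-rooted tree) which by the cycle structure equals (length of the cycle) $\times 2 / (\text{length of cycle}) = $ ... this needs care. The cleanest route: a doubly-rooted tree, upon gluing its two ordered roots, yields a connected genus-one graph together with a distinguished oriented edge (the glued edge, oriented from first root to second). Conversely every such (graph, distinguished oriented cycle-edge) arises uniquely. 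So $\#\mathcal{T}_{rr}(n,m)$ counts connected ordered trivalent Euler-characteristic-zero graphs with $n$ vertices, $m$ non-root leaves, and a distinguished oriented edge on the cycle.

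(3) Then I would sum over the choice of distinguished oriented cycle-edge. Summing the generating function $\sum_\ell \ell$ where $\ell$ runs over... no: the point is that $T_{rr}(x,y) = \sum_{\text{conn. graph } \Gamma}(\text{number of oriented cycle-edges of }\Gamma)\frac{x^{|V|}}{|V|!}y^{m}$, and the number of oriented cycle-edges is $2c$ where $c$ is the cycle length, while the logarithmic derivative trick — differentiating the ``mark a cycle-edge'' operation — relates $T_{rr}$ to $2x\frac{d}{dx}$ of something, or to an integral. Concretely, if $G_0^c(x,y)=\sum f_n x^n/n!\, y^{m_n}$ is the connected count (unmarked), then marking an oriented cycle-edge multiplies the coefficient for $\Gamma$ by $2c(\Gamma)$; since every vertex of a genus-one graph lies on a tree hanging off the cycle and $c(\Gamma)$ is the number of vertices \emph{on} the cycle... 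Actually the slick identity I expect is $T_{rr}(x,y) = 2\,(xy)\frac{\partial}{\partial(xy)} \log\big(\text{something}\big)$ won't be needed: from $T_{rr}=(1-12xy)^{-1/2}-1$, one integrates. The relation ``number of cycle-edges, oriented'' generating function $=T_{rr}$ gives, upon dividing by the cycle-length weight and exponentiating to disconnect, exactly $G_0(x,y)=\exp(G_0^c(x,y))$ with $G_0^c$ recovered by integrating $T_{rr}/(2xy)$ in the variable $xy$: $\int_0^{xy}\frac{(1-12t)^{-1/2}-1}{2t}\,dt$. One checks $\frac{d}{ds}\left(-\frac14\log(1-12s)\right) = \frac{3}{1-12s}$ and matches with $\frac{(1-12s)^{-1/2}-1}{2s}$ after the substitution tracking cycle length versus vertex count — the precise bookkeeping of ``cycle length vs.\ $s\frac{d}{ds}$'' is the step requiring care. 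Then $G_0^c=-\frac14\log(1-12xy)$ and by Corollary~\ref{cor: exponential compositional formula}, $G_0 = \exp(G_0^c) = (1-12xy)^{-1/4}$, as claimed.

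\textbf{Main obstacle.} The delicate point is step (3): correctly relating the doubly-rooted-tree count $T_{rr}$ (which ``marks an oriented edge of the cycle'') to the unmarked connected count $G_0^c$. One must verify that marking an oriented cycle-edge in a connected genus-one trivalent graph multiplies its contribution by a factor equal to twice the cycle length, identify that factor with the operator $2\,(xy)\,\frac{d}{d(xy)}$ acting appropriately (i.e.\ that the cycle length equals the number of cycle-vertices, and track how these interact with the exponential-generating-function normalization $\frac{x^n}{n!}$), and handle the root-ordering $\mathbb{Z}/2$ symmetry and the $n=0$ (empty graph, trivial-tree) edge case consistently. Once the combinatorial identity $T_{rr}(x,y) = 2\,(xy)\,\frac{\partial G_0^c}{\partial (xy)}\big/(\text{appropriate weight})$ is pinned down, the rest is the elementary integration and one application of the exponential formula.
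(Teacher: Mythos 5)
Your step (1)--(2) is fine: cutting the unique cycle at a chosen oriented edge does give a bijection between ordered doubly-rooted trivalent trees and pairs (connected ordered trivalent graph of Euler characteristic zero, oriented edge on its cycle), so $T_{rr}$ is the generating function for connected genus-one graphs weighted by $2c(\Gamma)$, where $c(\Gamma)$ is the cycle length. The genuine gap is in step (3), and it is not just a bookkeeping issue: the operation $\int_0^{xy} T_{rr}(t)\,\frac{dt}{2t}$ divides the contribution of a graph by $2n$ (twice the \emph{total} vertex count), whereas the overcount factor you need to remove is $2c(\Gamma)$ (twice the number of vertices \emph{on the cycle}), and these differ because most vertices sit on pendant trees. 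Concretely, your proposed formula fails already at order two: the $(xy)^2$ coefficient of $\int_0^{xy}\bigl((1-12t)^{-1/2}-1\bigr)\frac{dt}{2t}$ is $\tfrac{27}{2}$, while the correct coefficient of $-\tfrac14\log(1-12xy)$ is $18$ (there are $36$ ordered two-vertex graphs: $18$ with cycle length $2$ and $18$ with a self-loop, contributing $108=18\cdot4+18\cdot2$ doubly-rooted trees, and $108/(2\cdot 2)=27\neq 36$). So the identity you hoped to ``pin down'' does not exist in that form.

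To repair this you must refine the count by cycle length, and doing so essentially forces the decomposition the paper actually uses: each of the $k$ cycle vertices carries a unique maximal pendant tree, giving a ``Y-tree'' with a special vertex and an ordered pair of its leaves (generating function $T_Y(z)=1-\sqrt{1-12z}$, computed from the rooted-tree count of Lemma~\ref{lem: count of rooted ordered trivalent trees}), and these are assembled into a necklace with dihedral weight $\tfrac{(k-1)!}{2}$; the compositional formula (Theorem~\ref{thm: compositional formula}) with outer series $1-\tfrac12\log(1-z)$ then yields $G_0^c(z)=-\tfrac14\log(1-12z)$, and exponentiating gives the lemma. Your instinct to reuse $T_{rr}$ is appealing but the paper's route through $T_Y$ and the cycle species is what actually closes the argument.
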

\begin{rem}
Having zero Euler characteristic means that the number of edges, $\frac{1}{2}(3n-m)$, must be equal to the number of vertices, $n$, which implies that the two indices $n$ and $m$ must be equal for the number to be non-zero. 
Then $G_0(x,y)$ can be written in terms of the single variable $z=xy$ as an exponential generating function.
\end{rem}
\begin{proof}
By Corollary~\ref{cor: exponential compositional formula}, it suffices to show that
\[
G_0^c(z)=-\frac{1}{4}\log(1-12z),
\]
where $G_0^c(z)$ is the exponential generating function for connected ordered trivalent graphs with $n$ vertices and zero Euler characteristic.

A graph of Euler characteristic zero has a unique cycle, so we can decompose the count of such graphs by the length of this cycle. 
Then each vertex in the cycle is part of a unique maximal subtree containing no other vertex of the cycle, and this exhausts the vertices of the graph. 
This implies that the count of such graphs with $n$ vertices is 
\begin{equation}
\label{eq counting euler char zero}
\sum_{\mathfrak{p}\in \mathfrak{P}(n)}\left(\prod_{\mathfrak{b}_i\in \mathfrak{p}}\#(\mathcal{T}_{Y}(|\mathfrak{b}_i|)) \right)\frac{|\mathfrak{p}-1|!}{2},
\end{equation}
where $\mathcal{T}_Y(n)$ counts ordered trivalent trees with $n$ vertices including a special vertex and an ordered pair of two distinct leaves of the special vertex.
This is the vertex in the unique cycle and the labeling corresponds to deciding ``which half-edge goes which way'' in that unique cycle. 
The $\frac{|\mathfrak{p}-1|!}{2}$ corresponds to the dihedral symmetry involved in assembling these trees.

We will use Theorem~\ref{thm: compositional formula}.
To use the formula directly requires one of the exponential generating functions to have constant term $1$ so we will artificially modify the series~\eqref{eq counting euler char zero} by putting a dummy $1$ in when $n=0$, instead of a $0$. 
Then $G_0^c(z)+1$ is the composition of the exponential generating function 
\[
1+\sum_{n=0}^\infty \frac{(n-1)!}{2n!}z^n
=
1-\frac{1}{2}\log(1-z)
\] 
with the exponential generating function
\[T_Y(z)=\sum \#\mathcal{T}_Y(n)\frac{z^n}{n!}\] 
for $\mathcal{T}_Y$.
Therefore to conclude we should compute $T_Y(z)$.

Counting $\mathcal{T}_Y(n)$ is the same as choosing a vertex and the ordered pair of half-edges---this is a factor of $6n$---and then assembling the rest of the $n-1$ vertices into a rooted ordered tree (including the possibility of the trivial tree if $n=1$). 
Then by Lemma~\ref{lem: count of rooted ordered trivalent trees}, there are $\frac{(2n-2)!}{n!}3^{n-1}$ such trees (here this is valid for $n\ge 1$), and combining with the $6n$ we get the exponential generating function
which has exponential generating function
\[
T_Y(z)=2\sum_{n=1}^\infty \frac{(2n-2)!}{(n-1)!n!}(3z)^n.
\]
Taking the derivative of this formal series yields $\frac{6}{\sqrt{1-12z}}$, so the exponential generating function is $-\sqrt{1-12z}+c$ for a constant of integration $c$ which is $1$ because there is no constant term in the exponential generating function $T_Y(z)$.

Then the composition formula of Theorem~\ref{thm: compositional formula} yields
\[
G_0^c(z)+1 = 1-\frac{1}{2}\log(1-(1-\sqrt{1-12z}))=1-\frac{1}{2}\log(\sqrt{1-12z})
\]
as desired.
\end{proof}


\subsection*{Positive Euler characteristic}
In this section, we prove the following count.
\begin{lem}
\label{lem: pos euler gen series}
The two variable generating function $G^c_+(x,y)$ for isomorphism classes of ordered trivalent trees with $n$ vertices is
\begin{equation}
\label{eq: pos euler gen series}
 G_+^c(x,y)=\frac{(1-\sqrt{1-12xy})^3(1+3\sqrt{1-12xy})}{864x^2}.
\end{equation}
\end{lem}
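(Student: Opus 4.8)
The plan is to first collapse the two variables into one. By Lemma~\ref{lem: euler trivalent} a connected trivalent graph has $\chi=\tfrac{|L|-|V|}{2}$, so a connected one of positive Euler characteristic is a tree, and such a tree with $n$ vertices has $n+2$ leaves. Hence $\#\mathcal{G}_+^c(n,m)=0$ unless $m=n+2$; writing $u_n$ for the number of isomorphism classes of ordered trivalent trees with $n$ vertices, this gives
\[
G_+^c(x,y)=\sum_{n\ge 1}u_n\,\frac{x^n}{n!}\,y^{n+2}=\frac{\Phi(xy)}{x^2},\qquad
\Phi(t):=\sum_{n\ge 1}u_n\,\frac{t^{n+2}}{n!},
\]
so the whole problem reduces to computing the one-variable series $\Phi$.

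Next I would pin down $\Phi$ by differentiating it twice. By Lemma~\ref{lem: count of rooted ordered trivalent trees} there are $\tfrac{(2n)!}{(n+1)!}3^n$ rooted ordered trivalent trees with $n$ vertices, and forgetting the root leaf presents this count as $n+2$ copies of $u_n$ (one rooting per leaf), so $u_n=\tfrac{1}{n+2}\tfrac{(2n)!}{(n+1)!}3^n$. Because $\tfrac{d^2}{dt^2}t^{n+2}=(n+2)(n+1)t^n$, the factor $n+2$ cancels and a short manipulation gives
\[
\Phi''(t)=\sum_{n\ge 1}\binom{2n}{n}(3t)^n=\frac{1}{\sqrt{1-12t}}-1,
\]
using $\sum_{n\ge 0}\binom{2n}{n}s^n=(1-4s)^{-1/2}$; equivalently this is $T_{rr}(x,y)$ read as a series in $t=xy$, by Lemma~\ref{lem: insertion trees gen series} and the observation that a doubly rooted tree is an unrooted one together with an ordered pair of distinct selected leaves. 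Since $\Phi$ vanishes to order at least $3$ at the origin, $\Phi(0)=\Phi'(0)=0$, and integrating twice yields
\[
\Phi'(t)=\frac16-t-\frac{\sqrt{1-12t}}{6},\qquad
\Phi(t)=\frac{t}{6}-\frac{t^2}{2}-\frac{1-(1-12t)^{3/2}}{108}.
\]

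The last step is to recognize this as the asserted closed form. Putting $S=\sqrt{1-12t}$, so that $t=\tfrac{1-S^2}{12}$ and $(1-12t)^{3/2}=S^3$, and clearing denominators over the common denominator $864$, one finds that $864\,\Phi$ equals $12(1-S^2)-3(1-S^2)^2-8(1-S^3)$, which expands to $1-6S^2+8S^3-3S^4$ and factors as $(1-S)^3(1+3S)$; dividing by $x^2$ then gives the claimed formula. The only delicate points I anticipate are the combinatorial bookkeeping—keeping the $y^{2}$ versus $x^{-2}$ shift straight and confirming that the rooted and doubly rooted counts are exactly $(n+2)$-fold and $(n+2)(n+1)$-fold multiples of $u_n$, in line with the conventions used elsewhere in the appendix—together with correctly fixing the two constants of integration; once the substitution $S=\sqrt{1-12t}$ is made and the factor $1-S$ pulled out, the concluding algebra is routine.
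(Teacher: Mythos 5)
Your proposal is correct and follows essentially the same route as the paper: reduce to the one-variable series in $t=xy$ via the $m=n+2$ constraint, obtain the unrooted count $u_n=\tfrac{1}{n+2}\tfrac{(2n)!}{(n+1)!}3^n$ from Lemma~\ref{lem: count of rooted ordered trivalent trees}, recognize the second derivative as $\sum\binom{2n}{n}(3t)^n=(1-12t)^{-1/2}-1$, and integrate twice with vanishing constants. The only difference is that you carry out explicitly the final substitution $S=\sqrt{1-12t}$ and the factorization $1-6S^2+8S^3-3S^4=(1-S)^3(1+3S)$, which the paper leaves as a one-line remark; all of your intermediate formulas check out.
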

\begin{proof}
Again we use Lemma~\ref{lem: count of rooted ordered trivalent trees}. 
Since ordered trees have only the identity automorphism, this means that we can count the \emph{unrooted} ordered trivalent trees by dividing by the $n+2$ possible choices of root, so that 
\[
G_+^c(x,y)=y^2\sum_{n=1}^\infty \frac{(2n)!}{(n+2)!}(3xy)^n.
\]
It is standard that
\[
\sum_{n=0}^\infty \frac{(2n)!}{n!}(3z)^n = \frac{1}{\sqrt{1-12z}}
\]
and this is $1$ less than the second derivative of $x^2G_+^c(x,y)$ viewed as a series in the single variable $z=xy$.
Integrating twice with respect to $z$, we find that 
\[
x^2G_+^c(x,y)= \frac{(1-12xy)^{\frac{3}{2}}}{108}-\frac{(xy)^2}{2} + \frac{xy}{6}-\frac{1}{108},
\]
where the constants of integration have been chosen to give the correct overall values (i.e., zero) for $(xy)^0$ and $(xy)^1$ on the right side.
Rewriting in terms of $\sqrt{1-12xy}$ (which will be more convenient later) yields the result.
\end{proof}

\section{Proof of the key lemma}
\label{appendix: key lemma proof}
In this section we prove Lemma~\ref{lem: evaluation of master series}, which is about a certain ``evaluation'' of the master series $\masterseries$.

For this purpose, we will use the calculations of the constituent generating functions of the master series $\masterseries$ from Appendix~\ref{app:series} (i.e., Lemmas~\ref{lem: neg euler gen series leaf free},~\ref{lem: insertion trees gen series},~\ref{lem: 0 euler gen series}, and~\ref{lem: pos euler gen series}).
These computations in hand, we perform the evaluation of Lemma~\ref{lem: evaluation of master series} variable by variable, starting with $z$ and $u$.

\begin{notation}
We will use $\sqrt{1-12xy}$ often, so we use $Q$ as shorthand for it.
\end{notation}
\begin{lem}
\label{lem: ev zu}
For any $n\ge 0$, the ``evaluation'' of the series in $x$, $y$, $z$, and $u$
\[
\frac{e^{zT_{r,r}(x,y)}}{G^{\lf}_-(xu)}
\]
obtained by linearly replacing $z^{n_3}u^{n_5}$ with $n_3!\binom{\frac{3}{2}n_5+3n}{n_3}$
is the following series in $x$ and $y$:
\[
\frac{Q^{-3n}}{G^{\lf}_-(xQ^{-\frac{3}{2}})}.
\]
\end{lem}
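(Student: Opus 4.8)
The plan is to carry out the substitution variable by variable, exploiting the fact that the two factors involving $z$ and $u$ are coupled only through the binomial coefficient $\binom{\frac{3}{2}n_5+3n}{n_3}$. First I would expand $e^{zT_{rr}(x,y)}$ as $\sum_{n_3\ge 0}\frac{z^{n_3}}{n_3!}T_{rr}(x,y)^{n_3}$ and $\frac{1}{G^{\lf}_-(xu)}$ as a power series in $u$; since $G^{\lf}_-$ is a series in even powers by Lemma~\ref{lem: neg euler gen series leaf free}, only even $n_5$ contribute, but this will not matter for the manipulation. The key observation is that after the linear substitution $z^{n_3}u^{n_5}\mapsto n_3!\binom{\frac32 n_5+3n}{n_3}$, the factor $n_3!$ cancels the $\frac{1}{n_3!}$ coming from the exponential, so the $z$-expansion collapses to $\sum_{n_3}\binom{\frac32 n_5 + 3n}{n_3}T_{rr}(x,y)^{n_3}$, which by the binomial theorem equals $(1+T_{rr}(x,y))^{\frac32 n_5 + 3n}$ — crucially, this is a \emph{finite} sum when $\frac32 n_5+3n$ is a nonnegative integer, so well-definedness is automatic.

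Next I would substitute the closed form $1+T_{rr}(x,y)=Q^{-1}$ from Lemma~\ref{lem: insertion trees gen series} (recall $Q=\sqrt{1-12xy}$). This turns the $z$-and-$u$-dependent part into $Q^{-(\frac32 n_5+3n)}=Q^{-3n}\cdot (Q^{-\frac32})^{n_5}$. Then I would reassemble the sum over $n_5$: the coefficient of $u^{n_5}$ in $\frac{1}{G^{\lf}_-(xu)}$ was multiplied by $(Q^{-\frac32})^{n_5}$, which is exactly the effect of substituting $u\mapsto Q^{-\frac32}$ in that series, i.e.\ replacing $xu$ by $xQ^{-\frac32}$. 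The overall factor $Q^{-3n}$ pulls out front. This yields precisely $Q^{-3n}/G^{\lf}_-(xQ^{-\frac32})$, as claimed.

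The main obstacle — really the only subtle point — is justifying that all of these formal manipulations are legitimate, i.e.\ that the resulting object is a well-defined formal power series in $x$ and $y$. The danger is that substituting $u\mapsto Q^{-\frac32}$ is substituting a series with nonzero constant term (indeed $Q^{-\frac32}$ has constant term $1$) into $G^{\lf}_-$, which is an infinite series; naively this produces an infinite sum contributing to each coefficient. The resolution is the same one used implicitly throughout: the substitution of Lemma~\ref{lem: evaluation of master series} is applied monomial-by-monomial to the master series, and on each fixed total monomial $x^{n_1}y^{n_2}z^{n_3}w^{n_4}u^{n_5}$ the exponents are bounded (there are only finitely many trivalent graphs of bounded size), so only finitely many terms are ever combined before the substitution; the identity $\sum_{n_3}\binom{N}{n_3}T_{rr}^{n_3}=(1+T_{rr})^N$ is a genuine polynomial identity for each fixed $N=\frac32 n_5+3n$, and the reindexing in $u$ is term-by-term in $n_5$. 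I would phrase the argument so that we never actually substitute into an infinite series: instead we verify the coefficient of each $x^a y^b u^{n_5}$ matches on both sides, where on the right $G^{\lf}_-(xQ^{-3/2})^{-1}$ is interpreted as the composite of formal series $G^{\lf}_-(v)^{-1}\circ(v\mapsto xQ^{-3/2})$, legitimate because $xQ^{-3/2}$ has no constant term in $x$ (its lowest-order term in $x$ is $x$ itself, since $Q^{-3/2}=1+O(xy)$). With that bookkeeping pinned down, the computation is the routine chain described above.
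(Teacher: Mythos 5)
Your proof is correct and follows essentially the same route as the paper's: expand $e^{zT_{rr}}$, let the substituted $n_3!$ cancel the $\frac{1}{n_3!}$, apply the binomial theorem with $1+T_{rr}(x,y)=Q^{-1}$ to get $Q^{-(\frac32 n_5+3n)}$, and reabsorb the $(Q^{-3/2})^{n_5}$ factor as the substitution $u\mapsto Q^{-3/2}$ in $1/G^{\lf}_-(xu)$. The paper's proof is a three-line version of exactly this computation; your extra care about well-definedness (noting that $Q^{-3/2}-1$ and $1/Q-1$ have no constant term in $x$) is a harmless and reasonable addition.
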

\begin{proof}
First, by Lemma~\ref{lem: insertion trees gen series},
\begin{align*}
e^{zT_{r,r}(x,y)}&=\sum_{j=0}^\infty \frac{z^j}{j!}\left(\frac{1}{Q}-1\right)^j
\end{align*}
so substituting $j!\binom{\frac{3}{2}n_5+3n}{j}$ for $u^{n_5}z^j$ yields
\[
\sum_{j=0}^\infty \binom{\frac{3}{2}n_5+3n}{j}\left(\frac{1}{Q}-1\right)^j
= \left(\frac{1}{Q}\right)^{\frac{3}{2}n_5+3n}.\qedhere
\]
\end{proof}
Next we turn to $w$, which is easy.
\begin{lem}
\label{lem: ev w}
The evaluation of the series $\frac{1}{e^{wG_0^c(x,y)}}$ in $w$, $x$, and $y$ at $w=3v+6n-3$ is $Q^{\frac{3v-3}{2} + 3n}$.
\end{lem}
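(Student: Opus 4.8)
The plan is a short computation that reduces everything to the closed form for $G_0^c$ obtained in Appendix~\ref{app:series}. By Lemma~\ref{lem: 0 euler gen series} we have $G_0(x,y)=(1-12xy)^{-1/4}$, and since $G_0=\exp(G_0^c)$ (the generating function for all such graphs is the exponential of the one for connected graphs, as in Corollary~\ref{cor: exponential compositional formula}), this gives $G_0^c(x,y)=-\tfrac14\log(1-12xy)$. Hence
\[
\frac{1}{e^{wG_0^c(x,y)}}=e^{-wG_0^c(x,y)}=\exp\Bigl(\tfrac{w}{4}\log(1-12xy)\Bigr)=(1-12xy)^{w/4}.
\]
Because $G_0^c$ has no constant term, the middle expressions are genuine formal power series in $w$ (with polynomial-in-$w$ coefficients) and in the product $xy$, so the right-hand side is simply their binomial resummation, and ``the evaluation in $w$'' is nothing more than ordinary substitution of a value for the variable $w$.

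First I would substitute $w=3v+6n-3$ and then rewrite the resulting exponent using the shorthand $Q=\sqrt{1-12xy}$:
\[
(1-12xy)^{(3v+6n-3)/4}=\bigl((1-12xy)^{1/2}\bigr)^{(3v+6n-3)/2}=Q^{(3v+6n-3)/2}=Q^{\frac{3v-3}{2}+3n},
\]
the last step being the arithmetic identity $(3v+6n-3)/2=\tfrac{3v-3}{2}+3n$. This is exactly the asserted formula, so the proof is complete.

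I do not expect any genuine obstacle here: apart from invoking the appendix's evaluation of $G_0^c$, everything is bookkeeping of exponents, which is why the text can call this step ``easy''. The one point that merits a sentence is the consistency of the phrase ``evaluation at $w=3v+6n-3$'' with the monomial substitution prescribed in Lemma~\ref{lem: evaluation of master series} and with the preceding step Lemma~\ref{lem: ev zu}: that substitution replaces $w^{n_4}$ by $(3n_1+6n-3)^{n_4}$ with $n_1$ the ambient exponent of $x$, and after the $z$- and $u$-variables have been dealt with, the exponent of $x$ contributed by the remaining factors is precisely the parameter we are calling $v$. Once that is granted, the computation displayed above is immediate.
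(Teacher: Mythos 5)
Your proof is correct and follows exactly the paper's route: identify $G_0^c=\log G_0=-\tfrac14\log(1-12xy)$ from Lemma~\ref{lem: 0 euler gen series}, so that $e^{-wG_0^c}=Q^{w/2}$, and then substitute $w=3v+6n-3$. The extra sentence reconciling ``evaluation at $w=3v+6n-3$'' with the monomial substitution $w^{n_4}\mapsto(3n_1+6n-3)^{n_4}$ is a reasonable clarification but does not change the argument.
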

\begin{proof}
By Lemma~\ref{lem: 0 euler gen series}, $G_0(x,y)=Q^{-\frac{1}{2}}$, so
\[
e^{-w\log(G_0(x,y))}=G_0(x,y)^{-w} = Q^{\frac{w}{2}}.\qedhere
\]
\end{proof}
We can combine these to achieve the following:
\begin{cor}
\label{cor: simplified evaluation}
Define  a series in $\simplifiedmasterseriestwo$ in $x$ by starting from the following series in $x$, $y$ and $v$
\[
\frac{Q^{\frac{3v-3}{2}}}{G_+(x,y)}
\]
and performing the following linear substitution:
\[
x^{n_1}y^{n_2}v^{n_3}\mapsto
\begin{cases}
0 & n_2\text{ odd,}\\
(n_2-1)!! x^{n_1}(n_1)^{n_3} & n_2\text{ even.} 
\end{cases}
\]
Then the simplified master series $\simplifiedmasterseries$ of Lemma~\ref{lem: evaluation of master series} is related to $\simplifiedmasterseriestwo$ via
\[
\simplifiedmasterseries(x)=\frac{\simplifiedmasterseriestwo(x)}{G_-^{\lf}(x)}.
\]
\end{cor}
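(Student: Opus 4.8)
The plan is to evaluate the substitution that defines $\simplifiedmasterseries$ in Lemma~\ref{lem: evaluation of master series} by applying it to the five factors of the master series $\masterseries$ of Definition~\ref{definition: master series} in stages: first to the variables $z$ and $u$, then to $w$, and finally to $x$, $y$, and the auxiliary variable $v$. This is legitimate because the substitution of Lemma~\ref{lem: evaluation of master series} applied to a monomial $x^{n_1}y^{n_2}z^{n_3}w^{n_4}u^{n_5}$ splits accordingly: the factor $n_3!\binom{\frac{3}{2}n_5+3n}{n_3}$ depends only on the $z$- and $u$-exponents; the factor $(3n_1+6n-3)^{n_4}$ is recovered by first replacing $w^{n_4}$ by $(3v+6n-3)^{n_4}$ and then, in the last stage, setting $v=n_1$, using the binomial theorem; and the factor $(n_2-1)!!$ (taken to be $0$ when $n_2$ is odd) depends only on the $y$-exponent. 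Since the first two stages act only on $z,u$ and on $w$ respectively, they carry each monomial to monomials of the same $x$- and $y$-degree, so the exponents ``$n_1$'' and ``$n_2$'' read off in the last stage are exactly the ones dictated by Lemma~\ref{lem: evaluation of master series}.

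Carrying out the first two stages on $\masterseries$: by Lemma~\ref{lem: ev zu}, the factor $e^{zT_{rr}(x,y)}/G^{\lf}_-(xu)$ becomes $Q^{-3n}/G^{\lf}_-(xQ^{-\frac{3}{2}})$; by Lemma~\ref{lem: ev w}, the factor $1/e^{wG_0^c(x,y)}$ becomes $Q^{\frac{3v-3}{2}+3n}$; and the factor $1/G_+(x,y)$ is untouched. The powers $Q^{-3n}$ and $Q^{3n}$ cancel, so after these two stages $\masterseries$ has become $Q^{\frac{3v-3}{2}}/(G^{\lf}_-(xQ^{-\frac{3}{2}})\,G_+(x,y))$. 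It remains to apply the residual substitution, namely the $\QQ$-linear map $\Phi$ sending $x^{n_1}y^{n_2}v^{n_3}$ to $(n_2-1)!!\,x^{n_1}(n_1)^{n_3}$ for $n_2$ even and to $0$ for $n_2$ odd, and to show that its value on $Q^{\frac{3v-3}{2}}/(G^{\lf}_-(xQ^{-\frac{3}{2}})\,G_+(x,y))$ equals $G^{\lf}_-(x)^{-1}$ times its value on $Q^{\frac{3v-3}{2}}/G_+(x,y)$, which is $\simplifiedmasterseriestwo(x)$ by definition.

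For this, write $G^{\lf}_-(t)^{-1}=\sum_{m\ge 0}d_m t^{2m}$ --- only even powers of $t$ occur, by Lemma~\ref{lem: neg euler gen series leaf free} --- so that $G^{\lf}_-(xQ^{-\frac{3}{2}})^{-1}=\sum_m d_m x^{2m}Q^{-3m}$, and absorb each $Q^{-3m}$ into the exponent using $Q^{-3m}Q^{\frac{3v-3}{2}}=Q^{\frac{3(v-2m)-3}{2}}$ to get
\[
\frac{Q^{\frac{3v-3}{2}}}{G^{\lf}_-(xQ^{-\frac{3}{2}})\,G_+(x,y)}
=\sum_{m\ge 0}d_m\,x^{2m}\left[\frac{Q^{\frac{3v-3}{2}}}{G_+(x,y)}\right]_{v\mapsto v-2m}.
\]
The key elementary identity is then $\Phi(x^{2m}F(x,y,v-2m))=x^{2m}\Phi(F(x,y,v))$ for every formal series $F$: on a monomial $c\,x^ay^bv^e$ of $F$ with $b$ even, the substitution $v\mapsto v-2m$ together with the extra factor $x^{2m}$ produces $c\,x^{a+2m}y^b(v-2m)^e$, and
\[
\Phi\bigl(c\,x^{a+2m}y^b(v-2m)^e\bigr)
=c\,(b-1)!!\,x^{a+2m}\sum_{f=0}^{e}\binom{e}{f}(a+2m)^f(-2m)^{e-f}
=c\,(b-1)!!\,x^{a+2m}a^e
\]
by the binomial theorem, which is exactly $x^{2m}$ times $\Phi(c\,x^ay^bv^e)$; both sides vanish when $b$ is odd, and the $y$-part of $\Phi$ is unaffected by the shift. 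Applying this termwise and summing over $m$ (which converges, since each power of $x$ receives contributions from finitely many $m$) gives $\Phi$ of the displayed sum equal to $\bigl(\sum_m d_m x^{2m}\bigr)\simplifiedmasterseriestwo(x)=\simplifiedmasterseriestwo(x)/G^{\lf}_-(x)$, as desired. Well-definedness of $\simplifiedmasterseriestwo$ as a series in $x$ is clear, since for each power of $x$ only finitely many monomials, all of bounded $y$- and $v$-degree, occur in $Q^{\frac{3v-3}{2}}/G_+(x,y)$.

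The main obstacle is the bookkeeping of the first paragraph: one must check carefully that breaking the one-step substitution of Lemma~\ref{lem: evaluation of master series} into these three stages reproduces it faithfully, the crux being that --- since the first two stages preserve $x$- and $y$-degrees --- the ``$n_1$'' supplied to $v$ in the last stage coincides with the total $x$-degree appearing in the factor $(3n_1+6n-3)^{n_4}$. Once this is in place, the binomial collapse of the last paragraph is routine.
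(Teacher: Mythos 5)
Your proposal is correct and follows essentially the same route as the paper: reduce via Lemmas~\ref{lem: ev zu} and~\ref{lem: ev w} to the substitution applied to $Q^{\frac{3v-3}{2}}/\bigl(G_+(x,y)G_-^{\lf}(xQ^{-\frac{3}{2}})\bigr)$, expand $1/G_-^{\lf}(xQ^{-\frac{3}{2}})$ as a power series in $x$, absorb the resulting powers of $Q$ into a shift of $v$, and observe that the shift exactly compensates for the added $x$-degree so that the $1/G_-^{\lf}(x)$ factor decouples. You are somewhat more explicit than the paper on two points the paper leaves implicit --- the legitimacy of staging the substitution (since the $z,u$ and $w$ stages preserve $x$- and $y$-degrees) and the binomial-theorem verification of $\Phi(x^{2m}F(x,y,v-2m))=x^{2m}\Phi(F(x,y,v))$ --- but these are elaborations of the same argument, not a different one.
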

\begin{proof}
By Lemmas~\ref{lem: ev zu} and~\ref{lem: ev w}, the simplified master series $\simplifiedmasterseries$ is obtained from 
\[
\frac{Q^{\frac{3v-3}{2}}}{G_+(x,y)G_-^{\lf}(xQ^{-\frac{3}{2}})}
\]
by performing the indicated substitution.
In order to compare terms, write 
\begin{align*}
\frac{1}{G^{\lf}_-(x)}&=\sum_n \gamma_n x^{n}.
\end{align*}
Then the quotient can be written
\[
\sum_{n} \gamma_n \frac{Q^{\frac{3(v-n)-3}{2}}}{G_+(x,y)} x^{n}. 
\]
If we write $v'=v-n$, then we are replacing $v'$ in the $x^{n_1+n}$ term of the series expansion of the expression
\[
\gamma_n \frac{Q^{\frac{3v'-3}{2}}}{G_+(x,y)}x^{n}
\]
with $n_1$ (and also doing a substitution for $y$).
That's the same as replacing $v$ in the $x^{n_1}$ term of the series expansion of the expression
\[
\gamma_n \frac{Q^{\frac{3v-3}{2}}}{G_+(x,y)}
\]
with $n_1$, doing the substitution for $y$, and then multiplying by $x^n$ at the end. Then the $\gamma_n$ and $x^n$ terms have become decoupled and the overall effect is to do the substitution on the simpler quotient $\frac{Q^{\frac{3v-3}{2}}}{G_+(x,y)}$ and then divide by $G_-^{\lf}(x)$.
\end{proof}
Now the key lemma can be derived from the following technical result.
\begin{lem}
\label{lem: diagonal coefficient}
The coefficient of $(xy)^{2n}$ in the expansion of 
\[Q^{\frac{6n-3}{2}}(1+Q)^{2n+1}(2Q+1)^{-\frac{2n+1}{2}}\] 
is
\begin{equation*}
\frac{(6n)!n!}{(3n)!(2n)!(2n)!}\frac{2}{\sqrt{3}}\left(\frac{1}{3}\right)^n.
\end{equation*}
\end{lem}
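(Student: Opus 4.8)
The plan is to regard both sides as formal power series in the single variable $t:=xy$, so that $Q=\sqrt{1-12t}=1-6t+O(t^2)$, and to abbreviate by $F(Q):=Q^{(6n-3)/2}(1+Q)^{2n+1}(2Q+1)^{-(2n+1)/2}$ the expression whose $t^{2n}$-coefficient we want. Since $Q$ has constant term $1$ and $2Q+1$ has constant term $3$, the half-integer powers are unambiguous power series. The key substitution is $s:=1-Q$, which has no constant term and satisfies $12t=1-Q^2=s(2-s)$, i.e.
\[
s=t\cdot R(s),\qquad R(s):=\frac{12}{2-s},\qquad R(0)=6\neq 0,
\]
so $s=s(t)$ is the compositional inverse of $t\mapsto \tfrac{1}{12}s(2-s)$ and the Lagrange--B\"urmann inversion formula applies.

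First I would apply Lagrange inversion in the form $[t^m]H(s(t))=[s^m]\big(H(s)R(s)^m(1-sR'(s)/R(s))\big)$ to $H=G$, where $G(s):=F(1-s)=(1-s)^{(6n-3)/2}(2-s)^{2n+1}(3-2s)^{-(2n+1)/2}$ (using $1+Q=2-s$ and $2Q+1=3-2s$) and $m=2n$. A short computation gives $1-sR'(s)/R(s)=\frac{2(1-s)}{2-s}$, so the powers of $(2-s)$ cancel and the problem collapses to
\[
[t^{2n}]F(Q(t))=2\cdot 12^{2n}\,[s^{2n}]\Big((1-s)^{(6n-1)/2}(3-2s)^{-(2n+1)/2}\Big).
\]

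Next I would expand the two remaining factors as generalized binomial series, $(1-s)^{(6n-1)/2}=\sum_k(-1)^k\binom{(6n-1)/2}{k}s^k$ and $(3-2s)^{-(2n+1)/2}=3^{-(2n+1)/2}\sum_j\binom{-(2n+1)/2}{j}\big(-\tfrac{2}{3}\big)^j s^j$, so that the coefficient of $s^{2n}$ is a finite convolution over $j+k=2n$. Converting the half-integer generalized binomial coefficients into ordinary factorials via double factorials --- using $(6n-1)(6n-3)\cdots(6n-2k+1)=\frac{(6n)!\,(3n-k)!}{2^{k}(6n-2k)!\,(3n)!}$ and the analogous identity for the other factor --- the product $\binom{(6n-1)/2}{2n-j}\binom{-(2n+1)/2}{j}$ collapses: the factors $(n+j)!$ and $(2n+2j)!$ cancel between numerator and denominator, leaving $(-1)^j$ times the $j$-independent quantity $\frac{(6n)!\,n!}{4^{2n}(3n)!(2n)!}$. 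This cancellation is the crux of the lemma.

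Finally, since each summand is then proportional to $\binom{2n}{j}\big(-\tfrac23\big)^j$ with a $j$-independent proportionality constant, the binomial theorem gives $\sum_{j=0}^{2n}\binom{2n}{j}\big(-\tfrac23\big)^j=\big(\tfrac13\big)^{2n}$; collecting the remaining powers of $3$, $4$ and $12$ (using $12^{2n}/(4^{2n}3^{2n})=1$ and $3^{-(2n+1)/2}=3^{-n}/\sqrt3$) yields exactly $\frac{2}{\sqrt3}\,\frac{(6n)!\,n!}{(3n)!(2n)!(2n)!}\,3^{-n}$, as claimed. The case $n=0$ is immediate by setting $t=0$. The main obstacle is not conceptual but the bookkeeping: keeping the branches of the half-integer powers consistent through the inversion, and arranging the powers of $2$, $3$, $12$ so that --- thanks to the cancellation above --- the convolution reduces to one application of the binomial theorem.
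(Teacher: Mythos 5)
Your proposal is correct, and it takes a genuinely different route from the paper. You fix $n$, set $t=xy$, $s=1-Q$, observe $12t=s(2-s)$, and apply one-variable Lagrange--B\"urmann inversion; the factor $1-sR'(s)/R(s)=\tfrac{2(1-s)}{2-s}$ cancels all powers of $(2-s)$, reducing the problem to $2\cdot 12^{2n}[s^{2n}]\bigl((1-s)^{(6n-1)/2}(3-2s)^{-(2n+1)/2}\bigr)$, and the resulting convolution collapses because $\binom{(6n-1)/2}{2n-j}\binom{-(2n+1)/2}{j}=\frac{(6n)!\,n!}{4^{2n}(3n)!(2n)!(2n)!}\binom{2n}{j}$ up to sign, so the binomial theorem finishes it. I verified the double-factorial identities, the cancellation of $(n+j)!$ and $(2n+2j)!$, the sign bookkeeping (the $(-1)^{2n-j}$ from $(-s)^k$ combines with the $(-1)^j$ from the collapsed product to give $+1$), and the final powers of $2$, $3$, $12$; the $n=1$ case also checks numerically ($20/\sqrt{3}$ both ways). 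The paper instead packages all $n$ into a two-variable series, extracts its diagonal by the Hautus--Klarner residue method, arrives at the implicit parametrization $U(z)=2\cos\bigl(\tfrac{2}{3}\cos^{-1}(6z)\bigr)$, and identifies the answer by showing both sides satisfy the same second-order ODE, matching a scalar at the end. Your argument is more elementary and fully explicit --- every step is a closed-form manipulation, with no ODE verification or constant left to be ``checked by hand'' --- while the paper's method is more systematic for diagonal extractions in general and yields the trigonometric closed form for $U(z)$ as a byproduct. The only point worth making explicit in a write-up is that $s(t)$ is the branch with $s(0)=0$ (so that $s=1-Q$) and that all half-integer powers are expanded about their positive values at $s=0$, which you already flag.
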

Assuming this lemma, we finish the main proof.
\begin{proof}[Proof of Lemma~\ref{lem: evaluation of master series}]
By Corollary~\ref{cor: simplified evaluation}, it suffices to perform the substitution of that corollary on the series expansion of the expression
\[
\frac{Q^{\frac{3v-3}{2}}}{G_+(x,y)}
\]
and verify that the result is equal to $G_-^{\lf}(x)$.
Lemma~\ref{lem: pos euler gen series} says 
\[
G_+(x,y)=\exp\left(\frac{(1-Q)^3(1+3Q)}{864x^2}\right).
\]
So the parity of $x$ and $y$ in the series in question will always be the same. 
Then after substituting for $y$, there will be no odd powers of $x$, so we may confine ourselves to the even powers.
In fact, the power of $y$ and the power of $x$ differ only in terms of the $x^2$ in the denominator of $G^c_+(x,y)$ so if we are interested in the $2n_1$ power of $x$ in the substitution, it is the $x^{2n_1}$ term of the series
\[
Q_x^{\left(\frac{6n_1-3}{2}\right)}\sum_{n=0}^\infty \frac{1}{n!}(2n_1+2n-1)!!\left(-\frac{(1-Q_x)(1+3Q_x)}{864x^2}\right)^n,
\]
where $Q_x=\sqrt{1-12x}$. 
But
\[
\sum_{n=0}^\infty \frac{(2n_1+2n-1)!!}{n!}x^n=\frac{(2n_1-1)!!}{(1-2x)^{\frac{2n_1+1}{2}}}
\]
so the series of interest is
\[
Q_x^{\left(\frac{6n_1-3}{2}\right)}\frac{(2n_1-1)!!}{\left(1+\frac{(1-Q_x)(1+3Q_x)}{432x^2}\right)^{\frac{2n_1+1}{2}}}.
\]
By direct manipulation we have the equation
\[
1+
\frac{(1-Q_x)^3(1+3Q_x)}{432x^2}
=
\frac{4}{3}\frac{(1+2Q_x)}{(1+Q_x)^2}.
\]
Then finally we are interested in the $x^{2n_1}$ coefficient of
\[
(2n_1-1)!!\left(\frac{3}{4}\right)^{\frac{2n_1+1}{2}}\frac{Q_x^{\frac{6n_1-3}{2}}(1+Q_x)^{2n_1+1}}{(1+2Q_x)^{\frac{2n_1+1}{2}}}.
\]
By Lemma~\ref{lem: diagonal coefficient}, this coefficient is
\[
(2n_1-1)!!\left(\frac{3}{4}\right)^{\frac{2n_1+1}{2}}\frac{(6n_1)!n_1!}{(3n_1)!(2n_1)!(2n_1)!}\frac{2}{\sqrt{3}}\left(\frac{1}{3}\right)^{n_1}
=
\frac{(6n_1-1)!!}{(2n_1)!},
\]
which is by Lemma~\ref{lem: neg euler gen series leaf free} the $x^{2n_1}$ coefficient of $G^{\lf}_-(x)$, as desired.
\end{proof}
It only remains to prove Lemma~\ref{lem: diagonal coefficient}.

\begin{proof}[Proof of Lemma~\ref{lem: diagonal coefficient}]
Let us begin with an overview. The proof will use a method of Hautus and Klarner~\cite{HK} for extracting the ``diagonal'' of an analytic series in two variables. 
One expositional option would be to build such a two-variable series whose diagonal was precisely the generating function of the numbers we care about.
We have chosen instead to build a series whose \emph{even} diagonal entries are the numbers we care about and whose odd entries are irrelevant for our purposes. This allows us a little more flexibility in the shape of the series we build and we can apply a special case of Hautus and Klarner.
The price we pay for this is that after finishing the computation we have an extra step to extract the even degree coefficients.

Let us begin. 
It is convenient to write our series in terms of $U=(1-12x)^{-\frac{1}{2}}$ (i.e., the reciprocal of $Q_x$).
Consider the following series in $x$ and $t$:
\begin{align*}
\label{eq: two variable series}
F(x,t)&=\sum_{n=0}^\infty \frac{1}{U}^{\frac{3n-3}{2}}\left(1+\frac{1}{U}\right)^{n+1}\left(\frac{2}{U}+1\right)^{-\frac{n+1}{2}}t^n
\\&=U(U+1)(U+2)^{-\frac{1}{2}}\sum_{n=0}^\infty \left(\frac{U+1}{U^2\sqrt{U+2}}t\right)^n
\end{align*}
Note that the even ``diagonal coefficients'', i.e., the coefficients of $x^{2n}t^{2n}$ in this series, are indeed the quantities of interest to us.

The auxiliary functions 
\begin{align*}
W(x)&=\frac{U(U+1)}{\sqrt{U+2}},\\
Y(x)&=\frac{U+1}{U^2\sqrt{U+2}}=\frac{W(x)}{U^3}
\end{align*} 
are themselves analytic in a neighborhood of $x=0$. 
Then the special case of the result of Hautus and Klarner~\cite[Section 4]{HK} allows us to extract the one variable diagonal series associated to $F(x,t)$. 
That is, if 
\[
F(x,t)=\sum_{m=0}^\infty\sum_{n=0}^\infty \alpha_{m,n}x^m t^n,
\]
then we can extract
\[
F_\Delta(z)=\sum_{n=0}^\infty \alpha_{n,n}z^n
\]
(the variable $z$ here is unrelated to the variable $z$ used in the master series and other similar series earlier).
The recipe of Hautus and Klarner says that $F_\Delta(z)$ is given via a residue computation which simplifies in the case of interest to be
\[
F_\Delta(z)=\frac{W(x)}{1-zY'(x)}\Biggr|_{z=\frac{x}{Y(x)}},
\]
where $x$ is defined implicitly by $zY(x)=x$ as an analytic function of $z$ in a neighborhood of the origin.
It's convenient for us to rewrite this 
\[
F_\Delta(z)=\frac{W(x)/Y(x)}{\frac{dz}{dx}}\Biggr|_{z=\frac{x}{Y(x)}}.
\]
Now using $x=\frac{U^2-1}{12U^2}$, we see that the equality $zY(x)=x$ occurs at
\begin{equation}
\label{implicit z equation}
12z= \frac{U^2-1}{U^2}\frac{U^2\sqrt{U+2}}{U+1}=(U-1){\sqrt{U+2}}.
\end{equation}
and so in our case, we can use the facts that $W(x)=U^3 Y(x)$ and that $\frac{dU}{dx}=6U^3$ along with the chain rule to get
\[
F_\Delta(z)=\frac{U^3}{\frac{dz}{dU}\cdot 6U^3}\Biggr|_{z=\frac{x}{Y(x)}}=\frac{1}{6\frac{dz}{dU}}\Biggr|_{z=\frac{x}{Y(x)}},
\]
which is the formal value of $U'(z)/6$ where $U$ and $z$ are related by Equation~\eqref{implicit z equation}.
Now by trigonometric identities or by solving a cubic, we find that near $(z=0,U=1)$ we can give an explicit form for the inverse function:
\[
U(z)=2\cos\left(\frac{2}{3}\cos^{-1}(6z)\right),
\]
where we take the standard branch of the arccosine.

It is a tedious but elementary verification that both $U'(z)/6$ and the convergent series
\[
\sum_{n=0}^\infty \frac{(6n)!n!}{(3n)!(2n)!(2n)!}\left(\frac{z}{3}\right)^{2n}
\]
are solutions to the second order linear differential equation
\[
20 y + 108z y'+ (36z^2-1) y'' = 0.
\]
Because the coefficients of $y$ and $y''$ are even functions of $z$ and the coefficient of $y'$ is an odd function, $U'(-z)/6$ is also a solution, so the average $\frac{1}{12}(U'(z)+U'(-z))$ and the given series are both solutions with vanishing first derivative at $0$. Then they agree up to a scalar multiple, which can be checked by hand to be $\frac{2}{\sqrt{3}}$.
\end{proof}

\bibliographystyle{amsplain}
\bibliography{mybibliography}
\end{document}